\def\R{\mathbb R}
\def\N{\mathbb N}
\def\E{\cal E}
\def\cal{\mathcal}
\renewcommand{\bar}{\overline}
\newcommand{\vc}[1]{{\color{red}#1}}
\def\H{\cal H}
\def\In{\mathcal{I}}
\def\Out{\mathcal{O}}
\def\<{\langle}
\def\>{\rangle}
\def\begeq{\begin{equation}}
\def\endeq{\end{equation}}
\def\begar{\begin{eqnarray}}
\def\endar{\end{eqnarray}}
\def\begar*{\begin{eqnarray*}}
\def\endar*{\end{eqnarray*}}
\def\begal{\begin{align}}
\def\endal{\end{align}}
\def\begal*{\begin{align*}}
\def\endal*{\end{align*}}
\newtheorem{Thm}{Theorem}
\newtheorem{Lem}[Thm]{Lemma}
\newtheorem{Cor}[Thm]{Corollary}
\newtheorem{Prop}[Thm]{Proposition}
\newtheorem{Problem}[Thm]{Problem}
\newtheorem{Claim}{Claim}
\numberwithin{equation}{section}
\numberwithin{Thm}{section}
\theoremstyle{definition}
\newtheorem{Def}[Thm]{Definition}
\newtheorem{Rk}[Thm]{Remark}
\theoremstyle{remark}
\newtheorem*{Thm*}{Theorem}
\newtheorem*{Lem*}{Lemma}
\newtheorem*{Conj*}{Conjecture}
\newtheorem*{Cor*}{Corollary}
\newtheorem*{Def*}{Definition}
\newtheorem*{Prop*}{Proposition}
\newtheorem*{Exo*}{Exercise}
\newtheorem*{Exs*}{Examples}
\newtheorem*{Ex*}{Example}
\newtheorem*{Rk*}{Remark}
\newtheorem*{Rks*}{Remarks}
\author{Vincent Calvez}
\address{Unit\'e de Math\'ematiques Pures et
Appliqu\'ees, Ecole Normale Sup\'erieure de Lyon, CNRS UMR 5669, and
 project-team Inria NUMED, Lyon, France. E-mail: {\tt
 vincent.calvez@ens-lyon.fr}}
\author{Thomas Gallou\"et}
\address{Project-team MEPHYSTO, Inria Lille - Nord Europe, Villeneuve d'Ascq, France. E-mail: {\tt
 thomas.gallouet@inria.fr}}
\begin{document}


\title[Particle approximation of the 1D Keller-Segel equation]{Particle approximation of the one dimensional Keller-Segel equation, stability and rigidity of the blow-up}


\begin{abstract}
We investigate a particle system which is a discrete and deterministic approximation of the one-dimensional Keller-Segel equation with a logarithmic potential. The particle system is derived from the gradient flow of the homogeneous free energy written in Lagrangian coordinates. We focus on the description of the blow-up of the particle system, namely: the number of particles involved in the first aggregate, and the limiting profile of the rescaled system. We exhibit basins of stability for which the number of particles is critical, and we prove a weak rigidity result concerning the rescaled dynamics. This work is complemented with a detailed analysis of the case where only three particles interact.
\end{abstract}

\maketitle
\section{Introduction and main results}

We investigate the numerical analysis of a deterministic particle approximation of the Keller-Segel equation that was introduced in \cite{BCC08}. We focus on the blow-up issue at the discrete level, when a cloud of particles merge together to form the first singular aggregate. We restrict to a one-dimensional version of the Keller-Segel equation which shares common features with the classical two-dimensional problem.

We take advantage of the one-dimensional structure to design a numerical particle scheme which possesses the same geometrical structure as the continuous problem. The strategy is as follows: it is well known that continuous systems of diffusive self-interacting particles possess a gradient flow structure with respect to the free energy \cite{AGSbook,JKO, otto01, oldandnew}. The euclidean distance between particles translates into the Wasserstein distance between distributions of particles in the space of probability measures. We proceed the other way around: we discretize the free energy in Lagrangian coordinates \cite{GosTos06}, then we consider the time continuous gradient flow with respect to the euclidean metric.

The free energy of the one-dimensional Keller-Segel equation with a logarithmic interaction kernel reads as follows in the Lagrangian coordinates:
\begin{equation} \label{eq:energy lagrangian}
E(X) = - \int_{(0,1)}  \log \left(\dfrac {dX}{dm}\right)\, dm + \chi \iint_{(0,1)^2} \log |X(m)- X(m')|\, dmdm'\, .
\end{equation}
Here, $X: (0,1)\to \R$ encodes the position of particles  with respect to the partial mass $m\in (0,1)$. We assume that $X$ belongs to the energy space $\mathcal X$:
\[\mathcal X = \left\{X\in L^2(0,1)\cap\mathcal C^1(0,1) \, : \; \dfrac {dX}{dm}\geq 0\, , \; \text{and}\;  E(X)\; \text{is finite}\right\}\, .\]
The first contribution in \eqref{eq:energy lagrangian} is the internal energy, which accounts for the diffusion of particles. The second contribution is the interaction energy which accounts for the self-attraction of particles.

It is worth noticing that the blow-up phenomenon in the supercritical case can be simply deduced from the logarithmic homogeneity of \eqref{eq:energy lagrangian} with respect to dilations $X_\lambda = \lambda X$. On the other hand, the global existence in the subcritical case is also a consequence of the energy structure \cite{BDP06}. Therefore if we discretize the problem in such a way to keep those two properties (logarithmic homogeneity and gradient flow structure) then we can ensure the critical mass phenomenon at the discrete level too.

The numerical scheme is thus designed as follows: (i) we discretize the density of particles into a finite number of particles, $N$ fixed throughout the paper, having equal masses $h_N = \frac1{N+1}$, at positions $(X_i)_{1\leq i\leq N}$ such that
$X_1<X_2<\dots<X_N$; (ii) we opt for a simple  discretization of \eqref{eq:energy lagrangian}:
\begin{equation}\label{EnergieKelSelDiscret}
\E\left(X\right)=-\sum_{i=1}^{N-1} \log\left(X_{i+1}-X_i\right) +\chi h_N\sum_{1\le i\neq j\le N}  \log \vert X_i-X_j \vert\,,
\end{equation}
(notice that we have omitted a factor $h_N$ in front of both contributions of \eqref{EnergieKelSelDiscret} for the sake of clarity);
(iii) we take the finite-dimensional euclidean gradient flow of $\E_N$.
This gives,\begin{equation}\label{flotgradientdiscretexplicite}
\dot X_i = -\dfrac{1}{X_{i+1}-X_i} +   \dfrac{1}{X_{i}-X_{i-1}} + \quad 2\chi
h_N \sum_{j \neq i } \dfrac{1}{X_j-X_i}\,,
\end{equation}
complemented with the dynamics of the extremal points
\begin{equation}\label{flotgradientdiscretexplicite-boundary}
\left\{\begin{array}{ccl}\dot X_1 =&-\dfrac{1}{X_{2}-X_1} &+ \quad 2\chi h_N\displaystyle\sum_{j \neq 1}
\dfrac{1}{X_j-X_1}\medskip\\
\dot X_N=&\dfrac{1}{X_{N}-X_{N-1} }&+ \quad 2\chi h_N \displaystyle\sum_{j \neq N}
\frac{1}{X_i-X_N}
\end{array}\right.
\end{equation}

The particle scheme \eqref{flotgradientdiscretexplicite}--\eqref{flotgradientdiscretexplicite-boundary} presents several advantages. First it is very similar to the two-dimensional Keller-Segel equation from the geometric viewpoint ({\em i.e.} the gradient flow of a homogeneous functional). Thus it captures accurately the critical mass phenomenon. Second the lagrangian viewpoint avoids truncature of a spatial domain, which is usually the case for finite volume schemes, see {\em e.g.} \cite{Fil06}. On the other hand we assume that extremal particles are not interacting with $\pm \infty$ \eqref{flotgradientdiscretexplicite-boundary}. Third, as blow-up occurs the scheme \eqref{flotgradientdiscretexplicite} dynamically adapt the "mesh" (from the eulerian viewpoint) to increase accuracy at the blow-up point since many particles converge towards it.

A stochastic particle approximation of the two-dimensional Keller-Segel equation has been extensively studied by
Ha\v{s}kovec and Schmeiser in a couple of papers \cite{HasSch09,schhas12}. The first paper is concerned with the design a numerical scheme which enables to follow {\em heavy} aggregates after the occurrence of blow-up in the spirit of \cite{Dschm09,LuSuVel12,Velazquez04a,Velazquez04b} (see also \cite[Chapter 7]{theseannedevys} for a similar work for \eqref{flotgradientdiscretexplicite}--\eqref{flotgradientdiscretexplicite-boundary}).
The second paper analyses the limit of a large number of particles. The author investigate the Boltzmann hierarchy obtained in the limit $N\to +\infty$, and prove its compatibility with the  measure-valued solutions {\em \`a la Poupaud} defined in \cite{Dschm09}. In addition they focus on the case of two interacting  particles only.
The main difference between \cite{HasSch09,schhas12} and our approach relies on the treatment of the diffusion term. Our approach is fully deterministic and transcripts the diffusion of particles into a pressure term that pushes apart neighbouring particles, as can be seen in \eqref{flotgradientdiscretexplicite}--\eqref{flotgradientdiscretexplicite-boundary}. We also refer to \cite{pascomprisdemander} for a deterministic approximation of the two-dimensional Keller-Segel equation in Lagrangian coordinates.

A challenging question in the analysis of the two-dimensional Keller-Segel equation consists in proving that the first blow-up set contains exactly the critical amount of mass. This question requires to understand very precisely the dynamics close to the blow-up time/point. This question was raised in \cite{ChPer81}. A constructive partial answer was given in \cite{HerVel97} in the radial case using formal matching asymptotics. It has been extensively studied in \cite{Suzukibook}. In \cite{KavSou08} the authors investigate the critical radially symmetric case, for which it is known that blow-up occurs in infinite time \cite{BCM08}. In a recent work \cite{Raphael-Schweyer}, the authors rigorously derive the blow-up dynamics obtained in \cite{HerVel97}. By using  very powerful techniques of critical blow-up problems developped for parabolic and dispersive equations, they are able to characterize very precisely the dynamics of blow-up close to the critical ground state $Q(x) = \frac{8}{(1 + |x|^2)^2}$. They prove that for initial data having supercritical mass and close to the ground state in some weighted $H^2$ norm, the solution blows-up with a universal blow-up rate and a universal profile given by a dilation of the ground state.

In the present work we address similar questions for the discrete problem \eqref{flotgradientdiscretexplicite}--\eqref{flotgradientdiscretexplicite-boundary}:
\begin{enumerate}[(i)]
\item prove that the blow-up of a critical amount of mass (here a critical number of particles) is a stable process,
\item investigate the dynamics close to the blow-up time/point in the stable regime.
\end{enumerate}
We relax several difficulties specific to the continuous setting. As a drawback we miss refined dynamics such as the logarithmic correction of the blow-up rate \cite{HerVel97, Raphael-Schweyer}. On the other hand our analysis does not rely on any perturbation analysis. Alternatively, the Lagrangian formulation is well suited to separate inner and outer contributions to the blow-up, as explained below. In addition we completely describe the case of three interacting particles. There it is clear that blowing-up with the critical amount of mass is a generic process. Only very peculiar symmetric cases break this structure.


\begin{figure}
\includegraphics[width=0.48\linewidth]{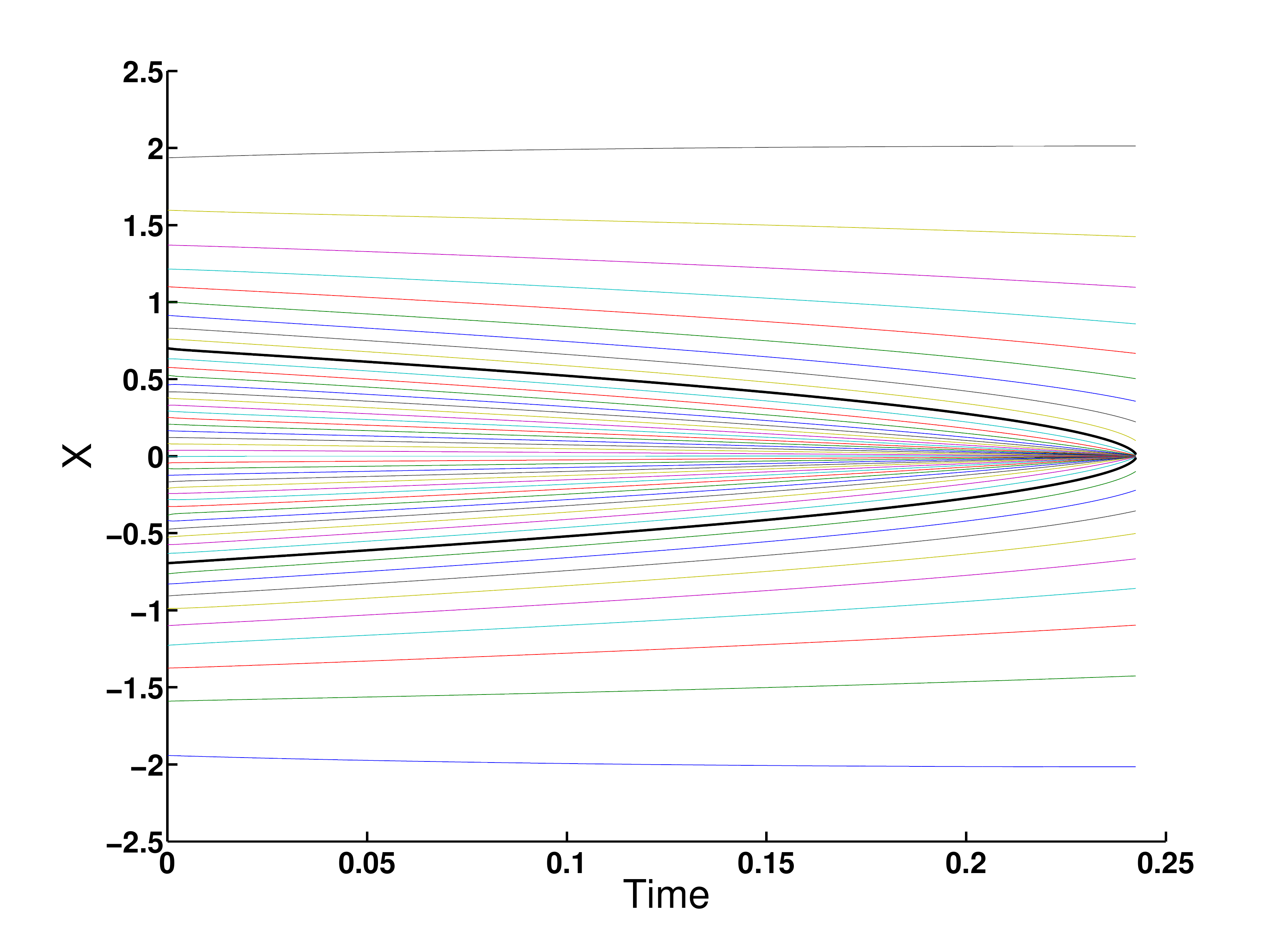}\quad
\includegraphics[width=0.48\linewidth]{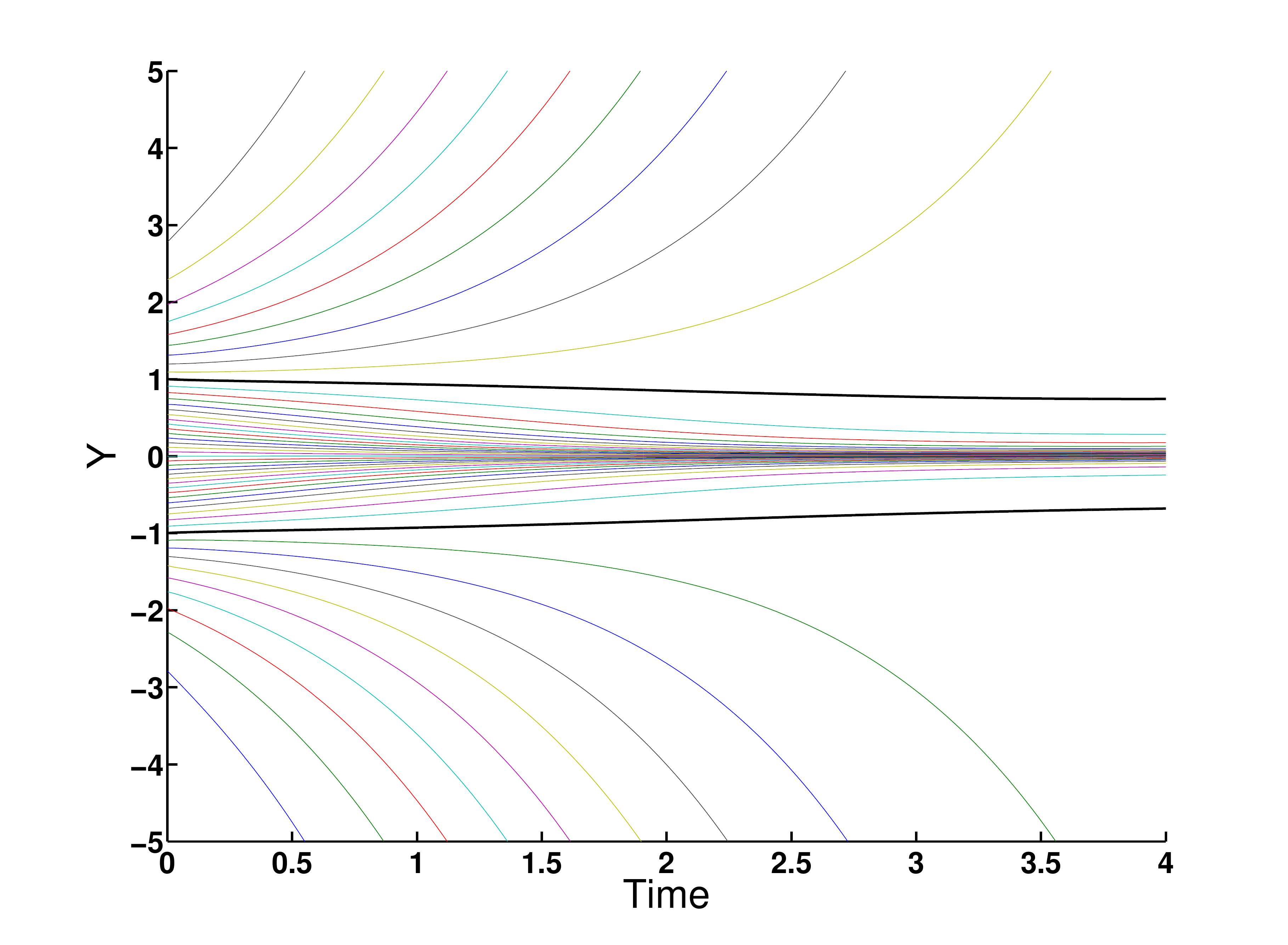}
\caption{Numerical simulations of the discrete gradient flow \eqref{flotgradientdiscretexplicite}--\eqref{flotgradientdiscretexplicite-boundary} with $N = 49$ particles. The minimal number of particles to form an aggregate is $k=31$.  The initial data is a random perturbation of a symmetric configuration. (Left) Blow-up occurs by merging the minimal number of particles. The blow-up time is approximately $T = 0.24$. (Right) After a parabolic rescaling, particles in the outer set are sent to infinity, and particles in the inner set converges towards a critical profile.}
\label{explosionkpamisnreg}
\end{figure}
In order to state our stability result we define a decreasing family of critical sensitivity parameters for $k = 1\dots N$,
\begin{equation}
\chi^k_N=\frac{N+1}{k}\,.
\end{equation}
It is not difficult to prove that for $\chi<\chi^k_N$, $k$ isolated particles cannot form a blow-up aggregate. Therefore it is natural to address the following problem.
\begin{Problem}[Discrete mass quantization problem.]\label{DmassQuantProblem}
Assume $\chi^k_N< \chi< \chi^{k-1}_N $. Does the first blow-up set contain exactly $k$ particles?
\end{Problem}
We shall see that in the case of three particles, answer to Problem \ref{DmassQuantProblem} is false in general for symmetry reasons. However we give below a positive answer to Problem \ref{DmassQuantProblem} for a reasonably large set of initial data.
\begin{Thm}[Stability]\label{Thm:stabilityintro}
Let $\chi^k_N< \chi< \chi^{k-1}_N $. There exists a set $D_{N,\chi}$ such that for any initial data $X(0) \in D_{N,\chi} $, the solution $X$ of \eqref{flotgradientdiscretexplicite}--\eqref{flotgradientdiscretexplicite-boundary} aggregates exactly $k$ particles when blow-up occurs. The set $D_{N,\chi}$ plays the role of a basin of stability.
\end{Thm}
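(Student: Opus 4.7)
The plan is to isolate a cluster of $k$ consecutive particles that is tight and well-separated from the other $N-k$ particles, and to show that this cluster evolves essentially as an isolated $k$-particle system. Since $\chi > \chi^k_N$, the restriction of $\E$ to the cluster is super-critically homogeneous, so a virial identity will force finite-time collapse, and since $\chi < \chi^{k-1}_N$ no strictly smaller subset can coalesce first. I define $D_{N,\chi}$ as the set of such well-prepared configurations and run a bootstrap argument showing that the initial gap between the cluster and the outer particles is preserved up to the collapse time.

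\textbf{Step 1 (virial for the cluster).} Fix $i_0$ and let $C = \{i_0,\ldots,i_0+k-1\}$, with centre of mass $\bar X_C$. The inner energy
\[
\E_C(X) = -\sum_{i=i_0}^{i_0+k-2} \log(X_{i+1}-X_i) + \chi h_N \sum_{\substack{i,j\in C\\ i\ne j}} \log|X_i-X_j|
\]
is translation invariant and scales as $\E_C(\bar X_C + \lambda(X - \bar X_C)) = \E_C(X) + \alpha_k \log\lambda$ with $\alpha_k = (k-1)(\chi h_N k - 1) > 0$ by assumption. Setting $Y_i = X_i - \bar X_C$, the Euler identity $\sum_{i\in C} Y_i \,\partial_{X_i} \E_C = \alpha_k$ combined with the full gradient flow $\dot X = -\nabla \E$ gives
\[
\tfrac12 \frac{d}{dt}\sum_{i \in C} Y_i^2 = -\alpha_k - \sum_{i \in C} Y_i\, \partial_{X_i}(\E - \E_C),
\]
in which the coupling term on the right is $O(\epsilon/R)$ as soon as the cluster has diameter at most $\epsilon$ and lies at distance at least $R$ from the outer particles.

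\textbf{Step 2 (basin and bootstrap).} I define $D_{N,\chi}$ by requiring some index $i_0$ with $X_{i_0+k-1}(0) - X_{i_0}(0) \le \epsilon$, the two gaps neighbouring $C$ of size at least $R$, and all other gaps at least $\delta$, for constants $\epsilon, R, \delta$ depending on $N,\chi$ and chosen to satisfy $\epsilon/R \ll \alpha_k$ and $\epsilon \ll \delta$. On the maximal interval $[0, T^*]$ on which the cluster has size $\le 2\epsilon$ and the separation from outers stays $\ge R/2$, Step 1 yields $\sum_{i \in C} Y_i^2(t) \le k\epsilon^2 - \alpha_k t$, so the cluster collapses at time $T_{\mathrm{coll}} \le k\epsilon^2/\alpha_k$. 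Meanwhile each outer particle has velocity bounded by some $C_0(N,\chi,R,\delta)$: the singular forces it receives from $C$ are $O(k/R)$, and the cluster as a whole drifts at bounded speed because the internal singular contributions cancel either by telescoping in the diffusion part or by antisymmetry in the interaction part, so both $\bar X_C$ and the extremal indices $X_{i_0},X_{i_0+k-1}$ move at bounded velocity. Choosing $\epsilon$ small enough that $C_0 T_{\mathrm{coll}} \ll \min(R,\delta)/2$ closes the bootstrap, forces $T^* = T_{\mathrm{coll}}$, and the $k$ particles of $C$ merge. Finally no strictly smaller subset can coalesce first: the same virial computation applied to a subset of size $k' < k$ produces $\alpha_{k'} = (k'-1)(\chi h_N k' - 1) < 0$ since $\chi < \chi^{k-1}_N \le \chi^{k'}_N$, hence its virial grows; and subsets of outer particles remain uniformly separated during $[0,T_{\mathrm{coll}}]$ thanks to the bounded-velocity estimate and the initial $\delta$-separation. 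Therefore the first aggregate consists of exactly the $k$ particles of $C$.

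\textbf{Main obstacle.} The crux is the bootstrap in Step 2: one must propagate the large separation between $C$ and the outer particles in spite of the singular attraction developing inside $C$. The essential saving is that the internal singularities cancel exactly in the motion of both the centre of mass of $C$ (by antisymmetry of the interaction) and of the extremal indices (by telescoping of the pressure terms), so that the cluster as a whole moves with bounded velocity. Turning this cancellation into a self-consistent bootstrap while keeping the coupling perturbation in Step 1 much smaller than $\alpha_k$ determines the precise scaling of $\epsilon$ relative to $R$ and $\delta$, and is the technical heart of the proof.
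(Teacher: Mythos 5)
Your Steps 1--2 reproduce the paper's core mechanism: the virial identity for the cluster is exactly the second-moment estimate \eqref{secondmomentdecroissant} of Lemma \ref{evolutionsecondmomentetH}, and the bootstrap "linear decay of the cluster variance on a timescale short enough that the cluster--outer coupling cannot deteriorate" is the proof of Theorem \ref{Thm:stability} (the paper controls the coupling through the single quantity $H_{\In\Out,2}$ and the Riccati inequality \eqref{Hcroissantdeux}, so it never needs hypotheses on outer--outer gaps nor velocity bounds; your basin, with the extra $\delta$-separation of outer particles, is smaller but that part of your argument is sound). The genuine gap is in your final step, "no strictly smaller subset can coalesce first." First, for a proper subset of the cluster $C$, the perturbation in its virial comes from the \emph{other cluster particles}, which sit at distance $O(\epsilon)$ (and shrinking), so the coupling term is not small and the sign of $\tfrac{d}{dt}\Pi^2_{\rm sub}$ is not controlled; to tame it you must invoke maximality of the putative blow-up set to get separation, and even then the virial only yields a \emph{lower bound on the variance} of the subset. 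Second, and more fundamentally, a variance bounded below does not contradict that subset being a (weak) blow-up set: blow-up is defined by $\liminf_{t\to T}(X_{i+1}-X_i)=0$ pairwise, which is compatible with the variance staying of order one (oscillations). Likewise, your virial for $C$ only shows the variance \emph{would} vanish by time $k\epsilon^2/\alpha_k$; the solution may cease to exist earlier with $\Pi^2_C(T^-)>0$, i.e.\ with only some internal gaps closing along subsequences. Ruling out these oscillatory scenarios is precisely what the paper's additional machinery does: Proposition \ref{blowupaumoinsk} (local energy grows at most linearly plus the discrete logarithmic HLS inequality \eqref{HLSdiscretp}, giving pointwise-in-time lower bounds on every gap of a separated sub-critical set, hence any blow-up set has at least $k$ particles), and Proposition \ref{prop:weakisstrong} via the induction Propositions \ref{inductionnonrescale}--\ref{induction} ("weak is strong" for $k$-particle sets), which together turn the dichotomy $\Pi^2_\In(T)=0$ versus $\Pi^2_\In(T)>0$ into the alternative of Theorem \ref{Thm:stability}. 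Without an ingredient of this type your argument does not yet yield "exactly $k$ particles aggregate."

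A secondary error: the claim that the extremal indices $X_{i_0},X_{i_0+k-1}$ move at bounded velocity "by telescoping" is false; telescoping and antisymmetry cancel the singular terms only after summing over the whole cluster, so only $\bar X_C$ has bounded speed, while individual cluster particles have velocities of order $(T-t)^{-1/2}$ near collapse. This is repairable, since the conclusion you actually need (the cluster--outer gap stays $\geq R/2$) follows from the bounded drift of $\bar X_C$ together with the variance bound $\max_{i\in C}|X_i-\bar X_C|\leq \bigl(\Pi^2_C\bigr)^{1/2}=O(\epsilon)$, which you already have from Step 1.
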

The stability set $D_{N,\chi}$ is defined in \eqref{def:stabilityset}. It is parametrized by some arbitrary $\epsilon>0$, measuring the isolation of some subset of particles initially (the {\em inner} set). Furthermore this parametrization enables to contract the isolated subset of particles such that we control the blow-up time.

We are not able to handle the transition cases, where $\chi = \chi_N^k$ for some $k = 1\dots N$, as discussed in the case of three particles.

A formal stability result was obtained in the two-dimensional case by Vel\' azquez \cite{Vel02}. The author showed that a small perturbation of the initial data  leads to the formation of a singularity which is close in time and location. A precise statement is contained in \cite{Raphael-Schweyer}.

Within the framework of Theorem \ref{Thm:stabilityintro} we are able to prove quantitative results about the blow-up dynamics.
We define the  rescaled free energy.
\begin{Def}[The local rescaled energy functional]\label{energiefonctionnellepartiellei}
Let $k = 1\dots N$, and $\In\subset[1,N]$ a blow-up set such that $|\In| = k$.
We define $\E^{\rm resc}_{k}$ by:
\begin{equation*}
\E^{\rm resc}_{k}\left( Y \right)=-\sum_{i \in \In \setminus\{\max \In\}  } \log \left(Y_{i+1}-Y_i\right) +\chi h_N \sum_{ \begin{subarray}{c}
(i, j)\in \In\times \In \\ i\neq j\end{subarray}}  \log \vert Y_i-Y_j \vert -\frac{\alpha}{2}  \sum_{i \in \In} \left| Y_i \right|^2\, ,
\end{equation*}
where $\alpha = \left(k-1\right)\left(\frac{\chi} {\chi^k_N} - 1\right)>0$.
\end{Def}
\begin{Thm}[Rigidity of the blow-up]\label{Thm:rigidityintro}
Let $X\in \R^n$ such that any blow-up set is made of $k$ particles. Let $\In$ be one of them, then there exists $e_\infty\in \R$
such that for any sequences $t_n \to +\infty$ there exists a subsequence $t_n'  \to +\infty$ and a critical point $Y_\infty$ of $\E^{\rm resc}_{k}$, having energy level $\E^{\rm resc}_{k}(Y_\infty) = e_\infty$,
\[
\left( \frac{X_i\left(t_n'\right)-\bar X}{\sqrt{2\alpha\left(T- t_n'\right)}} \right)_{i\in \In} \underset{n \to +\infty}{\longrightarrow} Y_{\infty} \in C_{\infty},
\]
where $T$ is the blow-up time, $\bar X$ the blow-up point and $\alpha=-\left(k-1\right)\left(1-\frac{\chi} {\chi^k_N}\right)$.
\end{Thm}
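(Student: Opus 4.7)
The plan is to apply the self-similar parabolic rescaling dictated by the logarithmic homogeneity of $\E$, recognize the rescaled dynamics on the inner particles as a gradient flow of $\E^{\rm resc}_k$ perturbed by vanishing remainders, and then run a LaSalle-type argument. The bulk of the work will lie in establishing compactness of the rescaled trajectory.

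\medskip

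\emph{Step 1: Self-similar rescaling.} For $i\in\In$ I set $Y_i(s)=(X_i(t)-\bar X)/\sigma(t)$ with $\sigma(t)=\sqrt{2\alpha(T-t)}$ and $ds/dt=\sigma^{-2}$, so that $\sigma(s)\sim e^{-\alpha s}$ as $s\to+\infty$. Plugging this ansatz into \eqref{flotgradientdiscretexplicite} and using that the outer particles $X_j(t)$, $j\notin\In$, keep a positive distance from $\bar X$ up to $t=T$, a chain-rule computation yields
\[
\frac{dY_i}{ds}=-\frac{\partial \E^{\rm resc}_k}{\partial Y_i}(Y)+R_i(s),\qquad i\in\In,
\]
where $R_i$ collects the boundary pressure terms $\sigma/(X_{i\pm 1}-X_i)$ (active only when $i=\min\In$ or $i=\max\In$) together with the cross-cluster interactions $2\chi h_N\sigma/(X_j-X_i)$ for $j\notin\In$. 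Both contributions are $O(\sigma(s))=O(e^{-\alpha s})$, hence in $L^2((s_0,+\infty))$.

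\medskip

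\emph{Step 2: Lyapunov identity.} Differentiating the rescaled energy along the trajectory gives
\[
\frac{d}{ds}\E^{\rm resc}_k(Y(s))=-|\dot Y(s)|^2+\langle R(s),\dot Y(s)\rangle.
\]
Once a lower bound for $\E^{\rm resc}_k(Y(s))$ on the trajectory is available (see Step 3), Young's inequality combined with $R\in L^2$ produces $\int_{s_0}^{+\infty}|\dot Y(s)|^2\,ds<+\infty$ and the existence of $e_\infty:=\lim_{s\to+\infty}\E^{\rm resc}_k(Y(s))\in\R$.

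\medskip

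\emph{Step 3: Compactness --- the main obstacle.} I need $Y(s)$ to remain in a fixed compact subset of $\{Y_{i_0}<\cdots<Y_{i_0+k-1}\}$. The non-collision bound $Y_{i+1}-Y_i\geq c>0$ is furnished by Step 2 through the logarithmic singularity of $\E^{\rm resc}_k$ at collisions. Boundedness of the cluster is the delicate point, because the quadratic piece $-\tfrac{\alpha}{2}|Y|^2$ of $\E^{\rm resc}_k$ has the ``wrong'' sign; here the resonant choice of $\alpha$ is crucial. A direct computation based on the symmetry identity $\sum_{i\ne j}(Y_i-Y_j)^{-1}=0$ and Abel summation on the pressure contribution yields
\[
\dot M_2(s)=2\alpha\bigl(M_2(s)-1\bigr)+o(1),\qquad \dot{\bar Y}(s)=\alpha\,\bar Y(s)+o(1),
\]
where $M_2$ and $\bar Y$ denote the variance and barycenter of the rescaled cluster. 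Since $\bar X$ is the blow-up location and $T$ the blow-up time, $X_i(t)\to\bar X$ for all $i\in\In$, which translates into $M_2(s)=o(e^{2\alpha s})$ and $\bar Y(s)=o(e^{\alpha s})$. Combined with the ODEs above, these constraints force $M_2(s)\to 1$ and $\bar Y(s)\to 0$. Together with the no-collision bound, this provides the required compactness and, in particular, the lower bound on $\E^{\rm resc}_k(Y(s))$ used in Step 2.

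\medskip

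\emph{Step 4: LaSalle and universality of the limiting energy.} Given any $t_n\to T$, let $s_n\to+\infty$ be the corresponding rescaled times. Compactness yields a subsequence $s_n'$ with $Y(s_n')\to Y_\infty$; the $L^2$ bound on $\dot Y$ from Step 2 combined with a standard oscillation argument produces a further subsequence along which $\dot Y(s_n')\to 0$. Passing to the limit in Step 1 gives $\nabla\E^{\rm resc}_k(Y_\infty)=0$, so that $Y_\infty$ is a critical point of $\E^{\rm resc}_k$. Finally, $\E^{\rm resc}_k(Y_\infty)=e_\infty$ follows from the existence of the limit in Step 2 and the continuity of $\E^{\rm resc}_k$, and this value is manifestly independent of the extracted subsequence.
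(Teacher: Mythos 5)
Your overall architecture (parabolic rescaling, recognizing the inner dynamics as a gradient flow of $\E^{\rm resc}_k$ with an $O(e^{-\alpha\tau})$ remainder, Lyapunov identity, LaSalle) is exactly the paper's (Proposition \ref{procheflotgradk} and Theorem \ref{kenergydecroit}), and your Step 3 variance/barycenter ODE argument reproduces Estimate 1 of Theorem \ref{estimationbornesuperieur}. The genuine gap is the rest of your Step 3, which is precisely where the paper puts all of its technical effort. First, as written Steps 2 and 3 are circular: the integrability of $|\dot Y|^2$ and the existence of $e_\infty$ need a lower bound on $\E^{\rm resc}_k$ along the trajectory, which needs compactness, while your non-collision bound is ``furnished by Step 2.'' (This particular loop is repairable: Young's inequality gives the one-sided bound $\frac{d}{ds}\E^{\rm resc}_k\le -\frac12|\dot Y|^2+\frac12|R|^2$, hence an upper bound on the energy without any compactness; but you do not say this.) Second, and more seriously, the claim that the upper energy bound prevents collisions ``through the logarithmic singularity of $\E^{\rm resc}_k$'' is false as stated: at a collision the repulsive term $-\sum\log(Y_{i+1}-Y_i)$ blows up to $+\infty$ but the attractive term $\chi h_N\sum\log|Y_i-Y_j|$ blows up to $-\infty$, and in the regime $\chi>\chi_N^k$ the energy is \emph{not} bounded below on collapsing configurations of the full cluster — that is the very mechanism of blow-up. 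What saves the day is that only \emph{strict} subclusters ($p\le k-1$ particles) can collapse in rescaled variables (total collapse is excluded by your variance bound), and for those the balance is subcritical; turning this into a uniform lower bound on the gaps requires either the discrete logarithmic HLS inequality \eqref{HLSdiscretp} applied to subclusters, or the paper's induction machinery (Propositions \ref{inductionnonrescale} and \ref{induction}, the descent and reinitialization steps built on Lemma \ref{evolutionsecondmomentetH} and Corollary \ref{evolutionsecondmomentetHrcorrolaire}), which is what proves Estimate 3b of Theorem \ref{estimationbornesuperieur}. Your proposal invokes neither.

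A related unproved assumption is ``$X_i(t)\to\bar X$ for all $i\in\In$'': for a \emph{weak} blow-up set only $\liminf$ of adjacent gaps vanishes, and ruling out oscillations (weak $\Rightarrow$ strong, Proposition \ref{prop:weakisstrong}) again rests on the same induction argument; your Step 3 uses $\Pi^2_\In\to0$, i.e. strong blow-up, so this cannot simply be taken for granted under the stated hypothesis. Finally, in Step 4 the passage from $\int|\dot Y|^2<\infty$ to $\dot Y(s_n')\to0$ along a \emph{prescribed} sequence needs an equicontinuity input (the paper proves $\|\ddot Y\|_{\ell^\infty(\In)}\le C$ for exactly this purpose); with compactness in hand this is routine, so it is a minor point compared with the missing gap lower bound.
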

It means that the blow-up profile involves only the $k$ particles contributing to the blow-up. In addition they all aggregate with the same parabolic rate, with an asymptotic profile up to extraction.

Theorem \ref{Thm:rigidityintro} is very much inspired by the analysis of blow-up for the nonlinear heat equation; see e.g. \cite{GK85,zaag,MS02} for classical references on this subject.

\bigskip

The paper is structured as follows: in section $2$ we explain how our problem is related to the classical Keller-Segel equation in dimension $2$. We recall some classical results regarding the blow-up phenomena. We define analogue quantities in the discrete setting. In section \ref{sec:introBU} we introduce and discuss the blow-up phenomena for the system \eqref{flotgradientdiscretexplicite}--\eqref{flotgradientdiscretexplicite-boundary}. We introduce some useful tools in Sections \ref{sec:secondmomentesti} and \ref{sec:induction}. Section \ref{sec:stability} is devoted to the stability issue and contains the proof of Theorem \ref{Thm:stabilityintro}. In section \ref{sec:rigidity} we prove Theorem \ref{Thm:rigidityintro}. Finally in section \ref{sec:conc} we discuss the perspectives of our work.

\section{Blow-up phenomena} \label{sec:introBU}
\subsection{Definitions}
For the system defined in \eqref{flotgradientdiscretexplicite}--\eqref{flotgradientdiscretexplicite-boundary} we give two definitions of the blow-up.
\begin{Def}[Blow-up of particles]\label{blowup}
Let $T\in\R_+^*$ and $X$ a solution of \eqref{flotgradientdiscretexplicite}--\eqref{flotgradientdiscretexplicite-boundary} defined on $[0,T)$. Let $\In \subset [1,N]$ a connected set of indices and $\Out =[1,N] \setminus  \In$. We say that $\In$ weakly blows up at time T if 
\begin{equation}\label{weakblowup}
\forall (i,i+1) \in \In \times \In \quad \liminf_{t\to T^- } \left( X_{i+1}-X_i \right) =0.
\end{equation}
We say that $\In$ strongly blows-up if there exist $c>0$ with 
\begin{equation}\label{strongblowup}
\begin{array}{lll}
&\forall (i,i+1) \in \In \times \In \quad  &\lim_{t\to T^- } \left(X_{i+1}-X_i\right)=0,\\
&\forall (i,j) \in \In \times \Out \quad  &\left|X_{i}-X_j\right|\geq \frac1c.
\end{array}
\end{equation}
In any case, when the set  $\In$ is maximal for the inclusion, 
we call it a blow-up set.
\end{Def}
Notice that a strong blow-up set is a weak blow-up set. The difference between both definitions is the possibility of oscillations for a weak blow-up set. It is not trivial to rule out this behaviour. In Proposition \ref{prop:weakisstrong} we show that a weak blow-up set made of $k$ particles is a strong blow-up set. 

It is natural to study the actions of dilations on the energy $\E$.
By the logarithmic homogeneity we have for $\lambda >0$  
\begin{equation}\label{Energiediscretm}
 \E\left(\lambda X\right)= \E\left(X\right) -\log\left(\lambda\right)\left[\left(N-1\right)-\chi h_N N\left(N-1\right)\right]. 
\end{equation}
We define accordingly the critical parameter $\chi_N$:
\begin{Def}[Critical parameter]
\[\chi_N= \frac{1}{h_NN}=1+\frac{1}{N}.\]
\end{Def}
The heuristics of \eqref{Energiediscretm} is the following: if $\chi < \chi_N$ then the energy is not bounded from below when $\lambda$ goes to infinity, which means a dilatation of the set of particles ($X_i$). 
It is the subcritical regime.
\smallskip
If $\chi > \chi_N$ the energy is not bounded from below when $\lambda$ goes to $0$ which corresponds to a contraction of the set of particles ($X_i$). 
Moreover the computation of the second moment $\Pi^2\left(X\right)=\sum^{N}_{i=1} X^2_{i}$ gives
\begin{equation}\label{secondmomentlevraidecroit}
\frac{1}{2}\frac{d}{dt} \Pi^2= \left<\dot X(t),X(t) \right> =\left<-\nabla \E(X(t)),X(t)\right> =\left(N-1\right)\left[1 - \chi h_NN \right]<0.
\end{equation}
The last equality is obtained by differentiation of \eqref{Energiediscretm} with respect to $\lambda$, at $\lambda=1$. 
Since $\Pi^2$ is positive this computation fails in finite time. It means that there exists $(T,i)\in \R^+\times [1,N]$ such that $\liminf_{t\to T^-} \big( X_{i+1}\left(T\right)- X_{i}\left(T\right)\big)=0$. The set $[i,i+1]$ weakly blows-up in finite time and there exists a maximal weak blow-up set.

Similarly we define the critical parameter ok $k$ adjacent particles bearing mass $h_n$: $$\chi_N^k=\frac{1}{h_Nk}.$$
We aim to give natural and robust conditions under which a maximal blow-up set $\In$ carries exactly the critical number of particles, that is $k$ such that 
$\chi^k_N < \chi < \chi^{k-1}_N $. 
 
The computation \eqref{secondmomentlevraidecroit} also proves the following claim. 
\begin{Claim}\label{claim:particulesbornees}
Let $\chi > \chi_N$ and $X$ be a solution of \eqref{flotgradientdiscretexplicite}--\eqref{flotgradientdiscretexplicite-boundary}. Then there exists a weak blow-up set and for any $i\in [1,N]$, $|X_i|\leq \sqrt{\Pi^2(0)}$.
\end{Claim}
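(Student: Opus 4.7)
The plan is to exploit the identity
\[
\tfrac{1}{2}\tfrac{d}{dt}\Pi^2(t) = (N-1)\bigl(1 - \chi h_N N\bigr)
\]
already derived in \eqref{secondmomentlevraidecroit}. Since we are in the regime $\chi > \chi_N = 1/(h_N N)$, the right-hand side is a strictly negative constant, hence $\Pi^2$ is an affine strictly decreasing function of $t$. In particular, for every $t$ in the interval of existence,
\[
\Pi^2(t) \leq \Pi^2(0),
\]
and the bound on each particle is immediate: $X_i(t)^2 \leq \sum_{j=1}^N X_j(t)^2 = \Pi^2(t) \leq \Pi^2(0)$, giving the pointwise estimate $|X_i(t)| \leq \sqrt{\Pi^2(0)}$.

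It remains to produce a weak blow-up set. The vector field in \eqref{flotgradientdiscretexplicite}--\eqref{flotgradientdiscretexplicite-boundary} is smooth on the open configuration space $\{X_1<X_2<\dots<X_N\}\subset\R^N$, so by the Cauchy--Lipschitz theorem the solution extends as long as it remains in a compact subset of this cone. Since the $X_i$ are uniformly bounded by $\sqrt{\Pi^2(0)}$, the solution cannot escape to infinity. Moreover, $\Pi^2$ is affine and strictly decreasing, so it cannot remain nonnegative forever: there is a first time $T>0$ at which the solution exits the open cone. The only way to exit while staying bounded is for at least one pair of adjacent particles to come arbitrarily close, i.e.\ there exists an index $i$ with
\[
\liminf_{t\to T^-}\bigl(X_{i+1}(t)-X_i(t)\bigr) = 0.
\]
Take the maximal connected set $\In$ of indices for which this property holds for all consecutive pairs; this is a weak blow-up set in the sense of Definition \ref{blowup}.

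The argument is essentially a one-liner consequence of the monotonicity of $\Pi^2$, and there is no real obstacle: the only point worth being careful about is the justification that the solution must actually exit the configuration cone in finite time through a collision (and not by some other mechanism), which is handled by combining the boundedness of the trajectory with the fact that $\Pi^2$, being affine with negative slope, must eventually violate its nonnegativity unless the ODE stops being defined.
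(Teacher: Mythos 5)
Your proof is correct and follows exactly the paper's route: the claim is stated as an immediate consequence of the second-moment identity \eqref{secondmomentlevraidecroit}, which gives the affine decay of $\Pi^2$ (hence the uniform bound $|X_i|\leq\sqrt{\Pi^2(0)}$) and forces the solution to leave the open cone $\{X_1<\dots<X_N\}$ in finite time through a collision of adjacent particles, producing a (maximal) weak blow-up set. Your only addition is to spell out the standard escape-from-compacts argument that the paper leaves implicit.
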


\subsection{A first look on the blow-up structure }
We take $\chi^k_N \leq \chi < \chi^{k-1}_N$. 
\begin{Prop}[Blow-up properties]\label{blowupaumoinsk}
A weak (strong $\Rightarrow$ weak) blow-up set $\In$ contains at least $k$ particles.
\end{Prop}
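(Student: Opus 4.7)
Plan:

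The natural strategy is to localize the global second-moment calculation \eqref{secondmomentlevraidecroit} to the particles of $\In$, so that the critical threshold for blow-up of the subsystem appears directly from the identity. For a maximal weak blow-up set $\In$ of size $n\ge 2$, I set
\[
\widetilde M_\In(t) = \tfrac12 \sum_{i \in \In}\bigl(X_i(t)-\bar X_\In(t)\bigr)^2, \qquad \bar X_\In(t) = \tfrac1n \sum_{i \in \In} X_i(t),
\]
and differentiate along the gradient flow \eqref{flotgradientdiscretexplicite}--\eqref{flotgradientdiscretexplicite-boundary}. Splitting interactions into $\In \times \In$ and $\In \times \Out$: the inner interaction pairs symmetrize to the explicit contribution $-\chi h_N\, n(n-1)$ by the same antisymmetric cancellation as in \eqref{secondmomentlevraidecroit}, while the inner nearest-neighbour pressure terms telescope by Abel summation to $n-1$. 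Two boundary ratios involving the gaps $X_{i_0}-X_{i_0-1}$ and $X_{i_0+n}-X_{i_0+n-1}$ appear, along with the cross interaction $\In\times\Out$. This yields
\[
\dot{\widetilde M}_\In(t) = (n-1)\bigl(1-\chi h_N n\bigr) + R(t).
\]

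The second step is to control the remainder $R$ using the maximality of $\In$. By the definition of weak blow-up applied to a maximal set, each nearest-neighbour gap separating $\In$ from $\Out$ has strictly positive $\liminf$ as $t\to T^-$, so there exist $\alpha>0$ and $t_\star<T$ with all such gaps $\ge \alpha$ on $(t_\star,T)$. Combined with the bound $|X_i|\le \sqrt{\Pi^2(0)}$ from Claim~\ref{claim:particulesbornees} and the Cauchy--Schwarz estimate $|X_i-\bar X_\In|\le \sqrt{2\widetilde M_\In}$, each summand in $R$ is dominated by $\sqrt{\widetilde M_\In}$, giving $|R(t)|\le C\sqrt{\widetilde M_\In(t)}$ on $(t_\star,T)$. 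Suppose now, for contradiction, that $n\le k-1$. The hypothesis $\chi<\chi^{k-1}_N$ gives $\chi h_N n\le \chi(k-1)/(N+1)<1$, so the leading coefficient $(n-1)(1-\chi h_N n)$ is strictly positive, and one finds $\varepsilon,c>0$ such that $\widetilde M_\In(t)\le \varepsilon$ implies $\dot{\widetilde M}_\In(t)\ge c$ on $(t_\star,T)$. By continuity, $\widetilde M_\In$ cannot re-enter $[0,\varepsilon/2]$ once it has exited, hence $\liminf_{t\to T^-}\widetilde M_\In(t)\ge \varepsilon/2>0$.

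The contradiction must come from the fact that weak blow-up of $\In$ forces $\liminf_{t\to T^-}\widetilde M_\In(t)=0$, and I expect this to be the principal difficulty. The weak blow-up definition only imposes $\liminf=0$ on each consecutive gap individually, whereas the vanishing of $\widetilde M_\In$ requires all such gaps to be simultaneously small. My plan is a compactness-and-reduction step: along a sequence $t_p\to T$ realizing $\liminf=0$ for some fixed gap, extract (using bounded positions) so that all particles converge to limits $X_i^\star$, and let $\In^\star\subseteq \In$ be the maximal connected cluster of indices sharing the same limit. Then $\widetilde M_{\In^\star}(t_p)\to 0$ while the gaps separating $\In^\star$ from its complement in $[1,N]$ are bounded below in a short window around each $t_p$, and $|\In^\star|\le n\le k-1$; applying the virial inequality to $\In^\star$ on these windows contradicts the vanishing, after a further extraction so that $\In^\star$ is constant along the subsequence. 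Threading this local-in-time control through the Gronwall-type argument, whose effective window may shrink with $p$, is the main technical hurdle; the rest is the routine virial computation above.
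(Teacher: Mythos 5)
Your virial set-up (the first two steps) is sound and is essentially the paper's Lemma~\ref{evolutionsecondmomentetH} specialized to $\In$: the inner pairs symmetrize to $-\chi h_N n(n-1)$, the pressure terms telescope to $n-1$, and maximality bounds the cross terms by $C\sqrt{\widetilde M_\In}$, so for $n\le k-1$ the variance cannot stay small. But this only yields $\liminf_{t\to T^-}\Pi^2_\In>0$, which does \emph{not} contradict the weak blow-up hypothesis: Definition~\ref{blowup} asks each consecutive gap to have $\liminf$ zero \emph{individually}, not simultaneously, so a positive variance is perfectly compatible with it. The whole burden of the proof therefore falls on your third step, and that step has a genuine gap. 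Your cluster extraction reintroduces the same obstruction one level down: if $\In^\star\subsetneq\In$, the gaps separating $\In^\star$ from the remaining particles of $\In$ are themselves consecutive gaps of the weak blow-up set $\In$, hence have $\liminf$ zero; the uniform separation needed to run the localized virial inequality (backwards in time, over a window whose length must dominate $\widetilde M_{\In^\star}(t_p)$) is only known at the sampling times $t_p$, and propagating it to a window requires bounds on the velocities of the boundary particles of $\In^\star$, which are unavailable precisely because neighbouring gaps inside $\In^\star$ are small (the pressure terms $1/(X_{i+1}-X_i)$ are unbounded there). So the ``main technical hurdle'' you flag is not a routine threading issue; it is the oscillation problem the paper explicitly singles out after Definition~\ref{blowup}, and your plan does not resolve it.

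The paper avoids this difficulty entirely by working with the localized free energy rather than the variance. For the subcritical cluster it considers $\E_p$ restricted to $\In$, uses the maximality-based separation from $\Out$ (valid on all of $[0,T)$) together with Young's inequality to show $\frac{d}{dt}\E_p\le -\tfrac12\|\nabla\E_p\|^2+A^2$, hence $\E_p(X(t))\le \E_p(0)+tA^2$, and then invokes the discrete logarithmic Hardy--Littlewood--Sobolev inequality \eqref{HLSdiscretp} with $\theta=\chi/\chi_N^p<1$ to convert this one-sided energy bound into a uniform positive lower bound on \emph{every} gap $X_{i+1}-X_i$, $i\in\In$. That contradicts the weak blow-up condition for even a single gap, so no simultaneity or anti-oscillation statement is needed. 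If you want to complete your route, you must either prove an analogue of that simultaneous lower bound (e.g.\ via the paper's induction machinery of Proposition~\ref{inductionnonrescale}, which is designed to control oscillations but is stated there for $k$-particle sets) or switch to the energy/log-HLS argument; as written, your proof does not close.
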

This is a discrete analogous to the usual statement: the mass contained in a blow-up point is at least critical \cite{LuSuVel12,SenSuz01,Suzukibook}
\begin{proof}
We consider a weak blow-up set made of $p\leq k-1$ particles: $X_l,...,X_{l+p-1}$, we note $\In=\left[l,l+p-1\right]$ and $\Out=\left[1,N\right]\setminus \left[l,l+p-1\right]$. 
Since $\In$ is a weak blow-up set, by maximality, we have $$\min \left(  \liminf_{t\to T^- } \left( X_l-X_{l-1}\right) ,\liminf_{t\to T^- } \left(X_{l+p}-X_{l+p-1}\right)\right)>0.$$ Therefore there exists $c>0$ such that for any $j \in \Out$, $i\in \In$ and $s\in [0,T)$,
\begin{equation}\label{courteportee}
 |X_j\left(s\right)-X_i\left(s\right)|\geq \frac1c.
\end{equation} 
Let us consider the local energy 
\[
\E_p \left( X \right)=-\sum_{i \in \In \setminus \lbrace l+p-1 \rbrace  } \log \left(X_{i+1}-X_i\right) +\chi h_N \sum_{ (i, j)\in \In \times \In \setminus \lbrace i \rbrace}  \log \vert X_i-X_j \vert .
\]
Notice that $\chi_N^p$ is the critical parameter for $\E_p$.
Thanks to \eqref{courteportee} and the Young inequality we find $A>0$ such that 
\begin{align*}
\frac{d}{dt} \E_p &= -\left< \nabla \E_p,(\nabla_i \E )_{i\in \In} \right>_{\In} =-\| \nabla \E_p\|^2_{\ell^2(\In)} -\chi h_N\left< \nabla \E_p,\sum_{k\in \Out}\frac{1}{X_k-X_i} \right>_{\In} \\
&\leq -\| \nabla \E_p\|^2_{\ell^2(\In)} +\chi h_N\| \nabla \E_p\|_{\ell^2(\In)} \|\sum_{k\in \Out}\frac{1}{X_k-X_i} \|_{\ell^2(\In)}  
 \leq -\frac{1}{2} \|\nabla \E_p\|^2+A^2,
\end{align*}
and therefore for any $t>0$:
\begin{equation}\label{ekdecroissantalmost}
\E_p \left( X(t) \right)\leq \E_p \left( 0 \right)+ tA^2.
\end{equation}
Adapting the proof of the discrete logarithmic Hardy-Littlewood-Sobolev inequality given in \cite[Prop. 4.2]{BCC08}, we easily show the "non-constant-mass discrete logarithmic Hardy-Littlewood-Sobolev inequality: for $p \in \N$:
\begin{equation}\label{HLSdiscretp}
-\sum_{i \in \In \setminus \lbrace l+p-1 \rbrace  } \log \left(X_{i+1}-X_i\right) + \frac{1}{p} \sum_{ (i, j)\in \In \times \In \setminus \lbrace i \rbrace}  \log \vert X_i-X_j \vert \geq 0.
\end{equation}
We define $\theta$ such that $\frac{1}{p}=\frac{\chi h_N}{\theta}$, observe that $\theta=\frac{\chi}{\chi_N^p}<1$ since $p\leq k-1$. 
Combining \eqref{HLSdiscretp} and \eqref{ekdecroissantalmost} we obtain for any $t\in[0,T)$:
\[
-\sum_{i \in \In \setminus \lbrace l+p-1 \rbrace  } \log \left(X_{i+1}-X_i\right) \leq \frac{\E_p(0)+TA^2}{1-\theta}.
\]
The second moment, $\sum^{N}_{i=1} X^2_{i}$, decreases. Taking $A$ larger if needed we can suppose $|X_i|\leq A$. We deduce that for any $i\in [l,l+p-2]$ and $t\in[0,T)$:
\[
 \left(X_{i+1}-X_i\right) \geq \min{\left(1,e^{-\frac{\E_p(0)-TA^2}{1-\theta}-(p-2)\log(2A)}\right)}.
\]
It is a contradiction with $\In$ being a weak blow-up set and proves Proposition \ref{blowupaumoinsk}.
\end{proof}     
\begin{Rk}
The discrete discrete logarithmic Hardy-Littlewood-Sobolev \eqref{HLSdiscretp} rewrites
\begin{equation*}
-(1-h_N)\sum_{i \in \In \setminus \lbrace l+p-1 \rbrace  }h_N \log \left(X_{i+1}-X_i\right) +\frac{1}{p/N} \sum_{ (i, j)\in \In \times \In \setminus \lbrace i \rbrace}  h_N^2 \log  \vert X_i-X_j \vert \geq (p-1) C_{h_N}.
\end{equation*}
By analogy with the classical logarithmic Hardy-Littlewood-Sobolev inequality, the parameter $1-h_N$ as to be considered as a dimension parameter, whereas the coefficient $p/N$ corresponds to the total mass. 
\end{Rk}

\section{Second moment and exterior potential estimates}\label{sec:secondmomentesti}
We fix $\chi^k_N< \chi < \chi^{k-1}_N$. In order to catch the structure of the discrete Keller-Segel equation we define three important quantities.
\begin{Def}\label{stanfarddevH}
Let $\In$ be a connected set of indices (the {\em inner set}), say 
$|\In| = p+1$, and $\Out = \left[1,N\right]\setminus \In$ (the {\em outer set}).  
We introduce the variance of the family $\left(X_{\In}\right) = \{X_l,...X_{l+p}\}$ is defined as follows 
\begin{equation}
\Pi_{\In}^2 = \sum_{i\in \In}\left( X_i-\bar X_{\In}\right)^2 \, ,\quad {\rm where} \quad \bar X_{\In} = \frac1{|\In| }  \sum_{i\in \In}  X_i\, .
\end{equation}
We also introduce the following variant: for a given $\bar X \in \R$, the squared distance to $\bar X$ is defined by
$
\bar \Pi_{\In}^2 = \sum_{i\in \In}\left( X_i-\bar X\right)^2\, .
$
In the sequel, the point $\bar X$ will denote the blow-up location of the inner set $\In$. The existence of $\bar X$ will be deduced from \eqref{secondmomentdecroissant} in Proposition \eqref{prop:weakisstronweak}.

The exterior interaction potential is: 
\[
H_{\In\Out,m} = \sum_{j\in \Out}\sum_{i\in \In}\frac{1}{\left( X_j-X_i\right)^m}\, .
\]
We will essentially use $H_{\In\Out,2}$ and $H_{\In\Out,4}$.
\end{Def}
\noindent We are able to close a system of inequalities controlling the growth of these quantities.
Next Lemma compare the dynamics of the whole system of particles with the isolated set  $\In$. The idea is to consider the 
interaction with the outer set as a perturbation of the stand alone dynamics.  
\begin{Lem}\label{evolutionsecondmomentetH}
The following estimates for the evolution of $\Pi^2_{\In} $, $\bar \Pi^2_{\In} $ and $H_{\In\Out,2}$ hold true,
\begin{align}\label{secondmomentdecroissant}
 \left|\frac12 \frac{d}{dt} \Pi^2_{\In} - p\left(1- \frac{\chi}{\chi_N^{p+1}}\right) \right| & \leq \,   \left(2+\frac{2\chi}{\sqrt{N}}\right)\sqrt{\Pi^2_{\In}
 H_{\In\Out,2}}\, , \\
\label{secondmomentdecroissantbar}
 \left|\frac12 \frac{d}{dt} \bar  \Pi^2_{\In} - p\left(1- \frac{\chi}{\chi_N^{p+1}}\right)\right|   &\leq \,  \left(2+\frac{2\chi}{\sqrt{N}}\right)\sqrt{\bar \Pi^2_{\In}
 H_{\In\Out,2}}\, , \\
\label{Hcroissantdeux}
\frac{d}{dt} H_{\In\Out,2} & \leq \,   C(\chi,N)   H_{\In\Out,2}^2 \, .
\end{align}
\end{Lem}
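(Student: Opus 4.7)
I would handle the three estimates in the order listed, since the arguments for $\Pi^2_\In$ and $\bar\Pi^2_\In$ share a common virial structure, while the bound on $H_{\In\Out,2}$ requires a separate algebraic manipulation.

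\textbf{Step 1 (virial identity).} For $i \in \In$, I split the dynamics into an ``intrinsic'' part and an ``outer'' perturbation:
\[
\dot X_i = -\partial_{X_i}\E_{p+1}(X_\In) + R_i,
\]
where $\E_{p+1}$ is the local energy of $\In$ (as in the proof of Proposition \ref{blowupaumoinsk}, with $|\In| = p+1$) and $R_i$ collects the two boundary diffusion terms $\pm\frac{1}{X_\bullet - X_\bullet}$ at the ends of $\In$ (when the neighbor lies in $\Out$) and the outer attraction $2\chi h_N\sum_{k\in\Out}\frac{1}{X_k - X_i}$. Applying the dilation $X_i \mapsto \bar X_\In + \lambda(X_i - \bar X_\In)$ for $i \in \In$ to $\E_{p+1}$ and using its logarithmic homogeneity yields $\E_{p+1}(\lambda\cdot) = \E_{p+1}(\cdot) - p(1 - \chi/\chi_N^{p+1})\log\lambda$; differentiating at $\lambda = 1$ gives the virial identity
\[
\sum_{i \in \In}(X_i - \bar X_\In)\bigl(-\partial_{X_i}\E_{p+1}\bigr) = p\Bigl(1 - \frac{\chi}{\chi_N^{p+1}}\Bigr).
\]

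\textbf{Step 2 (estimate \eqref{secondmomentdecroissant}).} Since $\sum_{i\in\In}(X_i - \bar X_\In) = 0$, a direct calculation gives
\[
\tfrac12\tfrac{d}{dt}\Pi^2_\In = p\bigl(1 - \chi/\chi_N^{p+1}\bigr) + \sum_{i \in \In}(X_i - \bar X_\In)R_i.
\]
The boundary diffusion contributes at most two terms of the form $(X_l - \bar X_\In)/(X_l - X_{l-1})$; each is bounded by $\sqrt{\Pi^2_\In}\sqrt{H_{\In\Out,2}}$, since the single inverse distance $\frac{1}{|X_l - X_{l-1}|}$ is one term in $H_{\In\Out,2}$, giving a total of $2\sqrt{\Pi^2_\In H_{\In\Out,2}}$. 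The outer attraction contribution is estimated by Cauchy--Schwarz in $(i,j)$:
\[
\Bigl|2\chi h_N\sum_{i\in\In}(X_i - \bar X_\In)\sum_{j\in\Out}\tfrac{1}{X_j - X_i}\Bigr| \le 2\chi h_N\sqrt{|\Out|}\sqrt{\Pi^2_\In\,H_{\In\Out,2}} \le \tfrac{2\chi}{\sqrt N}\sqrt{\Pi^2_\In H_{\In\Out,2}},
\]
using $h_N\sqrt{|\Out|}\leq\sqrt{N}/(N+1)\leq 1/\sqrt N$. Adding these yields \eqref{secondmomentdecroissant}.

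\textbf{Step 3 (estimate \eqref{secondmomentdecroissantbar}).} The argument is identical once one notes that $\E_{p+1}$ is translation invariant in the inner variables, so $\sum_{i\in\In}\partial_{X_i}\E_{p+1} = 0$. Writing $X_i - \bar X = (X_i - \bar X_\In) + (\bar X_\In - \bar X)$ in the virial expression, the translation-invariance kills the constant shift and the identity of Step 1 carries over verbatim. The same Cauchy--Schwarz bounds apply with $\bar\Pi^2_\In$ replacing $\Pi^2_\In$.

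\textbf{Step 4 (estimate \eqref{Hcroissantdeux}).} Differentiating gives
\[
\frac{d}{dt}H_{\In\Out,2} = -2\sum_{\substack{i\in\In\\ j\in\Out}}\frac{\dot X_j - \dot X_i}{(X_j - X_i)^3}.
\]
I would then expand $\dot X_j - \dot X_i$ via the explicit formula \eqref{flotgradientdiscretexplicite} and use the algebraic identity
\[
\frac{1}{X_m - X_j} - \frac{1}{X_m - X_i} = \frac{X_j - X_i}{(X_m - X_j)(X_m - X_i)},
\]
which produces an extra factor of $(X_j - X_i)$ that combines with the denominator to leave $(X_j - X_i)^{-2}$. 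Together with the ``diagonal'' contribution $-\tfrac{4\chi h_N}{(X_j - X_i)^4}$ (summing to $8\chi h_N H_{\In\Out,4}$), every resulting term is a product of two inverse-distance factors, at least one of which is of the form $1/|X_j - X_i|$ with $(i,j) \in \In\times\Out$, i.e.\ bounded by $\sqrt{H_{\In\Out,2}}$. Applying the elementary inequality $2|ab| \leq a^2 + b^2$ to pair up inverse distances and using $H_{\In\Out,4} \leq H_{\In\Out,2}^2$ absorbs everything into $C(\chi,N)H_{\In\Out,2}^2$.

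\textbf{Main obstacle.} The delicate step is Step 4: after expansion, the diffusion pieces of $\dot X_i$ involve intra-inner distances $1/(X_{i\pm 1} - X_i)$ that are not controlled by $H_{\In\Out,2}$ alone. These dangerous terms must cancel via the antisymmetry $\sum_{i\in\In}\tilde A_i = 0$, where $\tilde A_i$ groups the purely inner contributions (inner diffusion telescopes and inner attraction is antisymmetric in the summation indices); exploiting this cancellation, together with the algebraic rewriting above, is what allows the right-hand side to be expressed solely in terms of inner-outer distances and hence controlled by $H_{\In\Out,2}^2$.
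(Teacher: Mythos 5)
Your Steps 1--3 are correct and essentially coincide with the paper's argument for \eqref{secondmomentdecroissant}--\eqref{secondmomentdecroissantbar}: the virial identity you extract from the logarithmic homogeneity of the local energy $\E_{p+1}$ is exactly what the paper gets by telescoping the inner diffusion terms and symmetrizing the inner attraction (their $T_1=-p(p+1)/2$), and your Cauchy--Schwarz treatment of the two boundary terms and of the outer attraction reproduces the constant $2+\tfrac{2\chi}{\sqrt N}$; the remark that translation invariance of $\E_{p+1}$ handles the fixed center $\bar X$ is a clean way to get \eqref{secondmomentdecroissantbar}.

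The genuine gap is in Step 4. After your pairing, a generic attraction term is $-4\chi h_N\,(X_j-X_i)^{-2}(X_m-X_i)^{-1}(X_m-X_j)^{-1}$ with $i\in\In$, $j\in\Out$ and $m$ a third particle: if $m\in\In$ the factor $(X_m-X_i)^{-1}$ is an inner--inner inverse distance, and if $m\in\Out$ the factor $(X_m-X_j)^{-1}$ is an outer--outer inverse distance. Neither is controlled by $H_{\In\Out,2}$ (inner spacings tend to $0$ at blow-up while $H_{\In\Out,2}$ stays bounded, and nothing prevents outer particles from clustering among themselves), so your absorption step ``$2|ab|\le a^2+b^2$ plus $H_{\In\Out,4}\le H_{\In\Out,2}^2$'' cannot close: the square of the uncontrolled factor is not a term of $H_{\In\Out,2}^2$, and these contributions are \emph{not} bounded by $C(\chi,N)H_{\In\Out,2}^2$ term by term, so a cancellation or sign argument is unavoidable. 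The cancellation you invoke in your ``main obstacle'' paragraph is not the right one either: once each $\dot X_i$ is weighted by $(X_j-X_i)^{-3}$, the inner diffusion no longer telescopes and the inner (and outer) attraction is no longer antisymmetric, so ``$\sum_{i\in\In}\tilde A_i=0$'' is unavailable; the same problem occurs for the outer diffusion terms $1/(X_{j\pm1}-X_j)$ with both indices in $\Out$, which you do not mention. What the paper does instead is (a) a discrete integration by parts in the inner (resp.\ outer) index for the diffusion blocks, so that the singular spacing $1/(X_i-X_{i-1})$ (resp.\ $1/(X_j-X_{j-1})$) multiplies the difference $(X_j-X_i)^{-3}-(X_j-X_{i-1})^{-3}$, whose factorization $a^3-b^3=(a-b)(a^2+ab+b^2)$ cancels the singularity, the boundary terms being either nonpositive or of inner--outer type; and (b) a symmetrization-and-sign argument for the inner--inner attraction block (it is nonpositive and discarded) and a symmetrization within $\Out$ for the outer--outer block, which again kills the uncontrolled factor; only then do H\"older and $H_{\In\Out,4}\lesssim H_{\In\Out,2}^2$ apply. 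Your grouping could actually be rescued along the same lines: symmetrizing your cross term in the two inner (resp.\ two outer) indices gives $-4\chi h_N\sum (X_j-X_i)^{-2}(X_j-X_m)^{-2}\le 0$ (resp.\ $-4\chi h_N\sum (X_j-X_i)^{-2}(X_m-X_i)^{-2}\le 0$), so both dangerous attraction blocks can simply be dropped; but this symmetrization and the integration-by-parts treatment of the diffusion terms are precisely the missing ingredients in your write-up.
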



\subsection{Proof of Lemma \ref{evolutionsecondmomentetH}}
\begin{proof}[Proof of Lemma \ref{evolutionsecondmomentetH}]
We start with the evolution of $\Pi^2_{\In}$, recalling that $X$ satisfies the differential equation 
\eqref{flotgradientdiscretexplicite}--\eqref{flotgradientdiscretexplicite-boundary}.
\begin{align}
\frac12\frac{d}{dt} \Pi^2_{\In} 
\nonumber =  \sum_{i\in \In} \left[  -\frac{ X_i- \bar X_{\In}}{X_{i+1}-X_i}+\frac{ X_i- \bar X_{\In}}{X_{i}-X_{i-1}}  + 2\chi h_N \sum_{j\neq i} \frac{ X_i- \bar X_{\In}}{X_{j}-X_i}\right] 
 - \sum_{i\in \In}\left( X_i- \bar X_{\In}\right)\frac{d}{dt} \bar X_{\In}  \nonumber \\
\nonumber= \sum_{i=\In \setminus{\{l+p\}}} \left[-\frac{ X_i-X_{i+1}}{X_{i+1}-X_i}\right] -  \frac{ X_{l+p}- \bar X_{\In}}{X_{l+p+1}-X_{l+p}} + \frac{ X_l- \bar X_{\In}}{X_{l}-X_{l-1}}   + 2\chi h_N \sum_{i\in \In}  \left[\sum_{j\neq i} \frac{ X_i- \bar X_{\In}}{X_{j}-X_i}\right]  \nonumber\\
= p-  \frac{ X_{l+p}- \bar X_{\In}}{X_{l+p+1}-X_{l+p}} + \frac{ X_l- \bar X_{\In}}{X_{l}-X_{l-1}} + 2\chi h_N \sum_{i\in \In} \left[\sum_{j\neq i} \frac{ X_i- \bar X_{\In}}{X_{j}-X_i}\right]\, .  \label{test}
\end{align}
We used $\sum_{i \in \In } X_i=|\In| \bar X_{\In} $. We first look at the contraction term:
\begin{align*}
T &=\sum_{i \in \In } \left[\sum_{j\neq i} \frac{ X_i- \bar X_{\In}}{X_{j}-X_i}\right]=
\underbrace{\sum_{i \in \In } \left[\sum_{ j\in \In\setminus \lbrace i\rbrace} \frac{ X_i- \bar X_{\In}}{X_{j}-X_i}\right]}_{T_1}+
\underbrace{\sum_{i \in \In } \left[\sum_{j\in \Out} \frac{ X_i- \bar X_{\In}}{X_{j}-X_i}\right]}_{T_2}\\
\end{align*}
The Cauchy-Schwarz inequality on the last term implies
\begin{equation*}
T_2\leq \sqrt{|\Out|\Pi^2_{\In}H_{\In\Out,2}}.
\end{equation*}
A similar estimates holds for the two boundary terms in \eqref{test}.
Using the symmetry we simplify $T_1$:
\begin{align*}
T_1&= \sum_{i\in \In }\sum_{ j\in \In \setminus \lbrace i\rbrace} \left[ \frac{ X_i- \bar X_{\In}}{X_{j}-X_i}\right]
= \frac12\sum_{i\in \In }\sum_{ j\in \In\setminus \lbrace i\rbrace} \left[ \frac{ X_i- \bar X_{\In}}{X_{j}-X_i}\right]+\frac12\sum_{j\in \In }\sum_{ i\in \In\setminus \lbrace j\rbrace} \left[ \frac{ X_j- \bar X_{\In}}{X_{i}-X_j}\right]\\
&=\frac12 \sum_{(i,j)\in \In \times \In \setminus \lbrace i=j\rbrace}\frac{ X_j-  X_{i}}{X_{i}-X_j} 
=- \frac{p\left(p+1\right)}{2}.
\end{align*}
All in one we obtain
\begin{equation*}
2\chi h_NT\leq - 2\chi h_N\frac{p\left(p+1\right)}{2}+2\chi h_N\sqrt{N\Pi^2_{\In}H_{\In\Out,2}}.
\end{equation*}
Coming back to \eqref{test} we get
$$
\frac12\frac{d}{dt}  \Pi^2_{\In} \leq p\left(1- \frac{\chi}{\chi_N^{p+1}}\right) +\left(2+\frac{2\chi}{\sqrt{N}}\right)\sqrt{\Pi^2_{\In}H_{\In\Out,2}}.
$$
Similarly 
$$
\frac12\frac{d}{dt}  \Pi^2_{\In} \geq p\left(1- \frac{\chi}{\chi_N^{p+1}}\right) -\left(2+\frac{2\chi}{\sqrt{N}}\right)\sqrt{\Pi^2_{\In}H_{\In\Out,2}},
$$
The demonstration for $\bar \Pi^2_{\In} $ is exactly the same because $ \sum_{i\in \In}\left( X_i-\bar X\right)\frac{d}{dt} \bar X=0$ since $\bar X$ is constant.\\
We now look for the evolution of the time exterior potential $H_{\In\Out,2}$.
\begin{align*}
\frac{d}{dt} H_{\In\Out,2}&= -2 \underset{j \in \Out }{\sum}\underset{i \in \In }{\sum}\frac{1}{\left(X_j-X_i\right)^3}\Bigg[ \\
& \frac{1}{\left(X_{i+1}-X_i\right)}-\frac{1}{\left(X_i-X_{i-1}\right)}-\frac{1}{\left(X_{j+1}-X_j\right)}+\frac{1}{\left(X_{j}-X_{j-1}\right)}\\
&\left.+2\chi h_N \left( -\sum_{ k\neq i } \frac{1}{\left(X_{k}-X_{i}\right)}+\sum_{ k\neq j } \frac{1}{\left(X_{k}-X_{j}\right)} \right)  \right].
\end{align*}
We split the right hand side into four terms: 
\begin{equation}\label{estHdeux}
\frac{d}{dt} H_{\In\Out,2}=A+B+C+D,
\end{equation}
where
\begin{align*}
A&=-2\sum_{j \in \Out }\sum_{i \in \In }\frac{1}{\left(X_j-X_i\right)^3}\left[ \frac{1}{\left(X_{i+1}-X_i\right)}-\frac{1}{\left(X_{i}-X_{i-1}\right)}\right],\\
B&=-2\sum_{j \in \Out }\sum_{i \in \In }\frac{1}{\left(X_j-X_i\right)^3}\left[-\frac{1}{\left(X_{j+1}-X_j\right)}+\frac{1}{\left(X_{j}-X_{j-1}\right)}\right],\\
C&= 4\chi h_N\sum_{j \in \Out }\sum_{i \in \In }\frac{1}{\left(X_j-X_i\right)^3}\left[ \sum_{ k\neq i } \frac{1}{\left(X_{k}-X_{i}\right)}\right],\\
D&=-4\chi h_N\sum_{j \in \Out }\sum_{i \in \In }\frac{1}{\left(X_j-X_i\right)^3}\left[\sum_{ k\neq j }\frac{1}{\left(X_{k}-X_{j}\right)}\right].
\end{align*}
The strategy is to bound each term from above with $H_{\In\Out,4}$.\\
A discrete integration by parts on $B$ gives  
\begin{multline}
\label{B1} B=-2 \sum_{i \in \In }\sum_{j \in \Out  } \frac{1}{\left(X_{j}-X_{j-1}\right)}\left[\frac{1}{\left(X_j-X_i\right)^3}-\frac{1}{\left(X_{j-1}-X_i\right)^3}\right] 
\\ +2\sum_{i \in \In }\frac{1}{\left(X_{l}-X_{l-1}\right)}\frac{1}{\left(X_{l-1}-X_i\right)^3}-2\sum_{i \in \In }\frac{1}{\left(X_{l+p+1}-X_{l+p}\right)}\frac{1}{\left(X_{l+p+1}-X_i\right)^3}.
\end{multline}
Since $l-1<i<l+p+1 $, we have 
\[\left(X_{l}-X_{l-1}\right)\left(X_{l-1}-X_i\right) \leq 0 \mbox{ and } \left(X_{l+p+1}-X_{l+p}\right)\left(X_{l+p+1}-X_i\right)\geq 0.\]
Therefore the contributions of the boundary, \textit{i.e.} the two last terms in \eqref{B1}, are nonpositive and can be dismissed for the upper bound of $\frac d {dt} H_{IO,2}$.  
There remains to treat the first term  of the right hand side of \eqref{B1}.
In the following computation the summation over $i$ and $j$ is taken for $i \in \In$ and $j \in \Out$.
\begin{align*}
& -2\sum_{i,j} \frac{1}{\left(X_{j}-X_{j-1}\right)}\left[\frac{1}{\left(X_j-X_i\right)^3}-\frac{1}{\left(X_{j-1}-X_i\right)^3}\right]\\
 &={2\sum_{i,j} \left[\frac{\left(X_{j-1}-X_i\right)^2+\left(X_j-X_i\right)^2+\left(X_{j-1}-X_i\right)\left(X_j-X_i\right)}{\left(X_j-X_i\right)^3\left(X_{j-1}-X_i\right)^3}\right]}\\
 &= {2\sum_{i,j}
\left[\frac{1}{\left(X_j\hspace{-.5mm}-X_i\right)^3\left(X_{j-1}\hspace{-1mm}-X_i\right)}+ \frac{1}{\left(X_j-X_i\right)\left(X_{j-1}\hspace{-.5mm}-\hspace{-.5mm}X_i\right)^3}+\frac{1}{\left(X_j\hspace{-.5mm}-X_i\hspace{-.5mm}\right)^2\left(X_{j-1}\hspace{-.5mm}-\hspace{-.5mm}X_i\right)^2}\right]}.
\end{align*}
This contribution is always positive. 
The H\"older inequality applied on each of the three terms, with coefficient $(4/3,4)$, $ (4,4/3)$ and $(2,2)$ gives
\[
2\sum_{i \in \In }\sum_{j \in \Out  } \frac{\left(X_{j-1}-X_i\right)^2+\left(X_j-X_i\right)^2+\left(X_{j-1}-X_i\right)\left(X_j-X_i\right)}{\left(X_j-X_i\right)^3\left(X_{j-1}-X_i\right)^3}
\leq 6 H_{\In\Out,4}.
\]
Coming back to $B$ we get 
\begin{equation}\label{ineB}
B\leq 6 H_{\In\Out,4}.
\end{equation}

A discrete integration by parts on $A$ gives a result similar to $B$ except for the sign of the boundary terms.
\begin{multline}
 A= -2 \sum_{i,i-1 \in \In }\sum_{j \in \Out  } \frac{1}{\left(X_{i}-X_{i-1}\right)}\left[\frac{1}{\left(X_j-X_{i-1}\right)^3}-\frac{1}{\left(X_{j}-X_i\right)^3}\right]\\
 +2\sum_{j \in \Out }\frac{1}{\left(X_{l}-X_{l-1}\right)}\frac{1}{\left(X_{j}-X_l\right)^3}-2\sum_{j \in \Out }\frac{1}{\left(X_{l+p+1}-X_{l+p}\right)}\frac{1}{\left(X_{j}-X_{l+p}\right)^3}.\label{A2}
\end{multline}
The boundary terms, \textit{i.e.} the last two terms of the right hand side of  \eqref{A2}, have no sign. 
Since $j\in \Out$ and $l+p \in \In $, the H\"older inequality applied to the last term of \eqref{A2} with coefficient $q=4$ and $q'=4/3$ implies
\[
2\left|\sum_{j \in \Out }\frac{1}{\left(X_{l+p+1}-X_{l+p}\right)}\frac{1}{\left(X_j-X_{l+p+1}\right)^3}\right|\leq 2N^{1/4} H_{\In\Out,4}.
\]
Similarly,  the second term of the r.h.s. of \eqref{A2} satisfies
\[
\left|2\sum_{j \in \Out }\frac{1}{\left(X_{l}-X_{l-1}\right)}\frac{1}{\left(X_j-X_{l}\right)^3}\right|\leq  2N^{1/4}H_{\In\Out,4}.
\] 
There remains to deal with the first term of  the right hand side of \eqref{A2}, the core of the integration by parts. 
We follow the proof done for $B$ to avoid the singularity and get

\begin{equation}\label{ineA}
A\leq \left(6+4N^{1/4}\right) H_{\In\Out,4}.
\end{equation}

Concerning $D$ we have 
\begin{align*}
D
&=\underbrace{ \sum_{j \in \Out }\sum_{i \in \In } \sum_{k\in \In }\frac{-4\chi h_N }{\left(X_{k}-X_{j}\right)\left(X_j-X_i\right)^3}}_{D_1}+\underbrace{\sum_{j \in \Out } \sum_{i \in \In } \sum_{ k\in \Out \setminus \lbrace j \rbrace }\frac{-4\chi h_N }{\left(X_{k}-X_{j}\right)\left(X_j-X_i\right)^3}}_{D_2}\\
\end{align*}
Since $j\in \Out$ and $i,k\in \In$, the contribution of $D_1$ is positive. The H\"older inequality with $p=4$, $q=4/3$ gives:
\begin{align*}
D_1
&\leq4\chi h_N \left[\sum_{j \in \Out }\sum_{i \in \In }\sum_{k\in \In }\frac{1}{\left(X_{k}-X_{j}\right)^4}\right]^{1/4} \left[\sum_{j \in \Out }  \sum_{i \in \In }\sum_{k\in \In } \frac{1}{\left(X_j-X_i\right)^4}\right]^{3/4} \\
&\leq 4\chi h_N N^{1/4}\left(H_{\In\Out,4}\right)^{1/4}N^{3/4}\left(H_{\In\Out,4}\right)^{3/4}
\leq 4\chi H_{\In\Out,4}.
\end{align*}
For $D_2$ 
we use the symmetric roles of $j$ and $k$.
\begin{align*}
D_2
&= -2\chi h_N\sum_{j \in \Out }\sum_{i \in \In }\sum_{ k\in \Out \setminus \lbrace j \rbrace }
\left[\frac{1}{\left(X_{k}-X_{j}\right)\left(X_j-X_i\right)^3}-\frac{1}{\left(X_{k}-X_{j}\right)\left(X_k-X_i\right)^3}\right]\\
&= -2\chi h_N\sum_{j \in \Out }\sum_{i \in \In }\sum_{ k\in \Out \setminus \lbrace j \rbrace }
\left[\frac{1}{\left(X_j-X_i\right)^3\left(X_k-X_i\right)}+\frac{1}{\left(X_j-X_i\right)\left(X_k-X_i\right)^3}
\right. \\ & \left.  \qquad \qquad   \qquad \qquad  \qquad \qquad  \qquad \qquad  \qquad \qquad  \qquad \qquad +\frac{1}{\left(X_j-X_i\right)^2\left(X_k-X_i\right)^2}\right].
\end{align*} 
We see that this contribution is negative when $j,k\geq i$ or $j,k\leq i$, positive elsewhere. We estimate it in all cases with an 
H\"older estimate on each of the three terms. The parameters are respectively $(q,q')=(4/3,4)$ then $(q,q')=(4,4/3)$ and $(q,q')=(2,2)$. It gives
\begin{equation}
D_2 \leq 6 \chi h_N N H_{\In\Out,4} \leq 6 \chi H_{\In\Out,4}.
\end{equation} 
Getting back to $D$ we find
\begin{equation}\label{ineD}
D \leq 10 \chi H_{\In\Out,4}.
\end{equation}
In a similar way we estimate $C$.
\begin{align*}
C
&=\underbrace{4\chi h_N\sum_{j \in \Out }\sum_{i \in \In }\sum_{ k\in \In \setminus \lbrace i \rbrace } \frac{1}{\left(X_j-X_i\right)^3}\frac{1}{\left(X_{k}-X_{i}\right)}}_{C_1}+
 \underbrace{ 4\chi h_N\sum_{j \in \Out }\sum_{i \in \In }\sum_{ k\in \Out } \frac{1}{\left(X_j-X_i\right)^3}\frac{1}{\left(X_{k}-X_{i}\right)}}_{C_2}.
\end{align*}
An H\"older inequality with parameters $(q,q')=(4/3,4)$ gives $C_2 \leq 4\chi H_{\In\Out,4}$. Using the symmetric role of $i$ and $k$ we find that the contribution of 
$C_1$ is positive and can be dismissed. It implies 
\begin{equation}\label{ineC}
C\leq 4 \chi H_{\In\Out,4}.
\end{equation}
Together \eqref{ineB}, \eqref{ineA}, \eqref{ineD} and \eqref{ineC} in \eqref{estHdeux} implies
\begin{align}
\frac{d}{dt} H_{\In\Out,2}  &\leq \left(12+14\chi +4N^{1/4} \right) H_{\In\Out,4}\\
&\leq C_{4,2}(N)\left(12+14\chi +4N^{1/4} \right)   H_{\In\Out,2}^2,
\end{align} 
where $C_{4,2}(N)$ is the sharpest constant such that $\|\cdot \|_4\leq C_{4,2}(N)  \| \cdot \|_2$, which we know exists since we consider a finite dimensional system.
\end{proof}

\subsection{Precision on the blow-up structure}
A first application of Lemma \ref{evolutionsecondmomentetH} is the following Proposition.
\begin{Prop}\label{prop:weakisstronweak}
Let $\In$ a weak blow-up set. We denote $l$ (resp. $r$) the smallest (resp. largest) indices of $\In$ and $T$ the blow-up time.
\begin{itemize}
\item The variance of $\In$, $\Pi^2_{\In}$, converges as $t$ goes to $T$. We denote by $\Pi$ its limit.
\item The inner mean value of $\In$: $\bar X_{\In} = \frac1{|\In| }  \sum_{i\in \In}  X_i$, converges as $t$ goes to $T$. We denote by $\bar X$ its limit.
\item $\In$ is a strong blow-up set if and only if $\lim_{t\to T^- } \Pi_{\In}^2 =0.$
\end{itemize}
\end{Prop}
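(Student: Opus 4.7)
The plan is to derive all three statements from Lemma \ref{evolutionsecondmomentetH}, once we establish that the exterior potential $H_{\In\Out,2}$ remains bounded on $[0,T)$. The crucial preliminary step I would carry out first is to upgrade the maximality of $\In=[l,r]$ into a uniform positive lower bound on inner/outer distances. By maximality of $\In$ as a weak blow-up set, the indices $l-1$ and $r+1$ cannot belong to any weak blow-up extension, so necessarily $\liminf_{t\to T^-}(X_l-X_{l-1})>0$ and $\liminf_{t\to T^-}(X_{r+1}-X_r)>0$ (with obvious modifications if $l=1$ or $r=N$). Combining these with continuity of the trajectory on compact subintervals of $[0,T)$ yields a uniform constant $c>0$ with $X_l(t)-X_{l-1}(t)\geq c$ and $X_{r+1}(t)-X_r(t)\geq c$ on all of $[0,T)$. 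Since $X_1<\cdots<X_N$, the same bound propagates to $|X_j(t)-X_i(t)|\geq c$ for every $i\in\In$, $j\in\Out$, and hence $H_{\In\Out,2}(t)\leq N^2/c^2$ throughout $[0,T)$.

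With this uniform bound, Claim \ref{claim:particulesbornees} gives $\Pi^2_{\In}\leq 4|\In|\,\Pi^2(0)$, so the right-hand side of \eqref{secondmomentdecroissant} is uniformly bounded on $[0,T)$. Hence $|\tfrac{d}{dt}\Pi^2_{\In}|\leq K$ for some constant $K$, so $\Pi^2_{\In}$ is Lipschitz, satisfies the Cauchy criterion as $t\to T^-$, and converges to a limit $\Pi$. For the inner mean $\bar X_{\In}$, I would compute $\tfrac{d}{dt}\bar X_{\In}=|\In|^{-1}\sum_{i\in\In}\dot X_i$ directly from \eqref{flotgradientdiscretexplicite}: the inner diffusion terms telescope to the two boundary contributions $-1/(X_{r+1}-X_r)+1/(X_l-X_{l-1})$, the inner/inner interactions vanish by antisymmetry of $(X_j-X_i)^{-1}$, and the inner/outer interactions are controlled by $c$. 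So $\bar X_{\In}$ is also Lipschitz and converges to a limit $\bar X$.

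The third bullet then follows by unwinding the definitions using the first two. If $\Pi^2_{\In}(t)\to 0$, each $(X_i(t)-\bar X_{\In}(t))^2$ tends to zero, so every $X_i$ with $i\in\In$ converges to $\bar X$, forcing $X_{i+1}-X_i\to 0$ for every consecutive pair in $\In$; combined with the uniform inner/outer bound $c$ this is precisely strong blow-up. Conversely, if $\In$ strongly blows up, then $X_{i+1}-X_i\to 0$ on consecutive pairs; telescoping gives $X_r-X_l\to 0$, all inner particles share a common limit which must coincide with $\bar X$, and $\Pi^2_{\In}\to 0$.

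The main obstacle in this approach is really the very first step. The definition of weak blow-up only records $\liminf$ information at $T$, and to make Lemma \ref{evolutionsecondmomentetH} usable one must first promote this to a uniform-in-time lower bound on inner/outer distances on the whole of $[0,T)$. Once that bound is in hand, the remainder of the argument reduces to standard \emph{bounded derivative $\Rightarrow$ existence of the limit at the endpoint} reasoning applied to $\Pi^2_{\In}$ and $\bar X_{\In}$, followed by an elementary chain of implications for the strong/weak equivalence.
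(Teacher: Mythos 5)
Your proposal is correct and follows essentially the same route as the paper: maximality gives a uniform lower bound on the inner/outer distances, hence $H_{\In\Out,2}$ is bounded, so \eqref{secondmomentdecroissant} (together with Claim \ref{claim:particulesbornees}) and a direct computation of $\tfrac{d}{dt}\bar X_{\In}$ give bounded derivatives and convergence on the finite interval $[0,T)$, while the last bullet is the same telescoping/convexity comparison between $\Pi^2_{\In}$ and the consecutive gaps. The only difference is cosmetic: you spell out the promotion of the $\liminf$ information to a uniform-in-time bound and phrase the equivalence via limits of individual particles rather than the paper's explicit chain of inequalities.
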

\begin{proof} Notice that the maximality property of a weak blow-up set implies that 
there exists $c>0$ such that for any $t>0$, $|X_l-X_{l-1}|,|X_{r+1}-X_r|\geq \frac1c$. Therefore $H_{\In\Out,2} \leq  N^2c^2$.

$\bullet$ For the first assertion, notice that Claim \ref{claim:particulesbornees} implies that $\Pi_{\In}^2$ bounded. Together with $H_{\In\Out,2}\leq N^2c^2$ and estimation \eqref{secondmomentdecroissant} we deduce that $\left|\frac12 \frac{d}{dt} \Pi^2_{\In} \right|$ is bounded, since $T$ is finite it concludes the proof. 

$\bullet$ For the second assertion a simple computation of $\frac{d}{dt} \bar X_{\In} $ combined with $H_{\In\Out,2}\leq N^2c^2$ shows that $\left|\frac{d}{dt} \bar X_{\In}\right|$ is bounded by $(2+N)Nc$. Since $T$ is finite it gives the existence of $\bar X$.

$\bullet$ The third assertion is obtained by convexity:
\[
\Pi_{\In}^2\leq N |X_r-X_l|^2\leq N (N-1)\sum_{i\in \In \setminus \{r\} }|X_{i+1}-X_i|^2\leq 2N (N-1) \Pi_{\In}^2.
\]
\end{proof}

This theorem is the first step to control the oscillations of a weak blow-up set. We then give the equivalents of Definition \ref{stanfarddevH} and Proposition \ref{evolutionsecondmomentetH} for the rescaled system.

\begin{Def}\label{def:solutionrescaledone}
Let $X$ be a solution of \eqref{flotgradientdiscretexplicite}--\eqref{flotgradientdiscretexplicite-boundary}, $T$ the blow-up time, 
$\In$ a weak blow-up set 
and $\bar X $ the blow-up point. For any $i \in [1,N]$ we define $$Y_i\left( \tau(t) \right)= \frac{X_{i}\left(t\right)-\bar X }{\sqrt{2\alpha\left(T-t\right)}},$$ where $\tau\left(t\right)=-\frac{1}{\alpha}\log\left(\frac{R\left(t\right)}{R\left(0\right)}\right)$ and  $R\left(t\right)=\sqrt{2\alpha\left(T- t\right)}$.
The particle system \eqref{flotgradientdiscretexplicite}--\eqref{flotgradientdiscretexplicite-boundary} rewrites in rescaled variables as follows,
\begin{equation*}
\begin{array}{lll}
\dot Y\left(\tau \right) = -\nabla \E^{\rm resc} \left(Y\left(\tau \right)\right), &
 Y\left(0\right)= Y^0 &.
\end{array}
\end{equation*}
where
\begin{equation*}
\E^{\rm resc}\left(Y\right)=-\sum_{i=1}^{N-1} \log\left(Y_{i+1}-Y_i\right) +\chi h_N\sum_{1\le i\neq j\le N}  \log \vert Y_i-Y_j \vert -\frac{\alpha}{2}|Y|^2.
\end{equation*}

For any $q, p\in  \In$, $q<p$ we define the average $\bar Y_{q,p}=\frac{1}{p-q+1}\sum_{i=q}^p Y_i$, the pseudo inner set $\In_{q,p}=\left[q,p\right]$, the pseudo exterior set $\Out_{q,p}=\left[1,N\right]\setminus \left[q,p\right] $ and the corresponding variance and exterior squared potential by
$$
P^2_{q,p}=
\sum_{i\in \In_{q,p}}\left( Y_i -\bar Y_{q,p}\right)^2.
$$
$$
\H_{q,p,2}=
\sum_{j\in\Out_{q,p} } \sum_{i\in \In_{q,p} }\frac{1}{\left(Y_j-Y_i\right)^2}
$$
For $q=l$, $p=l+k-1$ we denote $P^2_{q,p} = P^2_{\In}$ and $\H_{q,p,2}=\H_{\In \Out,2}$.
\end{Def}
\begin{Lem}\label{evolutionsecondmomentetHr}
Under the assumptions of Definition \ref{def:solutionrescaledone}. Let $\alpha_{q,p}=\left(p-q\right)\left(1-\chi \frac{p-q+1}{N+1}\right)=\left(p-q\right)\left(1-\frac{\chi }{\chi^{p-q+1}_{N}}\right)$, we have 
\begin{equation}\label{secondmomentdecroissantr}
\left| \frac12 \frac{d}{d \tau} P^2_{q,p} -  \alpha_{q,p}-\alpha P^2_{q,p} \right| \leq \left(2+\frac{2\chi}{\sqrt{N}}\right)\sqrt{P^2_{q,p}\H_{q,p,2}}.
\end{equation}
\end{Lem}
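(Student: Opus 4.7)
The strategy is to reduce the computation to the one already carried out for \eqref{secondmomentdecroissant} in Lemma \ref{evolutionsecondmomentetH}, treating the rescaled system $\dot Y = -\nabla \E^{\rm resc}(Y)$ as the original gradient flow plus a linear confining term.

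First I would verify the rescaled equation at the level of individual derivatives. Writing $X_i(t) = R(t)Y_i(\tau(t)) + \bar X$ with $R(t)=\sqrt{2\alpha(T-t)}$, a direct chain-rule computation yields $R'(t) = -\alpha/R(t)$ and $\tau'(t) = 1/R(t)^2$. Combined with $\nabla_{X_i}\E(X) = R(t)^{-1}\nabla_{Y_i}\E(Y)$ (since both nonlocal and local contributions to $\E$ are $0$-homogeneous in differences), this gives $Y_i' = R(t)\dot X_i + \alpha Y_i = -\nabla_{Y_i}\E(Y) + \alpha Y_i = -\nabla_{Y_i}\E^{\rm resc}(Y)$, justifying the claimed rescaled dynamics.

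Next I would differentiate $P^2_{q,p}$. Since $\sum_{i\in\In_{q,p}}(Y_i - \bar Y_{q,p})=0$, the mean value drops out and
\[
\tfrac{1}{2}\tfrac{d}{d\tau}P^2_{q,p} = \sum_{i\in \In_{q,p}} (Y_i - \bar Y_{q,p})\,Y_i'(\tau) = \sum_{i\in \In_{q,p}} (Y_i - \bar Y_{q,p})\bigl[-\partial_{Y_i}\E(Y) + \alpha Y_i\bigr].
\]
The confining piece is immediate: $\sum_{i\in\In_{q,p}} (Y_i - \bar Y_{q,p})\,\alpha Y_i = \alpha\sum_{i\in\In_{q,p}}(Y_i - \bar Y_{q,p})^2 = \alpha P^2_{q,p}$, using that $\sum_{i\in\In_{q,p}}(Y_i - \bar Y_{q,p})=0$ kills the cross term.

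For the gradient-of-$\E$ piece, the computation is formally identical to \eqref{test}--$T_1$--$T_2$ in the proof of Lemma \ref{evolutionsecondmomentetH}, with $X$ replaced by $Y$ and $\In$ replaced by $\In_{q,p}$. Namely, the telescoping of the nearest-neighbour terms produces the integer contribution $p-q=m$, the symmetric simplification of the inner-inner interaction contributes $-2\chi h_N\cdot\tfrac{m(m+1)}{2}$, the two boundary terms and the inner-outer double sum are controlled by Cauchy-Schwarz in terms of $\sqrt{P^2_{q,p}\,\H_{q,p,2}}$ with prefactor $2+2\chi/\sqrt{N}$. Collecting these pieces yields
\[
\Bigl|\sum_{i\in \In_{q,p}} (Y_i - \bar Y_{q,p})\bigl[-\partial_{Y_i}\E(Y)\bigr] - (p-q)\bigl(1 - \chi h_N(p-q+1)\bigr)\Bigr| \leq \bigl(2+\tfrac{2\chi}{\sqrt{N}}\bigr)\sqrt{P^2_{q,p}\,\H_{q,p,2}},
\]
and since $h_N(p-q+1) = (p-q+1)/(N+1) = 1/\chi_N^{p-q+1}$, the constant term equals exactly $\alpha_{q,p}$. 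Adding back $\alpha P^2_{q,p}$ gives \eqref{secondmomentdecroissantr}.

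There is no genuine obstacle here: the only point that needs care is the bookkeeping of the rescaling, in particular confirming that the scale factor $R(t)$ cancels out properly so that the inner-inner/boundary/inner-outer split for $Y$ is literally the split for $X$ (which it is, by the $0$-homogeneity of $\nabla\E$ in differences), and checking that the added quadratic confinement in $\E^{\rm resc}$ contributes nothing beyond the clean term $+\alpha P^2_{q,p}$.
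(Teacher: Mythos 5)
Your proposal is correct and follows essentially the same route as the paper: the paper's proof also just reruns the computation of Lemma \ref{evolutionsecondmomentetH} for the rescaled flow on the pseudo inner set $\In_{q,p}$, noting that the only new term is $\alpha\sum_{i\in\In_{q,p}}(Y_i-\bar Y_{q,p})Y_i=\alpha P^2_{q,p}$, exactly as you do. Your extra verification of the chain-rule/rescaling bookkeeping is fine (the gradient of $\E$ scales like $R^{-1}$ by the logarithmic homogeneity), and the constant indeed comes out as $\alpha_{q,p}=(p-q)\bigl(1-\chi h_N(p-q+1)\bigr)$.
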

\begin{Cor}\label{evolutionsecondmomentetHrcorrolaire}
We deduce two different estimates regarding the number of particles $p-q+1$.
\begin{enumerate}
\item If $p-q+1\leq k-1$ i.e. $\alpha_{q,p}>0$ and $\sqrt{P^2_{q,p}\H_{q,p,2}} \leq \frac{\alpha_{q,p}}{2\left(2+\frac{2\chi}{\sqrt{N}}\right)}$ then
$$\frac12 \frac{d}{d \tau} P^2_{q,p}\geq \frac{\alpha_{q,p}}{2} + \alpha P^2_{q,p}.$$
\item If $p-q+1\geq k$ i.e. $\alpha_{q,p}<0$ and $\sqrt{P^2_{q,p}\H_{q,p,2}} \leq -\frac{\alpha_{q,p}}{2\left(2+\frac{2\chi}{\sqrt{N}}\right)}$ then
$$\frac12 \frac{d}{d \tau} P^2_{q,p}\leq -\frac{\alpha_{q,p}}{2} + \alpha P^2_{q,p}.$$
\end{enumerate}
\end{Cor}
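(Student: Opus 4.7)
The plan is to derive both items as a direct sign-analysis consequence of the two-sided bound supplied by Lemma \ref{evolutionsecondmomentetHr}. The corollary is essentially book-keeping: one writes the inequality as a pair of one-sided bounds and absorbs the error term $C\sqrt{P^2_{q,p}\H_{q,p,2}}$, with $C:=2+2\chi/\sqrt{N}$, into a fraction of $|\alpha_{q,p}|$ by means of the corresponding smallness hypothesis.

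First I would determine the sign of $\alpha_{q,p}=(p-q)(1-\chi/\chi_N^{p-q+1})$. Because $\chi_N^j=(N+1)/j$ is strictly decreasing in $j$, the standing assumption $\chi_N^k<\chi<\chi_N^{k-1}$ forces $\chi_N^{p-q+1}\geq \chi_N^{k-1}>\chi$ when $p-q+1\leq k-1$, hence $\alpha_{q,p}>0$; and $\chi_N^{p-q+1}\leq \chi_N^k<\chi$ when $p-q+1\geq k$, hence $\alpha_{q,p}<0$.

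Next I would unfold Lemma \ref{evolutionsecondmomentetHr} into the two-sided estimate
\[
\alpha_{q,p}+\alpha P^2_{q,p}-C\sqrt{P^2_{q,p}\H_{q,p,2}}\;\leq\;\tfrac12\tfrac{d}{d\tau}P^2_{q,p}\;\leq\;\alpha_{q,p}+\alpha P^2_{q,p}+C\sqrt{P^2_{q,p}\H_{q,p,2}}.
\]
In case (1) the hypothesis $\sqrt{P^2_{q,p}\H_{q,p,2}}\leq \alpha_{q,p}/(2C)$ (meaningful since $\alpha_{q,p}>0$) yields $C\sqrt{P^2_{q,p}\H_{q,p,2}}\leq \alpha_{q,p}/2$, and inserting this into the lower bound gives the claimed $\tfrac12(d/d\tau)P^2_{q,p}\geq \alpha_{q,p}/2+\alpha P^2_{q,p}$. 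In case (2) the hypothesis $\sqrt{P^2_{q,p}\H_{q,p,2}}\leq -\alpha_{q,p}/(2C)=|\alpha_{q,p}|/(2C)$ yields $C\sqrt{P^2_{q,p}\H_{q,p,2}}\leq -\alpha_{q,p}/2$, and inserting this into the upper bound gives $\tfrac12(d/d\tau)P^2_{q,p}\leq \alpha_{q,p}/2+\alpha P^2_{q,p}$, which, since $\alpha_{q,p}<0$, is \emph{a fortiori} bounded above by $-\alpha_{q,p}/2+\alpha P^2_{q,p}$ as stated.

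There is no genuine obstacle: the analytic work sits entirely in Lemma \ref{evolutionsecondmomentetHr}, and the corollary merely packages that estimate in the form in which it will be iterated in Sections \ref{sec:stability} and \ref{sec:rigidity}. Case (1) will be used to show that a subcritical inner cluster (too few particles to blow up) cannot remain compressed under the rescaled dynamics, while case (2) will drive the supercritical inner set towards the rescaled blow-up profile. The only care required in the write-up is the sign bookkeeping of $\alpha_{q,p}$, together with the observation that the smallness thresholds $\pm\alpha_{q,p}/(2C)$ are precisely tuned so that exactly half of $|\alpha_{q,p}|$ survives after absorbing the exterior-interaction perturbation.
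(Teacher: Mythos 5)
Your proposal is correct and matches the paper's (implicit) argument: the corollary is obtained exactly by splitting the two-sided estimate \eqref{secondmomentdecroissantr} of Lemma \ref{evolutionsecondmomentetHr} into one-sided bounds, checking the sign of $\alpha_{q,p}$ from the monotonicity of $k\mapsto\chi_N^k$, and absorbing the exterior term using the smallness hypothesis. Your extra remark that case (2) actually yields the sharper bound $\tfrac{\alpha_{q,p}}{2}+\alpha P^2_{q,p}$, which implies the stated $-\tfrac{\alpha_{q,p}}{2}+\alpha P^2_{q,p}$ since $\alpha_{q,p}<0$, is accurate bookkeeping and fully consistent with the paper.
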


\begin{proof}
The proof is a direct computation similar to the proof of Lemma \ref{evolutionsecondmomentetH}. 
The only difference is that an additional term pops up for $\frac{d}{d \tau} P^2_{q,p}$ : $\alpha \sum_{i\in \In_q^p}\left( Y_i -\bar Y_{q,p}\right) Y_i$. 
To deal with it we remark that
\begin{align*}
\sum_{i\in \In_q^p}\left( Y_i -\bar Y_{q,p}\right) Y_i
=\sum_{i\in \In_q^p}\left( Y_i -\bar Y_{q,p}\right)^2 =P^2_{q,p},
\end{align*}
\end{proof}

\section{Induction}\label{sec:induction}
Let $\In$ be a weak blow-up set, and $q,p\in \In$. The main difficulty to obtain rigidity theorem is to control the possible oscillations of the rescaled (resp. non rescaled) system. To do this
we proceed by induction on $q<p$ to control, from below, the partial variance of all subsets of inner particles.
The inductive argument is the following. We face the following alternative: either the variance of all but the right-most particle is large, and we are done; or it is small, and the two right-most particles are far from each other. The last statement implies that the variance of all but the right-most particle increases. Consequently the partial variance cannot be too small. The two followings propositions are the tools to develop this argument for the rescaled (resp. non rescaled) system. 
\begin{Prop}[Induction]\label{inductionnonrescale}
Let $X$ be a solution of \eqref{flotgradientdiscretexplicite}--\eqref{flotgradientdiscretexplicite-boundary} and $\In$ a weak blow-up set made of $k$ particles. Let $q,p\in \In$ and define $\Pi^2_{q,p}=\Pi^2_{[q,p]}$. Suppose there exists  $t_0>0$ and $B>0$ such that
\begin{align*}
&
\forall t\in [t_0,T), \quad \begin{cases}
\vspace{1mm} |X_{p}-X_{p-1}|\geq  \frac{1}{B} \\
|X_{q}-X_{q-1}|\geq \frac{1}{B}
\end{cases} \; \text{(reinitialization step)},\\
\text{or}\\
&
\forall t\in [t_0,T), \quad  \begin{cases} \Pi^2_{q,p}  \geq \frac{1}{B^2} \\
|X_{q}-X_{q-1}|\geq \frac{1}{B} \end{cases} \; \text{(descent step)}.
\end{align*}
Then there exists $\bar B>0$, depending only on $B$, $\Pi^2_{q,p-1}(t_0)$, $N$, $\chi$ such that
\begin{equation}
\forall t\in [t_0,T), \quad \Pi^2_{q,p-1}>\frac{1}{\bar B^2}.
\end{equation}
\end{Prop}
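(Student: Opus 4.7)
The plan is to apply Lemma \ref{evolutionsecondmomentetH} to the smaller inner set $\In' := [q, p-1]$. Since $q, p \in \In$ and $|\In| = k$, one has $|\In'| = p-q \leq k-1$, hence $\chi_N^{p-q} > \chi$, and the lemma yields
\[
\frac{1}{2}\frac{d}{dt}\Pi^2_{q,p-1} \;\geq\; \beta - C_1\sqrt{\Pi^2_{q,p-1}\, H'},
\]
with $\beta := (p-q-1)\bigl(1 - \chi/\chi_N^{p-q}\bigr) > 0$, $C_1 := 2+2\chi/\sqrt{N}$, and $H' := H_{\In',\Out',2}$ where $\Out' := [1,N]\setminus \In'$. (The case $p = q+1$ is degenerate since $\Pi^2_{q,p-1} \equiv 0$; I tacitly assume $p \geq q+2$.) The next step is to bound $H'$ via the endpoint gaps. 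Monotonicity of the $X_i$ together with the standing hypothesis $|X_q - X_{q-1}| \geq 1/B$ immediately yields $|X_i - X_j| \geq 1/B$ for $i \in \In'$ and $j < q$; while for $j \geq p$ and $i \in \In'$, one has $|X_j - X_i| \geq X_p - X_{p-1}$. Hence whenever $X_p - X_{p-1} \geq \delta > 0$, $H' \leq N^2\max(B^2, \delta^{-2}) =: C_2(B, \delta, N)$.

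The heart of the argument is a self-correction mechanism: a small variance on $[q,p-1]$ forces a large gap at $p$. First, the reinitialization case reduces to the descent case up to constants, because $\Pi^2_{q,p} \geq \tfrac{p-q}{p-q+1}(X_p-X_{p-1})^2 \geq \tfrac{1}{2B^2}$, so I treat only the descent case. Starting from the identity
\[
\Pi^2_{q,p} = \Pi^2_{q,p-1} + \tfrac{p-q}{p-q+1}\bigl(X_p - \bar X_{q,p-1}\bigr)^2
\]
together with $|X_{p-1} - \bar X_{q,p-1}| \leq \sqrt{\Pi^2_{q,p-1}}$ and $X_p > X_{p-1} \geq \bar X_{q,p-1}$, one sees that whenever $\Pi^2_{q,p-1} \leq \eta$,
\[
X_p - X_{p-1} \;\geq\; \sqrt{1/B^2 - \eta} - \sqrt{\eta}.
\]
I then choose $\eta_0 = \eta_0(B, N, \chi) > 0$ small enough that this lower bound is at least $1/(2B)$ and simultaneously $C_1\sqrt{C_2(B, 1/(2B), N)\,\eta_0} \leq \beta/2$. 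On the set $\{t : \Pi^2_{q,p-1}(t) \leq \eta_0\}$, combining the two controls yields $\tfrac{1}{2}\tfrac{d}{dt}\Pi^2_{q,p-1} \geq \beta/2 > 0$.

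A standard continuity argument then closes the proof: since $\Pi^2_{q,p-1}$ has strictly positive derivative whenever it drops to $\eta_0$ or below, it remains $\geq \min\bigl(\Pi^2_{q,p-1}(t_0), \eta_0\bigr)$ throughout $[t_0, T)$; setting $\bar B^{-2}$ equal to this minimum gives the announced bound, with $\bar B$ depending only on $B$, $\Pi^2_{q,p-1}(t_0)$, $N$, and $\chi$. The main obstacle I anticipate is the quantitative calibration of $\eta_0$: the two conditions on it (the lower bound on the gap $X_p - X_{p-1}$ and the sign of the variance derivative) must hold simultaneously, and this is where all the parameters $B$, $N$, $\chi$ interact to determine the final $\bar B$. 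The remaining ingredients—the variance-evolution estimate from Lemma \ref{evolutionsecondmomentetH}, the elementary variance-update identity, and the concluding ODE barrier argument—are routine.
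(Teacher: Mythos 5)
Your proof is correct and follows essentially the same route as the paper's: the variance-evolution estimate of Lemma \ref{evolutionsecondmomentetH} applied to the sub-inner set $[q,p-1]$, the observation that a small $\Pi^2_{q,p-1}$ together with the descent hypothesis forces a large gap $X_p-X_{p-1}$ and hence a bounded exterior potential, and a threshold/barrier argument yielding $\min\bigl(\Pi^2_{q,p-1}(t_0),\eta_0\bigr)$ as the lower bound. The only (harmless) deviations are cosmetic: you use the exact one-point variance-update identity where the paper uses the convexity chain \eqref{calcul1}--\eqref{calcul4}, you fold the reinitialization case into the descent case instead of treating it directly, and you make explicit the degenerate case $p=q+1$ that the paper leaves implicit.
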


\begin{Prop}[Rescaled Induction]\label{induction}
Let $X$ be a solution of \eqref{flotgradientdiscretexplicite}--\eqref{flotgradientdiscretexplicite-boundary} and $\In$ a weak blow-up set made of $k$ particles. Under the assumptions of Definition \ref{def:solutionrescaledone}, let $q,p\in \In$ and suppose there exists $\tau_0>0$ and $B>0$ such that
\begin{align*}
&
\forall \tau>\tau_0, \quad \begin{cases}
\vspace{1mm} |Y_{p}-Y_{p-1}|\geq  \frac{1}{B} \\
|Y_{q}-Y_{q-1}|\geq \frac{1}{B}
\end{cases} \; \text{(reinitialization step)},\\
\text{or}\\
&
\forall \tau>\tau_0, \quad  \begin{cases} P^2_{q,p}  \geq \frac{1}{B^2} \\
|Y_{q}-Y_{q-1}|\geq \frac{1}{B} \end{cases} \; \text{(descent step)}.
\end{align*}
Then there exists $\bar B>0$, depending only on $B$, $P^2_{q,p-1}(\tau_0)$, $N$, $\chi$ such that
\begin{equation}
\forall \tau>\tau_0, \quad P^2_{q,p-1}>\frac{1}{\bar B^2}.
\end{equation}
\end{Prop}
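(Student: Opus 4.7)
My strategy is to apply Corollary \ref{evolutionsecondmomentetHrcorrolaire}(1) to the subset $[q,p-1]$. Since $|\In|=k$ and $q,p\in\In$ with $q<p$, one has $p-q\leq k-1$, so $[q,p-1]$ contains at most $k-1$ particles and the coefficient $\alpha_{q,p-1}=(p-q-1)(1-\chi/\chi_N^{p-q})$ is strictly positive. Corollary \ref{evolutionsecondmomentetHrcorrolaire}(1) then guarantees that as long as $\sqrt{P^2_{q,p-1}\,\H_{q,p-1,2}}$ is small compared with $\alpha_{q,p-1}$, the variance $P^2_{q,p-1}$ has a strictly positive growth rate. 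The proof breaks into three stages: (i) reduce the descent hypothesis to the same gap bounds as in the reinitialization step, under a smallness assumption on $P^2_{q,p-1}$; (ii) deduce a uniform upper bound on $\H_{q,p-1,2}$; (iii) apply Corollary \ref{evolutionsecondmomentetHrcorrolaire}(1) and conclude via a barrier/continuity argument.

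For stage (i), I would use the elementary variance decomposition
\[
P^2_{q,p}=P^2_{q,p-1}+\frac{p-q}{p-q+1}\bigl(Y_p-\bar Y_{q,p-1}\bigr)^2.
\]
Assume we are in the descent case and that $P^2_{q,p-1}\leq\eta$ for a parameter $\eta$ to be chosen. Combined with $P^2_{q,p}\geq 1/B^2$, the identity forces $|Y_p-\bar Y_{q,p-1}|\geq c_1/B$ for an explicit $c_1>0$, while $|Y_{p-1}-\bar Y_{q,p-1}|\leq \sqrt{P^2_{q,p-1}}\leq\sqrt\eta$. The triangle inequality then gives $|Y_p-Y_{p-1}|\geq c_1/B-\sqrt\eta$, so for $\eta=\eta(B)$ small enough one secures $|Y_p-Y_{p-1}|\geq 1/(2B)$. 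Together with the hypothesis $|Y_q-Y_{q-1}|\geq 1/B$, the descent case, restricted to the regime $\{P^2_{q,p-1}\leq\eta\}$, inherits exactly the reinitialization-type gap bounds on the two immediate exterior neighbours of $[q,p-1]$.

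For stage (ii), the ordering of the rescaled particles gives $|Y_j-Y_i|\geq|Y_p-Y_{p-1}|$ whenever $j\geq p$ and $i\in[q,p-1]$, and symmetrically $|Y_j-Y_i|\geq|Y_q-Y_{q-1}|$ for $j\leq q-1$. Hence $\H_{q,p-1,2}\leq M$ for an explicit $M=M(B,N)$. For stage (iii), I would choose
\[
\delta:=\min\!\Bigl(\eta,\;\tfrac{1}{M}\bigl(\tfrac{\alpha_{q,p-1}}{2(2+2\chi/\sqrt N)}\bigr)^{\!2}\Bigr),
\]
so that as soon as $P^2_{q,p-1}\leq\delta$ the hypothesis of Corollary \ref{evolutionsecondmomentetHrcorrolaire}(1) is met and $\tfrac{d}{d\tau}P^2_{q,p-1}\geq\alpha_{q,p-1}+2\alpha P^2_{q,p-1}>0$. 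A standard barrier argument (if $P^2_{q,p-1}$ were to first touch $\delta$ from above at some $\tau_1>\tau_0$, its derivative at $\tau_1$ would be strictly positive, contradicting the touching) yields $P^2_{q,p-1}(\tau)\geq\min\bigl(P^2_{q,p-1}(\tau_0),\delta\bigr)$ for all $\tau>\tau_0$, which is the desired $1/\bar B^2$ bound.

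I expect the main obstacle to be the calibration in stage (i): the parameter $\eta$ must be small enough that the induced gap $|Y_p-Y_{p-1}|$ makes $\H_{q,p-1,2}$ controllable, yet the resulting $\delta$ must remain compatible with both the hypothesis $P^2_{q,p}\geq 1/B^2$ and the Corollary. The subtlety is that the descent hypothesis only bounds the variance of $[q,p]$ from below, not the individual gaps, so the information about $|Y_p-Y_{p-1}|$ must be extracted indirectly through the variance decomposition and the smallness of $P^2_{q,p-1}$. Once this threading is in place, stages (ii) and (iii) are a routine adaptation of the argument behind Proposition \ref{inductionnonrescale}, with the extra term $\alpha P^2_{q,p-1}$ only strengthening the lower bound on the time derivative.
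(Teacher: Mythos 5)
Your proposal is correct and takes essentially the same route as the paper's proof: in the descent case you show that smallness of $P^2_{q,p-1}$ combined with $P^2_{q,p}\geq 1/B^2$ forces a lower bound on $|Y_p-Y_{p-1}|$, then you bound $\H_{q,p-1,2}$ using the ordering of the particles and invoke Corollary \ref{evolutionsecondmomentetHrcorrolaire}(1) together with a monotonicity/barrier argument to conclude $P^2_{q,p-1}\geq\min\bigl(P^2_{q,p-1}(\tau_0),\delta\bigr)$. The only cosmetic difference is that you extract the gap bound from the exact one-point variance-update identity, whereas the paper reaches the same conclusion through the convexity inequalities \eqref{calcul1}--\eqref{calcul4}.
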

To illustrate the proof in both cases we refer to figures \ref{figsckpnr} and  \ref{figsckpnd}.
\begin{proof}[Proof of proposition \ref{induction}] We distinguish between the descent case and the reinitialization step.
\subsection*{1- The reinitialization step.}
\begin{figure}
\includegraphics[scale=0.7]{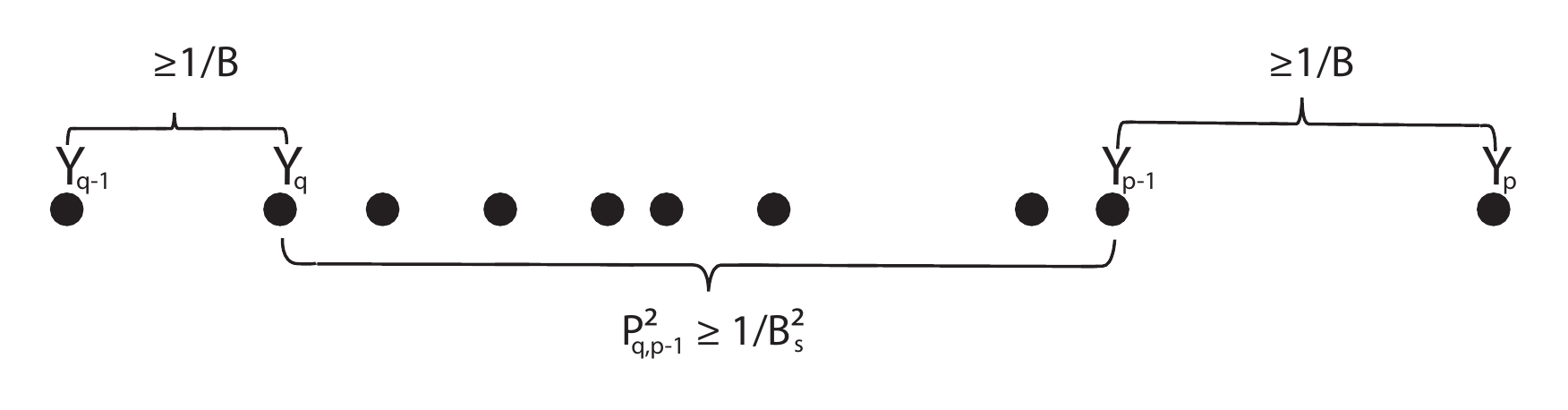}
\caption{$p-q+1<k$. In the reinitialization step, the lower bound of the extremal relative distances implies that $P^2_{q,p-1}$ cannot be too small }\label{figsckpnr}
\end{figure}
In this case we can bound from above the exterior potential $\H_{q,p-1,2}$ and we deduce that the variance $P^2_{q,p-1}$ stays away from $0$. We have,
\begin{align}
\H_{q,p-1,2}&= \sum_{j\in\Out_{q,p-1} } \sum_{i\in \In_{q,p-1} }\frac{1}{\left(Y_j-Y_i\right)^2}\\
&\leq N^2 \min \left(\frac{1}{|Y_{q-1}-Y_q|^2} ,\frac{1}{|Y_{p}-Y_{p-1}|^2}\right)
\leq N^2 B^2.
\end{align}
Furthermore, as long as
\[
P^2_{q,p-1} \leq \frac{1}{N^2B^2}\left( \frac{\alpha_{q,p-1}}{2\left(2+\frac{2\chi}{\sqrt{N}}\right)} \right)^2,
\]
we have
\[
\sqrt{P^2_{q,p-1}\H_{q,p-1,2}} \leq \frac{\alpha_{q,p-1}}{2\left(2+\frac{2\chi}{\sqrt{N}}\right)}\, .
\]
Plugging this into Corollary \ref{evolutionsecondmomentetHrcorrolaire}, together with $p-q \leq k-1$, we get that the variance $P^2_{q,p-1}$ increases:
\[
\frac12 \frac{d}{d \tau} P^2_{q,p-1} \geq \frac{\alpha_{q,p-1}}{2} + \alpha P^2_{q,p-1}>0.
\]



We easily deduce the existence of $\bar B$.
\[
P^2_{q,p-1} \geq \min \left( P^2_{q,p-1}\left( \tau_0 \right), \frac{1}{N^2B^2}\left( \frac{\alpha_{q,p-1}}{2\left(2+\frac{2\chi}{\sqrt{N}}\right)} \right)^2 \right)=\frac{1}{\bar B^2}
\]

\subsection*{2- The descent step.}
\begin{figure}
\includegraphics[scale=0.7]{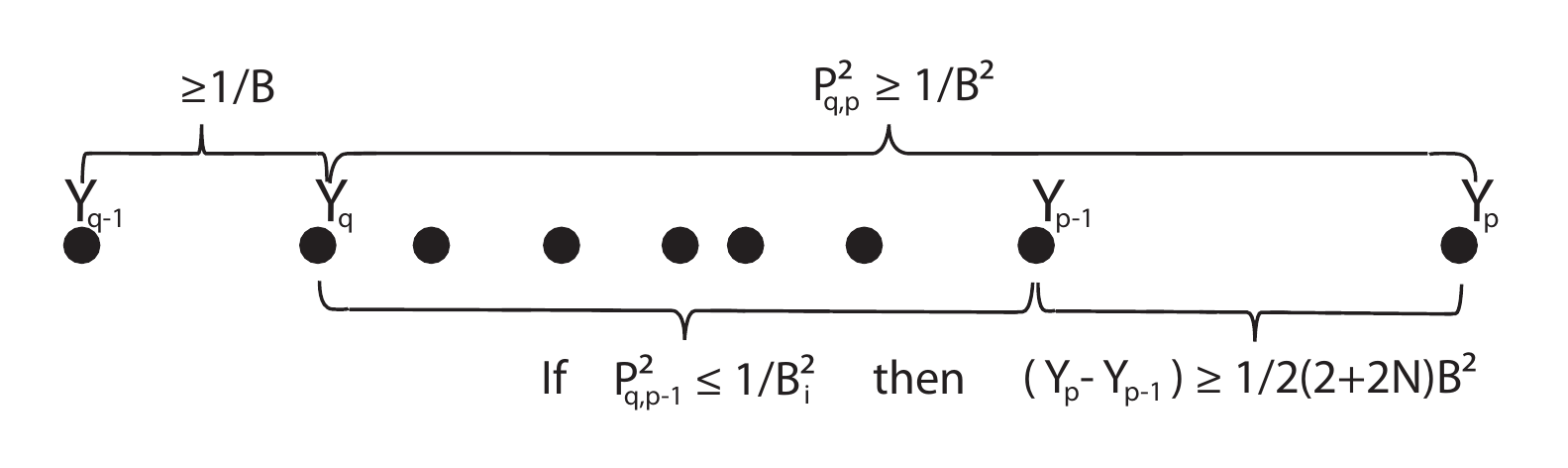}
\caption{$p-q+1<k$. In the descent step, since $P^2_{q,p}$ is large then both $P^2_{q,p-1}$ and $\left( Y_{p} - Y_{p-1}\right)^2$ cannot be too small.}\label{figsckpnd}
\end{figure}
In this case we are not able to bound directly $\H_{q,p-1,2}$ from above. Alternatively we  show that, under the condition that $P^2_{q,p-1}$ is small and  $P^2_{q,p}$ is large, we get such an estimate. First, we make a link between $P^2_{q,p-1}$, $P^2_{q,p}$ and $\left( Y_{p} - Y_{p-1}\right)^2$:
\begin{align}
\nonumber P^2_{q,p}&=\sum_{i\in \In_{q,p}}\left( Y_i -\bar Y_{q,p}\right)^2=\left( Y_{p} -\bar Y_{q,p}\right)^2+\sum_{i=q}^{p-1}\left( Y_i-\bar Y_{q,p-1}+\bar Y_{q,p-1} -\bar Y_{q,p}\right)^2\\
\nonumber &=\left( Y_{p} -\bar Y_{q,p}\right)^2+\sum_{i\in \In_{q,p-1}}\left( Y_i-\bar Y_{q,p-1}\right)^2+\sum_{i\in \In_{q,p-1}}\left( \bar Y_{q,p-1} -\bar Y_{q,p}\right)^2\\
\nonumber &+2\sum_{i\in \In_{q,p-1}}\left( Y_i-\bar Y_{q,p-1}\right)\left(\bar Y_{q,p-1} -\bar Y_{q,p}\right)\\
\label{calcul1} &=\left( Y_{p} -\bar Y_{q,p}\right)^2+P^2_{q,p-1}+2\left(p-q\right)\left( \bar Y_{q,p-1} -\bar Y_{q,p}\right)^2
\end{align}
By convexity we have
\begin{align}
\nonumber \left( \bar Y_{q,p} -\bar Y_{q,p-1}\right)^2&\leq \left( Y_{p} -\bar Y_{q,p-1}\right)^2 \leq \left( Y_{p} - Y_{p-1} + Y_{p-1}-\bar Y_{q,p-1}\right)^2\\
\nonumber &\leq 2\left( Y_{p} - Y_{p-1}\right)^2+2\left(Y_{p-1}-\bar Y_{q,p-1}\right)^2\\
\label{calcul2} &\leq 2\left( Y_{p} - Y_{p-1}\right)^2+2P^2_{q,p-1},
\end{align}
and
\begin{equation}\label{calcul3}
\left( Y_{p} -\bar Y_{q,p}\right)^2\leq \left( Y_{p} -\bar Y_{q,p-1}\right)^2 \leq 2\left( Y_{p} - Y_{p-1}\right)^2+2P^2_{q,p-1}.
\end{equation}
Plugging \eqref{calcul2} and \eqref{calcul3} in \eqref{calcul1} we obtain
\begin{equation}\label{calcul4}
\frac{1}{B^2} \leq P^2_{q,p}\leq \left(3+4N\right)P^2_{q,p-1}+\left(2+4N\right)\left( Y_{p} - Y_{p-1}\right)^2
\end{equation}
Using \eqref{calcul4} we see that $P^2_{q,p-1}$ and  $\left( Y_{p} - Y_{p-1}\right)^2$ cannot be small at the same time.
Precisely for any $B_i>0$ two cases may happen: either $P^2_{q,p-1}\geq \frac{1}{B_i^2}$ or $P^2_{q,p-1}\leq \frac{1}{B_i^2}$.
In the latter the Equation \eqref{calcul4} gives a lower bound for $\left( Y_{p} - Y_{p-1}\right)^2$.
\begin{equation*}
\left( Y_{p} - Y_{p-1}\right)^2 \geq \frac{1}{2+4N} \left( \frac{1}{B^2}-\frac{3+4N}{B_i^2} \right).
\end{equation*}
Taking $B_i$ large enough, for example $B_i^2 \geq 2 \left( 3+4N \right) B^2 $, we obtain
\begin{equation*}\label{calculborneendessous}
\left( Y_{p} - Y_{p-1}\right)^2 \geq \frac{1}{2(2+4N)} \frac{1}{B^2}.
\end{equation*}
On the other side of the pseudo inner set $\In_{q,{p-1}}$, the hypothesis is $\left( Y_{q} - Y_{q-1}\right)^2 \geq \frac{1}{B^2}$.
We deduce an upper bound for $\H_{q,p-1,2}$ and therefore an upper bound for $\sqrt{P^2_{q,p-1}\H_{q,p-1,2}}$. Similarly as in the reinitialization step:
\begin{align*}
\H_{q,p-1,2}
\leq N^2\left( B^2+ 2(2+4N)B^2 \right)\leq  N^2\left(5+8N\right)B^2.
\end{align*}
Then taking $B_i$ larger if needed, such that $B_i\geq N \sqrt{5+4N}B \frac{2\left(2+\frac{2\chi}{\sqrt{N}}\right)}{\alpha_{q,p-1}} $, we get
\begin{align}
\nonumber \sqrt{P^2_{q,p-1}\H_{q,p-1,2}}&\leq \frac{1}{B_i} N \sqrt{5+4N}B \\
\label{majrationpourdecroicasinduction}&\leq \frac{\alpha_{q,p-1}}{2\left(2+\frac{2\chi}{\sqrt{N}}\right)}.
\end{align}
Thus, the hypotheses of Corollary \ref{evolutionsecondmomentetHrcorrolaire} are fulfilled, it give that $P^2_{q,p-1}$ increase. \\
{\bf In any case} either $P^2_{q,p-1}$ is large or $P^2_{q,p-1}$ increases. We  deduce a lower bound for $P^2_{q,p-1}$:
\begin{align*}
P^2_{q,p-1} &\geq \min \left( P^2_{q,p-1}(\tau_0), \frac{1}{B_i^2} \right)\\
           &\geq \min \left( P^2_{q,p-1}(\tau_0), \frac{1}{2 \left( 3+4N \right) B^2}, \frac{\alpha_{q,p-1}}{4\left(2+\frac{2\chi}{\sqrt{N}}\right)^2 N^2 \left( 5+4N \right) B^2} \right)=\frac{1}{\bar B^2}.
\end{align*}
This proves the descent step of Proposition \ref{induction}, and finishes the proof of this proposition.
\end{proof}

\begin{proof}[Proof of Proposition \ref{inductionnonrescale}]
Mutatis Mutandis the proof is exactly the same as the one done for Proposition \ref{induction}. Using Lemma \ref{secondmomentdecroissant} instead of Corollary \ref{evolutionsecondmomentetHrcorrolaire}.
\end{proof}

\section{Stability}\label{sec:stability}
In this section we fix $\chi^k_N< \chi < \chi^{k-1}_N$. Then we exhibit stable sets of $k$ strongly blowing-up particles.
Our strategy is to obtain estimates on the blow-up time by showing that even if the problem is non linear and non local we can
focus on an isolated subset of particles, called the inner set, for which the dynamics is almost local.
\subsection{Rigidity for $k$ particles}
We start with a rigidity proposition.
\begin{Prop}[Weak is Strong]\label{prop:weakisstrong}
A weak blow-up set made of $k$ particles is a strong blow-up set.
\end{Prop}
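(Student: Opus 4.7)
The approach is a contradiction argument combining Proposition \ref{prop:weakisstronweak} and the non-rescaled induction Proposition \ref{inductionnonrescale}. By Proposition \ref{prop:weakisstronweak} the variance $\Pi^2_\In(t)$ admits a limit $\Pi \geq 0$ as $t \to T^-$, and $\In$ is a strong blow-up set if and only if $\Pi = 0$. Suppose, toward a contradiction, that $\Pi > 0$.

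Write $\In = [l, l+k-1]$. By the maximality of $\In$ as a weak blow-up set, the adjacent outer distances cannot have zero $\liminf$, so (assuming for concreteness $l > 1$) $\liminf_{t \to T^-}(X_l - X_{l-1}) > 0$. Thus there exist $t_0 < T$ and $c > 0$ with
\begin{equation*}
\Pi^2_\In(t) \geq 1/c^2 \quad\text{and}\quad |X_l - X_{l-1}|(t) \geq 1/c \qquad \forall\, t \in [t_0, T).
\end{equation*}
I now apply the descent step of Proposition \ref{inductionnonrescale} iteratively, holding $q = l$ fixed and decreasing $p$ from $l+k-1$ to $l+1$. The initial step at $(q, p) = (l, l+k-1)$ yields $\Pi^2_{l, l+k-2} \geq 1/\bar B_1^2$ on $[t_0, T)$. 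At the next step, this new lower bound together with the persistent bound $|X_l - X_{l-1}| \geq 1/c$ re-triggers the descent, producing $\Pi^2_{l, l+k-3} \geq 1/\bar B_2^2$, and so on. Each application is legitimate because the ``output'' interval $[l, l+j]$ with $1 \leq j \leq k-2$ contains at most $k-1$ particles, and the variance-increase coefficient $j(1 - \chi/\chi_N^{j+1})$ appearing in \eqref{secondmomentdecroissant} is strictly positive thanks to $\chi < \chi_N^{k-1} \leq \chi_N^{j+1}$.

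After $k-2$ iterations one reaches $\Pi^2_{l, l+1}(t) \geq 1/\bar B^2$ for all $t \in [t_0, T)$. Since $\Pi^2_{l, l+1} = \tfrac12(X_{l+1} - X_l)^2$, this forces $\liminf_{t \to T^-}(X_{l+1} - X_l) > 0$, contradicting weak blow-up \eqref{weakblowup} on the adjacent pair $(l, l+1)$. Hence $\Pi = 0$ and Proposition \ref{prop:weakisstronweak} delivers the claim. The main task is the finite bookkeeping across the iterations: each $\bar B_i$ depends only on $\bar B_{i-1}$, on $\Pi^2_{l, l+k-1-i}(t_0)$, and on $N, \chi$, per Proposition \ref{inductionnonrescale}, so the chain of constants stays finite. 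The degenerate case $k = 2$ needs no iteration ($\Pi^2_\In$ then equals $\tfrac12(X_{l+1}-X_l)^2$ outright), and a blow-up set touching index $1$ or $N$ is handled by the symmetric induction (decreasing $q$ while fixing $p$) on the accessible side.
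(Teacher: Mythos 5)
Your proposal is correct and follows essentially the same route as the paper's proof: invoke Proposition \ref{prop:weakisstronweak} to reduce to showing the limiting variance vanishes, argue by contradiction, use maximality to bound the adjacent outer gap $|X_l-X_{l-1}|$ from below, and then iterate the descent step of Proposition \ref{inductionnonrescale} with $q=l$ fixed and $p$ decreasing until a lower bound on $\Pi^2_{l,l+1}$ contradicts the weak blow-up condition on the pair $(l,l+1)$. Your extra remarks on the positivity of the coefficient, the case $k=2$, and blow-up sets touching the boundary are harmless refinements of the same argument.
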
 
\begin{proof}
This proof is an ersatz of the proof of Theorem \ref{estimationbornesuperieur} done in section \ref{sec:rigidity}.
Let $\In$ be a weak blow-up set of $k$ particles. According to Proposition \ref{prop:weakisstronweak}, $\lim_{t\to T^- } \Pi_{\In}^2$ exists, let $\Pi$ be the limit. If $\Pi=0$ Proposition \ref{prop:weakisstronweak} says that $\In$ is a strong blow-up set.

Assume by contradiction that $\Pi>0$. It implies that $\Pi^2_{\In}$ is bounded from below, say by $\frac{1}{A^2}$. Thanks to the induction procedure described in Proposition \ref{inductionnonrescale}, we are able to isolate the left-most relative distance using the descent case of proposition \ref{induction}.

Let $\In=[l,l+k-1]$.
Since $\In$ is a weak blow-up set, taking $A$ larger if needed, the maximality property implies $|X_l-X_{l-1} |\geq \frac{1}{A}$. With $q=l$ and $p=l+k-1$ we are exactly in the descent case of Proposition \ref{inductionnonrescale}. It gives us $\bar B_1>A>0$ such that $\Pi^2_{l,l+k-2}\geq \frac{1}{\bar B_1^2}$. Since $|X_l-X_{l-1}| \geq \frac{1}{A} \geq \frac{1}{\bar B_1}$ we apply the descent case of proposition \ref{induction} again, with $q=l$ and $p=l+k-2$, in order to gain an additional notch on the $p$ index. We repeat the same argument for $p$ down to $p=l+2$. We obtain finally a lower bound, say $\frac{1}{\bar B^2}$, for $\Pi^2_{l,l+1}$. It gives us a lower bound for $|X_{l+1}-X_{l}|$:
\begin{align*}
\left(  X_{l+1}-X_{l} \right)^2 
\geq \Pi^2_{l,l+1}
\geq \frac{1}{\bar B^2}.
\end{align*}
This is a contradiction with $\liminf_{t\to T^- } |X_{l+1}-X_{l}| =0$. It proves that $\In$ is a strong blow-up set.
\end{proof}

\subsection{Proof of Theorem \ref{Thm:stabilityintro}}

We first give a more precise version of Theorem \ref{Thm:stabilityintro}. We exhibit below basins of attractions where $k$ particles only will be aggregated in a strong blow-up set. We define for $\chi_N^{k}< \chi < \chi_N^{k-1} $:
\begin{equation}\label{def:stabilityset}
D^{\epsilon,\frac{c}{\epsilon}}_{N,\chi}  =\lbrace X \in \R^N \mbox{ such that } \exists \,  \In,   \mbox{ with } |\In|=k, \, \Pi^2_{\In} \le \epsilon, \mbox{ and } H_{\In\Out,2} < \frac{c}{\epsilon} \rbrace ,
\end{equation}
where $\In = [l,l+k-1]$ and $\Out =[1,N] \setminus \In$.\\
This set corresponds to $k$ particles being close to each other, and all the other one being far from $\In$, but with relative distances of the same order of magnitude $O(\sqrt{\epsilon})$. Furthermore let
\begin{equation}\label{cnavantledebut}
C_N \leq \min  \left( \frac{\left(k-1\right)\left(\frac{\chi} {\chi^k_N}-1\right)}{2C_{4,2}\left(12+14\chi +4N^{1/4} \right)},\\
\frac18 \frac{\left(k-1\right)^2\left(\frac{\chi} {\chi^k_N}-1\right)^2}{\left(2+\frac{2\chi}{\sqrt{N}}\right)}  \right).
\end{equation}
We also set $\alpha=-\left(k-1\right)\left(1-\frac{\chi} {\chi^k_N}\right)>0$ and $\beta =4C_{4,2}\left(12+14\chi +4N^{1/4} \right)C_N^2$.

\begin{Thm}\label{Thm:stability}
Let $\chi_N^{k+1}< \chi < \chi_N^k $. Suppose
there exists $t_0 \in\left[0,T\right)$ such that $ X\left(t_0 \right) \in D^{\epsilon,\frac{C_N}{\epsilon}}_{N,\chi} $.
Then
\begin{equation}\label{attractiondeux}
\forall s\in[0,T-t_0), \quad X\left(t_0+s\right)\in D^{\epsilon-s{\alpha},\frac{C_N}{\epsilon}+s\frac{\beta}{\epsilon^2}}_{N,\chi}.
\end{equation}
Moreover one the two following items holds:
\begin{itemize}
\item[(i)] none of the particles in $\In$ contributes to the blow-up,
\item[(ii)] $\In$ is a strong blow-up set.
\end{itemize}
In particular if \vc{} there exists $i_0\in \In $ with $\liminf_{t\to T^- } |X_{i_0+1}-X_{i_0}| =0$
then $\In$ is a strong blow-up set aggregating exactly $k$ particles.
\end{Thm}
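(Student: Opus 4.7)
The plan is a continuous induction (bootstrap) built on the differential inequalities of Lemma~\ref{evolutionsecondmomentetH}. Set $\phi(s) = \epsilon - s\alpha$ and $\psi(s) = C_N/\epsilon + s\beta/\epsilon^2$, and define
\[
T^\star = \sup\{\sigma \in [0, T-t_0) : \Pi^2_{\In}(t_0+s) \le \phi(s) \text{ and } H_{\In\Out,2}(t_0+s) \le \psi(s) \text{ for all } s \in [0,\sigma]\}.
\]
Continuity of the flow makes $T^\star > 0$, and the goal is to show $T^\star = T-t_0$ by verifying that both defining inequalities strictly improve on $[0, T^\star]$.

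On the bootstrap interval, $\Pi^2_{\In}\ge 0$ forces $T^\star \le \epsilon/\alpha$. Together with the explicit choice $\beta = 4C_{4,2}(12+14\chi+4N^{1/4})C_N^2$ and the second clause of \eqref{cnavantledebut}, this keeps $\psi(s) \le 2C_N/\epsilon$ throughout, so that $\Pi^2_{\In}\,H_{\In\Out,2}$ stays of order $C_N$. The first clause of \eqref{cnavantledebut} is then exactly what is needed to absorb the cross-term in \eqref{secondmomentdecroissant}, giving $(2+2\chi/\sqrt{N})\sqrt{\Pi^2_{\In}\,H_{\In\Out,2}} \le \alpha/2$, so that
\[
\frac{d}{dt}\Pi^2_{\In} \le -2\alpha + \alpha = -\alpha,
\]
and integration strictly improves the $\phi$-bound. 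For $\psi$, plugging the uniform bound $H_{\In\Out,2}\le 2C_N/\epsilon$ into \eqref{Hcroissantdeux} yields $\frac{d}{dt}H_{\In\Out,2} \le \beta/\epsilon^2$, which strictly improves the $\psi$-bound. This closes the bootstrap and establishes \eqref{attractiondeux}.

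For the dichotomy, \eqref{attractiondeux} together with $T<\infty$ implies $H_{\In\Out,2}\le M$ on $[t_0, T)$ for some finite $M$; in particular $|X_j - X_i| \ge M^{-1/2}$ for every $j \in \Out$ and $i \in \In$. Hence no connected blow-up set can straddle the boundary between $\In$ and $\Out$, and the following two cases exhaust all possibilities: (i) no particle of $\In$ belongs to any blow-up set; or (ii) some blow-up set $\mathcal{J}$ meets $\In$, and then $\mathcal{J}\subseteq\In$. In case (ii), Proposition~\ref{blowupaumoinsk} gives $|\mathcal{J}|\ge k = |\In|$, forcing $\mathcal{J}=\In$, and Proposition~\ref{prop:weakisstrong} then promotes $\In$ to a strong blow-up set. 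The final assertion is immediate: the hypothesis $\liminf_{t\to T^-}|X_{i_0+1}-X_{i_0}|=0$ with $i_0\in\In$ excludes case (i).

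The main obstacle is the coupled nature of the bootstrap: $\Pi^2_{\In}$ is pulled downward by the critical-exponent gain $-\alpha$ but perturbed by the outer particles through $\sqrt{\Pi^2_{\In}\,H_{\In\Out,2}}$, while $H_{\In\Out,2}$ itself obeys a Riccati-type inequality that could in principle diverge in finite time. The two conditions defining $C_N$ in \eqref{cnavantledebut}---a linear one controlling the size of the cross-term and a quadratic one controlling the growth rate of the outer potential---are precisely calibrated so that the maximal lifespan $\epsilon/\alpha$ of the $\phi$-bound is shorter than the doubling time of $\psi$, making the two inequalities self-improving throughout $[t_0, T)$.
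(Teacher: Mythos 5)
Your strategy is the same as the paper's: run the two differential inequalities of Lemma \ref{evolutionsecondmomentetH} simultaneously, use the calibration \eqref{cnavantledebut} to keep the cross term $\sqrt{\Pi^2_{\In}H_{\In\Out,2}}$ below $\tfrac{\alpha}{2(2+2\chi/\sqrt N)}$ so that $\Pi^2_{\In}$ decays linearly and forces $T-t_0\le \epsilon/\alpha$, and conclude with the same dichotomy (boundedness of $H_{\In\Out,2}$ prevents a blow-up set from straddling $\In$ and $\Out$, Proposition \ref{blowupaumoinsk} forces such a set to be all of $\In$, and Proposition \ref{prop:weakisstrong} upgrades it to a strong blow-up set). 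That part of your write-up is in order.

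There is, however, one step of the bootstrap that does not close with the constants as stated. To propagate $H_{\In\Out,2}\le\psi(s)=C_N/\epsilon+s\beta/\epsilon^2$ you claim $\psi(s)\le 2C_N/\epsilon$ on the whole window, so that $\gamma_N\psi(s)^2\le\beta/\epsilon^2=\psi'(s)$, where $\gamma_N=C_{4,2}(N)\left(12+14\chi+4N^{1/4}\right)$. But $\psi(s)\le 2C_N/\epsilon$ only for $s\le \epsilon/(4\gamma_N C_N)$, whereas the window runs up to $\epsilon/\alpha$, and the first clause of \eqref{cnavantledebut} only guarantees $\epsilon/\alpha\le\epsilon/(2\gamma_N C_N)$: if $C_N$ is taken at the allowed maximum one gets $\psi(\epsilon/\alpha)=3C_N/\epsilon$, the comparison $\gamma_N\psi^2\le\psi'$ fails on the last part of the window, and the self-improvement of the $\psi$-bound is no longer justified. (You have also swapped the roles of the two clauses of \eqref{cnavantledebut}: the first, linear one is what keeps the $H$-bound alive past time $\epsilon/\alpha$, while the second, quadratic one absorbs the cross term; harmless since both hold, but a sign the constants were not checked.) The repair is exactly the paper's: integrate the Riccati inequality \eqref{Hcroissantdeux} exactly to obtain $H_{\In\Out,2}(t_0+s)\le\left(\epsilon/C_N-\gamma_N s\right)^{-1}$, which stays below $2C_N/\epsilon$ for all $s\le\epsilon/(2\gamma_N C_N)\ge\epsilon/\alpha$ by the first clause of \eqref{cnavantledebut}, and which is furthermore dominated by $C_N/\epsilon+s\beta/\epsilon^2$ on that range (since $1/(1-x)\le 1+4x$ for $0\le x=\gamma_N C_N s/\epsilon\le 1/2$), so that \eqref{attractiondeux} follows; alternatively, strengthen the first clause of \eqref{cnavantledebut} by a factor $2$ and your linear majorant argument goes through as written.
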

\begin{Rk}
We can see the sequence $D^{\epsilon-s\frac{\alpha}{2},\frac{C_N}{\epsilon}+s\frac{\beta}{\epsilon^2}}_{N,\chi}$ as a Lyapunov function over sets.
The set $D^{\epsilon,\frac{C_N}{\epsilon}+s\frac{\beta}{\epsilon^2}}_{N,\chi} $ are basins of attraction.
\end{Rk}
The idea of the proof is to show that we control the interaction
bewteen the inner and the outer set over a sufficiently long period of time,
to ensure that the blow-up effectively happens.
\begin{proof}
We show that $\Pi^2_{\In}$ decreases at least linearly on $[t_0,T)$ 
whereas $H_{\In\Out,2}$ remains bounded. Our starting point is the equation \ref{secondmomentdecroissant} of Lemma \ref{evolutionsecondmomentetH}.
$$
\frac12\frac{d}{dt}   \Pi^2_{\In} \leq -  \alpha +\left(2+\frac{2\chi}{\sqrt{N}}\right)\sqrt{\Pi^2_{\In}H_{\In\Out,2}}.
$$
Thus, as long as
\begin{equation}\label{conditiondecroissance}
\sqrt{\Pi^2_{\In}H_{\In\Out,2}} \leq \frac12 \frac{\alpha}{\left(2+\frac{2\chi}{\sqrt{N}}\right)}\, ,
\end{equation}
we get
\begin{equation}\label{hdirectzero}
\frac12\frac{d}{dt} \Pi^2_{\In} \leq -\frac{\alpha}{2} \, .
\end{equation}
Integrating \ref{hdirectzero} from $t_0$ to $t_0+s$ we get
$$
0\leq \Pi^2_{\In}\left(t_0+s\right)\le  \Pi^2_{\In}\left(t_0\right) -{\alpha} s \leq \epsilon -\alpha s.
$$
Therefore under the condition \ref{conditiondecroissance} we find an upper bound for the blow-up time $T$.
\begin{equation}\label{bornesuperieurtempsexplosion}
T \leq t_0 + \frac{\epsilon}{\alpha}.
\end{equation}
Naturally, the next step is to prove that starting at time $t_0$ with $X(t_0) \in D^{\epsilon,\frac{C_N}{\epsilon}}_{N,\chi}$ the estimate \eqref{conditiondecroissance} remains true for any $s\in [t_0,T)$. We already know that under the condition \eqref{conditiondecroissance} the second moment decreases, so it suffices to prove that $H_{\In\Out,2}$ remains  bounded
as in \eqref{conditiondecroissance}
up to $T$.

Since $X\left(t_0\right) \in D^{\epsilon,\frac{C_N}{\epsilon}}_{N,\chi}$ we have $$H_{\In\Out,2}^2(t_0)\leq \frac{C_N}{\epsilon}\leq \frac{2C_N}{\epsilon}.$$
Moreover thanks to the equation  \ref{Hcroissantdeux} of Lemma \ref{evolutionsecondmomentetH} we control the growth of $H_{\In\Out,2}$.
$$
\frac{d}{dt} H_{\In\Out,2} \leq C_{4,2}(N)\left(12+14\chi +4N^{1/4} \right)   H_{\In\Out,2}^2= \gamma_N H_{\In\Out,2}^2.
$$
Therefore for any  $s \in[0,\min(T,\frac{\epsilon}{\gamma_N C_N})]$,
$$
H_{\In\Out,2}(t_0+s)\leq \frac{1}{\frac{1}{H_{\In\Out,2}(t)}-\gamma_N s } \leq \frac{1}{\frac{\epsilon}{C_N}-\gamma_N s }.
$$

Consequently for any $s \leq \frac{\epsilon}{2C_N\gamma_N}=\frac{\epsilon}{2C_N C_{4,2}(N)\left(12+14\chi +4N^{1/4} \right)}$,
\begin{equation}\label{sassezpetit}
H_{\In\Out,2}(t_0+s) \leq \frac{2C_N}{\epsilon}.
\end{equation}

According to \eqref{conditiondecroissance} and \eqref{bornesuperieurtempsexplosion}, to conclude the proof it is enough to ensure that
$$
\sqrt{2C_N} \leq \frac12 \frac{\alpha}{\left(2+\frac{2\chi}{\sqrt{N}}\right)}.
$$
and
$$
\frac{\epsilon}{\alpha} \leq  \frac{\epsilon}{2C_{4,2}\left(12+14\chi +4N^{1/4} \right)C_N}.
$$
This is precisely the definition of $C_N$ \eqref{cnavantledebut}.

At time $T$, two cases may appear: either the variance of the $\In$ particles is equal to $0$ or it is positive. In the latter case there should exist other weak blow-up sets by the very definition of $T$.
Let  $\In_0$ be one of them. Since $H_{\In\Out,2}$ remains bounded and $\In_0$ is connected, then provided that $\Pi^2_{\In}(T)>0$, the following intersection is empty: $\In_0\cap \In=\emptyset$. In this case none of the particles of $\In$ contributes to the blow-up. This is item $(i)$ of Theorem \ref{Thm:stability}.

On the other hand, if $\Pi^2_{\In}(T)=0$, then the bound on $H_{\In\Out,2}$ and Proposition \ref{prop:weakisstrong} imply that $\In$ is a strong blow-up set. This is item $(ii)$ of Theorem \ref{Thm:stability}.
%



Finally, if there exist $i_0\in \In$ with $\liminf_{t\to T^- } |X_{i_0+1}-X_{i_0}| =0$, we are in the second case of the alternative: $\In$ is a strong blow-up set. It concludes the proof of Theorem \ref{Thm:stability}.
\end{proof}
\begin{Rk}
We do not consider the specific case where $\chi=\chi_N^{k}$ for some $k$. Although blow-up occurs in finite time (except for $k = N$), the variance of $k$ particles does not necessarily decrease. 

\end{Rk}
\begin{Rk}
It is straightforward  to construct an open set of initial data for the system \eqref{flotgradientdiscretexplicite}--\eqref{flotgradientdiscretexplicite-boundary} such that the method of Theorem \ref{Thm:stability} leads to strong blow-up with exactly $k$ particles. For example we can alternate subsets of $k$ particles lying in $D^{\epsilon,\frac{C_N}{\epsilon}}$ and subsets of less than $k$ particles. The latter cannot contribute to the blow-up according to Proposition \ref{blowupaumoinsk}.

%
\end{Rk}

\section{Rigidity}\label{sec:rigidity}
In this Section we demonstrate that 
the blow-up process including $k$ particles is rigid in the following sense: particles in the inner set $\In$ blow-up all together with the same rate, whereas particles in the outer set $\Out$ stay away from the blow-up point.

\subsection{The rescaled sytem}

Recall the parabolic rescaling that is performed in order to capture the blow-up profile:
\begin{equation}\label{renormalisationN}
Y\left(\tau\left(t\right)\right)= \frac{X\left(t\right)-\bar X}{R\left(t\right)},
\end{equation}
where 
$R\left(t\right)=\sqrt{2\alpha\left(T- t\right)}$ and
$\tau\left(t\right)=-\frac{1}{\alpha}\log\left(\frac{R\left(t\right)}{R\left(0\right)}\right)$.
The particle system \eqref{flotgradientdiscretexplicite}--\eqref{flotgradientdiscretexplicite-boundary} rewrites in rescaled variables as follows,
\begin{equation}\label{flotgradientdiscretr}
\begin{array}{lll}
\dot Y\left(\tau \right) = -\nabla \E^{\rm resc} \left(Y\left(\tau \right)\right) &
 Y\left(0\right)= Y^0 &.
\end{array}
\end{equation}
where
\begin{equation}\label{energydiscretr}
\E^{\rm resc}\left(Y\right)=-\sum_{i=1}^{N-1} \log\left(Y_{i+1}-Y_i\right) +\chi h_N\sum_{1\le i\neq j\le N}  \log \vert Y_i-Y_j \vert -\frac{\alpha}{2}|Y|^2.
\end{equation}
We can write it explicitly, with the convention $\frac{1}{Y_{1}-Y_0}=\frac{1}{Y_{N+1}-Y_N}=0$ :
\begin{equation}\label{flotgradientrNd}
\dot Y_i =-\frac{1}{Y_{i+1}-Y_i} + \frac{1}{Y_{i}-Y_{i-1}}+  2\chi
h_N\sum_{j \neq i } \frac{1}{Y_j-Y_i}+  \alpha Y_i.
\end{equation}
The center of mass $c_y =\sum_1^n Y_i $ satisfies $\dot c_y=\alpha c_y$. We cannot restrict ourselves to $c_y\left(0\right)=0$, since its value is determined through the knowledge of $\bar X$. 

\subsection{Preliminary estimates}
\begin{Thm}\label{estimationbornesuperieur}
Let $X$ be a solution of \eqref{flotgradientdiscretexplicite}--\eqref{flotgradientdiscretexplicite-boundary} 
Assume there exists a strong blow-up set of $k$ particles (for data in the basin of stability \eqref{attractiondeux} for instance).
We denote by $T$ the blow-up time, $\bar X$ the blow-up point and $Y$ the rescaled solution given by \eqref{renormalisationN}. Then there exists $A>0$ 
such~that for any $\tau>0$: 
\begin{enumerate}
\item  $ \frac{1}{A^2} \leq \| Y(\tau) \|^2_{l^2(\In)} \leq A^2 $, \quad  $ \frac{1}{A^2} \leq P^2_\In(\tau) \leq A^2 $,
\item  $\forall i   \in \In \quad \left| Y_i\left( \tau \right) \right| \leq A$,
\item $\forall (i,j) \in \In\times \In \quad  \frac1A\leq {|Y_{i}\left(\tau \right)-Y_{j}\left(\tau \right)| }\leq A$,
\item $\forall j \in \Out \quad \left| Y_j\left( \tau \right) \right| \geq \frac{1}{A\sqrt{2\alpha T}}  e^{\alpha \tau} $.
\end{enumerate}
\end{Thm}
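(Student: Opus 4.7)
The strategy is to treat the four items in the order (4), (2), (1), (3), leveraging the strong blow-up hypothesis to control the exterior potential $H_{\In\Out,2}$ and then applying the ODE estimates from Section~\ref{sec:secondmomentesti}. First, since $\In$ is a strong blow-up set, there exists $c>0$ with $|X_j-X_i|\geq 1/c$ for every $i\in\In$, $j\in\Out$, uniformly in $t\in[0,T)$. Combined with $X_i\to\bar X$ (Proposition~\ref{prop:weakisstronweak}), this yields $|X_j-\bar X|\geq 1/(2c)$ for $t$ near $T$, and the rescaling $R(t)=\sqrt{2\alpha T}\,e^{-\alpha\tau}$ immediately gives item (4). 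For item (2) and the upper bound of item (1), the strong blow-up property also forces $H_{\In\Out,2}\leq C$ uniformly. Inserting $|\In|=k$ (so that $p(1-\chi/\chi_N^{p+1})=-\alpha$) into \eqref{secondmomentdecroissantbar} and using $\bar\Pi^2_\In\to 0$, the error term vanishes in the limit and one is left with $\frac{d}{dt}\bar\Pi^2_\In=-2\alpha+o(1)$. Integrating from $t$ to $T$ and using $\bar\Pi^2_\In(T^-)=0$ yields $\alpha(T-t)\leq\bar\Pi^2_\In(t)\leq 3\alpha(T-t)$ for $t$ close to $T$, i.e.\ $\frac12\leq\|Y\|^2_{\ell^2(\In)}\leq\frac32$, and continuity extends this to all $\tau\geq 0$. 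In particular $|Y_i|\leq\|Y\|_{\ell^2(\In)}\leq A$ and $P^2_\In\leq\|Y\|^2_{\ell^2(\In)}\leq A^2$.

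The lower bound in item (1) is the delicate point and requires showing $\bar Y_\In(\tau)\to 0$. Summing~\eqref{flotgradientdiscretexplicite} over $i\in\In$, the discrete Laplacian contribution telescopes to two boundary differences with neighbors outside $\In$, the within-$\In$ interaction cancels by antisymmetry, and the remaining $\In$-$\Out$ interaction is bounded since outer particles stay $1/c$-far. Hence $|\frac{d}{dt}\bar X_\In|\leq C$, which integrates to $|\bar X_\In(t)-\bar X|\leq C(T-t)$ and yields
\[
|\bar Y_\In(\tau)|=\frac{|\bar X_\In-\bar X|}{R(t)}\leq C'\sqrt{T-t}=O(e^{-\alpha\tau}),
\]
so $\bar Y_\In(\tau)\to 0$ as $\tau\to\infty$. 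The Pythagorean identity $P^2_\In=\|Y\|^2_{\ell^2(\In)}-k\bar Y_\In^2$ then gives $P^2_\In\geq 1/4$ for $\tau$ large, and continuity handles bounded $\tau$.

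For item (3), the upper bound $|Y_i-Y_j|\leq 2A$ is immediate from item (2). For the lower bound I invoke the rescaled induction Proposition~\ref{induction} as in the proof of Proposition~\ref{prop:weakisstrong}: with $q=l$, $p=l+k-1$, we have $|Y_l-Y_{l-1}|=|X_l-X_{l-1}|/R\to\infty$ and $P^2_\In\geq 1/A^2$ (just established), so the descent step applies. Iterating it down to $p=l+1$ yields a lower bound on $(Y_{l+1}-Y_l)^2$; a symmetric application from the right bounds $(Y_{l+k-1}-Y_{l+k-2})^2$ from below; and a bootstrap, using the freshly proved interior lower bounds as seeds for new runs of the induction, extends the control to every adjacent pair in $\In$. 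Since the particles are ordered, non-adjacent distances dominate adjacent ones, which completes item (3). The main obstacle throughout is the lower bound on $P^2_\In$: it relies on the non-trivial observation that, even in rescaled coordinates, the inner mean $\bar Y_\In$ does not drift but genuinely converges to $0$, a consequence of the $O(T-t)$ approximation of $\bar X$ by $\bar X_\In$ that would fail without the bounded exterior interactions.
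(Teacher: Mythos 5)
Your proposal is correct and follows essentially the same route as the paper: the strong blow-up hypothesis to bound $H_{\In\Out,2}$, the asymptotics $\bar \Pi^2_{\In}\sim 2\alpha(T-t)$ from \eqref{secondmomentdecroissantbar} for items (1)--(2), the triangle inequality with \eqref{bordsontgrand} for item (4), and iterated applications of the descent step of Proposition \ref{induction} for the lower bound in item (3). Your only departures are organizational and harmless: you obtain the lower bound on $P^2_{\In}$ from $\bar Y_{\In}\to 0$ (via $|\tfrac{d}{dt}\bar X_{\In}|\leq C$) and the identity $P^2_{\In}=\|Y\|^2_{l^2(\In)}-k\,\bar Y_{\In}^2$, where the paper simply reruns the same ODE argument with $\Pi^2_{\In}$ in place of $\bar\Pi^2_{\In}$, and you replace the paper's reinitialization step by a mirrored descent plus bootstrap, which is legitimate by the reflection symmetry of the system and yields the same alternation of lower bounds on the partial variances and adjacent gaps; note also that, like the paper, your argument for items (3)--(4) is really established for $\tau$ large, with bounded $\tau$ handled by continuity on compact time intervals.
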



This theorem means that, when zooming around $\bar X$ with the parabolic rate $\sqrt{2\alpha(T -t)}$, the inner particles remain bounded, whereas the outer particles are sent to $\infty$, see Figure \ref{explosionkpamisnreg} for a numerical illustration. The third estimate has an important consequence: the free energy of the inner set in the rescaled frame is bounded from below. 

\begin{Rk}
Statements of Theorem \ref{estimationbornesuperieur} are stronger than the maximum principle.
\end{Rk}

\begin{proof}[Proof of Theorem \ref{estimationbornesuperieur} ]
We split the proof into several estimates, corresponding to the different items of Theorem \ref{estimationbornesuperieur}.

\subsection*{Estimate 1- The squared distance to the blow-up point is estimated from above and below.}
We begin with the first estimate. Notice that $ \| Y(\tau) \|^2_{l^2(\In)}=\frac{\bar\Pi^2_\In(t)}{2\alpha\left(T-t\right)}$ and $P^2_\In(\tau)=\frac{\Pi^2_\In(t)}{2\alpha\left(T-t\right)}$.
By \ref{secondmomentdecroissantbar} of Lemma \ref{evolutionsecondmomentetH}, with $p+1=k$ and $\alpha =- \left(k-1\right)\left(1-\frac{\chi}{\chi_N^k}\right)$. We have
$$
|\frac12\frac{d}{dt} \bar\Pi^2_\In +\alpha| \leq \left(2+\frac{2\chi}{\sqrt{N}}\right)\sqrt{\bar\Pi^2_\In H_{\In\Out,2}}.
$$
Since $H_{\In\Out,2}$ is bounded and $\bar\Pi^2_\In \to 0$ as  $t \to T $, we have
\[
\frac{d}{dt} \bar \Pi^2_\In(t) \underset{t\to T}{\longrightarrow} -2\alpha. 
\]
It gives $\bar \Pi^2_\In(t) \sim {2\alpha\left(T-t\right)}$ as $t\to T$. We deduce the existence of $A$ as claimed in (1).
The proof is exactly the same when $\bar\Pi^2_\In$ is  replaced by $\Pi^2_\In$.
\subsection*{Estimate 2- In the blow-up set the rescaled solution is bounded from above.}
It is a straightforward consequence of item $(1)$ of Theorem \ref{estimationbornesuperieur}:
\begin{equation}\label{estimation1}
\forall i \in \In, \quad  \left|  Y_i\right| \leq\| Y \|_{l^2(\In)} \leq A.
\end{equation}
\subsection*{Estimate 4- the rescaled particles in the outer set go to infinity.}
We prove estimate $4$ now as it is a prerequisite for the proof of the third estimate. The key tool is the upper bound on $H_{\In\Out,2}$.
By hypothesis $\In$ is a strong blow-up set\vc{,} thus there exists $A$ such that 
\begin{equation}\label{bordsontgrand}
\min \left( |Y_{l+k}-Y_{l+k-1}|,|Y_{l}-Y_{l-1}| \right)  \geq \frac{1}{\sqrt{2\alpha\left(T-t\right)}A}.
\end{equation}
In particular \eqref{bordsontgrand} says that both $|Y_{l+k}-Y_{l+k-1}|$ and $|Y_{l}-Y_{l-1}|$ are bounded from below. This remark will be useful during the proof of the third estimate.
The second estimate implies $\max \left( |Y_{l+k-1}|,|Y_{l}| \right)\leq A$. So taking $A$ larger if needed we find
\begin{equation*}
\forall j \in \Out \quad \left| Y_j \right| \geq \min \left( |Y_{l+k}|,|Y_{l}| \right)\geq \frac{1}{\sqrt{2\alpha\left(T-t\right)}A}= \frac{1}{A\sqrt{2\alpha T}}  e^{\alpha \tau}.
\end{equation*}
In particular all the rescaled particles in $\Out $ are sent to infinity. This fact is also true for a weak blow-up set.
\subsection*{Estimate 3a- the rescaled relative distances in the inner set are bounded from above.}

This estimate is an immediate consequence of  \eqref{estimation1}, 
\begin{equation}\label{estimationrelatifpardessus}
\forall \left(i,j\right) \in \In \times \In, \quad |Y_{i}- Y_j| \leq |Y_{i}|+|Y_{j}| \leq 2 A.
\end{equation}
\subsection*{Estimate 3b- the relative distances in the inner set are bounded from below.}
This \vc{} is the core of our rigidity Theorem. Together with the estimate 3a it expresses that the particles blow-up with the same rate, homogeneously inside the inner set. 
Equipped with the induction Proposition \ref{induction} we are ready to prove the estimate 3b.
The strategy is to isolate the left-most relative distance with the descent step of Proposition induction: this is the local induction.
Then we exclude the left-most particle with the reinitialization step and repeat the local induction.
Step by step we bound from below every relative distance.
We recall that $\In=[l,l+k-1]$.
\subsection*{Step 1- A lower bound for $|Y_{l+1}-Y_{l}|$: the local induction.}
By Theorem \ref{estimationbornesuperieur}(1), we know that $P^2_{\In}$ is bounded from below by $\frac{1}{A^2}$. On the other hand, we deduce from \eqref{bordsontgrand} that  $|Y_l-Y_{l-1} |\geq \frac{1}{A}$ for $t$ close enough to $T$. These are exactly the conditions for applying the descent step in Proposition \ref{induction}, with $q=l$ and $p=l+k-1$. This yields $\bar B_1>A>0$ such that $P^2_{l,l+k-2}\geq \frac{1}{\bar B_1^2}$. Since $|Y_l-Y_{l-1}| \geq \frac{1}{A} \geq \frac{1}{\bar B_1}$ we can repeatedly apply the same descent step down to $p=l+2$. As a consequence we obtain, in the last iteration, a lower bound, say $\frac{1}{\bar B^2}$, for $P^2_{l,l+1}$. We deduce immediately a lower bound for $|Y_{l+1}-Y_{l}|$:
\begin{align}\label{pourreinitialiser}
\left(  Y_{l+1}-Y_{l} \right)^2 
\geq P^2_{l,l+1}
\geq \frac{1}{\bar B^2}.
\end{align}
\subsection*{Step 2- Not so fast: reinitialization.}
After the first step, it would be natural to exclude the left-most  particle: $Y_l$, and start over the local induction. In fact, this is a delicate issue as we have no information about $P^2_{l+1,l+k-1}$.
This is the reason why the reinitialization step is needed.

For this purpose we use the second information contained in the inequality \eqref{bordsontgrand}, namely: $|Y_{l+k}-Y_{l+k-1} |$ is bounded from below. On the other hand, $\left|  Y_{l+1}-Y_{l} \right|$ is bounded from below also \eqref{pourreinitialiser}.
Therefore the conditions of the reinitialization step in Proposition \ref{induction}  are fulfilled, with $q=l+1$ and $p=l+k-1$. The outcome of the reinitialization step is the required lower bound on $P^2_{l+1,l+k-1}$.

\subsection*{Step 3- Yes we can: The global induction.} We explain here the global induction step.
After the  reinitialization step we can exclude the left-most particle: $Y_l$. By induction on the left-most particle, say $Y_q$, we successively alternate between local induction and reinitialization to exclude $Y_q$, from $q = l$, up to $q=l+k-2$.
In doing so we obtain as a byproduct \eqref{pourreinitialiser} that there exists $B>0$ such that:
$$\forall i\in \In \setminus \left\lbrace l+k-1 \right\rbrace, \quad |Y_{i+1}-Y_i|\geq \frac{1}{B}. $$ This implies the estimate 3b and concludes the proof of Theorem \ref{estimationbornesuperieur}.
\end{proof}

\subsection{Towards a Liouville Theorem}
It is an immediate consequence of Theorem \ref{estimationbornesuperieur} that the rescaled system \eqref{renormalisationN} satisfies the following conditions:
\begin{enumerate}[(R1)]
\item $Y$ is define for all nonnegative time.
\item $\forall i \in \In \quad   Y_i \leq A$.
\item $\forall (i,i+1) \in \In \times \In \quad \left( Y_{i+1}-Y_i\right) \geq \frac{1}{A}.$
\item $\forall j \in  \Out = \left[1,N\right]\setminus \In  \quad | Y_i| \underset{\tau \to+\infty}{\longrightarrow} +\infty$.
\item $\forall \tau \in \R^+ \quad \H_{\In\Out,2}(\tau )\leq A^2 e^{-2\alpha \tau}.$
\end{enumerate}

\begin{Def}[The local rescaled energy functional]\label{energiefonctionnellepartielle}
As usual we fix an inner set of $k$ particles: $\In=[l,l+k-1]$. We define $\E^{\rm resc}_{k}$ by:
$$
\E^{\rm resc}_{k}\left( Y \right)=-\sum_{i \in \In \setminus \lbrace l+k-1 \rbrace  } \log \left(Y_{i+1}-Y_i\right) +\chi h_N \sum_{ (i, j)\in \In \times \In \setminus \lbrace i \rbrace}  \log \vert Y_i-Y_j \vert -\frac{\alpha}{2}  \sum_{i \in \In} \left| Y_i \right|^2.
$$
\end{Def}
This is the rescaled energy restricted to the inner set. Under the rescaled conditions (R1-R5) above, the local energy is bounded from above and below.
We \vc{} have to introduce a technical condition.
We will restrict ourselves to the case where any blow-up set is a strong blow-up set made of $k$ particles.
In this case, according to Theorem \ref{estimationbornesuperieur}, the rescaled solution $Y$ given by \eqref{renormalisationN} satisfies the following condition:
\begin{enumerate}[(R6)]
\item There exists $A>0$ such that for any $i\neq j$, $|Y_i-Y_j|\geq \frac{1}{A}$.
\end{enumerate}

%
We are now ready to give a precise version of  Theorem \ref{Thm:rigidityintro} for the rigidity.
\begin{Thm}\label{kenergydecroit}
Let $Y$ be a solution of the differential equation \eqref{flotgradientdiscretr} satisfying the conditions (R1-R6) then
\begin{itemize}
\item for any $i\in \In$, $\dot Y_i(\tau)\to 0$ as $\tau \to \infty$.
\item  $\E^{\rm resc}_{k}\left(Y(\tau)\right)$ converges to a limit noted $e_\infty$ as $\tau \to \infty$.
\item $\left(\nabla \E^{\rm resc}_{k}\right)\left(Y(\tau)\right) \to 0$ as $\tau \to \infty$.
\end{itemize}

\end{Thm}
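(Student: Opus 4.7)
The gradient flow is driven by $\E^{\rm resc}$ rather than by $\E^{\rm resc}_k$, so the plan is to view $\E^{\rm resc}_k$ as an \emph{approximate} Lyapunov function whose defect is controlled by the exponentially small outer potential $\H_{\In\Out,2}$. First, for every $i\in\In$ I would decompose
\begin{equation*}
\partial_i\E^{\rm resc}(Y)=\partial_i\E^{\rm resc}_k(Y)+R_i,
\end{equation*}
where $R_i$ collects precisely the terms involving outer particles: the sum $2\chi h_N\sum_{j\in\Out}(Y_i-Y_j)^{-1}$, together with, for $i=\min\In$ or $i=\max\In$, the boundary log-term $\pm(Y_{i\pm1}-Y_i)^{-1}$ that appears in $\E^{\rm resc}$ but not in $\E^{\rm resc}_k$. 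A Cauchy--Schwarz estimate then yields $\sum_{i\in\In}|R_i|^2\le C\,\H_{\In\Out,2}$, and condition (R5) upgrades this to $\|R(\tau)\|^2\le C'e^{-2\alpha\tau}$.

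Second, setting $\varphi(\tau):=\E^{\rm resc}_k(Y(\tau))$ and differentiating along the flow, one finds
\begin{equation*}
\varphi'(\tau)=-\sum_{i\in\In}\partial_i\E^{\rm resc}_k(Y)\,\partial_i\E^{\rm resc}(Y)=-\|\nabla_\In\E^{\rm resc}_k\|^2-\langle\nabla_\In\E^{\rm resc}_k,R\rangle,
\end{equation*}
so by Young's inequality
\begin{equation*}
\varphi'(\tau)+\tfrac12\|\nabla_\In\E^{\rm resc}_k(Y(\tau))\|^2\le\tfrac12\|R(\tau)\|^2\le Ce^{-2\alpha\tau}.
\end{equation*}
Conditions (R2), (R3), (R6) confine $(Y_i)_{i\in\In}$ to a compact set bounded away from the singular locus of $\E^{\rm resc}_k$, so $\varphi$ stays bounded on $[0,\infty)$. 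Integrating the above inequality over $[0,\tau]$ will yield simultaneously $\|\nabla_\In\E^{\rm resc}_k\circ Y\|^2\in L^1(\R_+)$ and, via the representation $\varphi(\tau)=\varphi(0)+\int_0^\tau\varphi'(s)\,ds$ with integrable right-hand side, the existence of a finite limit $e_\infty:=\lim_{\tau\to\infty}\varphi(\tau)$.

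Third, the $L^1$ bound on $\|\nabla_\In\E^{\rm resc}_k(Y(\cdot))\|^2$ must be upgraded to pointwise decay. On the compact region determined by (R2), (R3), (R6), the map $Y\mapsto\nabla\E^{\rm resc}_k(Y)$ is smooth hence Lipschitz, and each $\dot Y_i$ for $i\in\In$ is uniformly bounded in $\tau$: inspecting \eqref{flotgradientrNd}, every summand is controlled either by (R3), (R6) for interactions inside $\In$ or by (R4) for interactions with $\Out$. Consequently $\tau\mapsto\|\nabla_\In\E^{\rm resc}_k(Y(\tau))\|^2$ is uniformly continuous, and any nonnegative, integrable, uniformly continuous function must vanish at infinity, giving $\nabla\E^{\rm resc}_k(Y(\tau))\to 0$. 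Returning to the ODE, $\dot Y_i(\tau)=-\partial_i\E^{\rm resc}_k(Y(\tau))-R_i(\tau)\to 0$ for every $i\in\In$, since both right-hand terms vanish. The main technical point --- and the only place where the full strength of the rigidity hypotheses (R3) and (R6) is needed --- is the uniform separation of the inner particles that produces the compactness required to convert $L^1$ integrability into pointwise decay.
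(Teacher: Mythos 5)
Your proposal is correct and follows essentially the paper's own route: the decomposition $\nabla_i\E^{\rm resc}(Y)=\nabla_i\E^{\rm resc}_{k}(Y)+R_i$ with $\|R\|_{l^2(\In)}\le Ce^{-\alpha\tau}$ via (R5) is exactly Proposition \ref{procheflotgradk}, and the remainder of your argument is the same approximate-Lyapunov differential inequality, integration using the boundedness of $\E^{\rm resc}_{k}$ on the region fixed by (R2)--(R3)--(R6), and a Barbalat-type upgrade from $L^1$ integrability to pointwise decay, followed by reading off the remaining bullet from the ODE. The only cosmetic difference is that you apply the uniform-continuity step to $\|\nabla\E^{\rm resc}_{k}(Y(\tau))\|^2$ (Lipschitz bound on the compact set plus bounded $\dot Y_i$, $i\in\In$), whereas the paper applies it to $\|\dot Y\|^2_{l^2(\In)}$ after explicitly bounding $\ddot Y_i$ in its Step 1 --- the two are equivalent up to the exponentially small remainder.
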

Theorem \ref{kenergydecroit} is quite unsatisfactory since it would be natural to expect that $Y(\tau)$ converges (without extracting subsequences) to a critical point of the rescaled energy $\E^{\rm resc}_{k}$. For this purpose it would be interesting to gain more information about the solutions of \eqref{flotgradientrNd} which are defined up to $\tau = +\infty$, in the spirit of the Liouville Theorem in \cite{GK85}. We aim to develop an argument based on the Loyasiewicz inequality, from the theory of gradient flows of analytical energies. However we face technical difficulties and we leave it for future work. Another way to conclude would be to get a better description of the critical points of the functional $\E^{\rm resc}_{k}$. According to the case of three particles in appendix we believe that there is only a finite number of critical points. This would be enough to prove a Liouville Theorem.


Before proving Theorem \ref{kenergydecroit} we remark that a rescaled solution behaves almost like a solution of the local gradient flow.
\begin{Prop}\label{procheflotgradk}
Let $Y$ solution of the differential equation \eqref{flotgradientdiscretr} satisfying the rescaled condition (R1-R5) then there exists $C>0$ such that
\[
\forall \tau>0, \quad \| \nabla \E^{\rm resc}_{k}\left((Y_i)_{i\in \In}\right) -\left( \nabla_i \E^{\rm resc} (Y)\right)_{i\in \In} \|_{l^2(\In)} \leq C e^{-\alpha \tau}.
\]
\end{Prop}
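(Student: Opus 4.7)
The plan is to compute the componentwise difference $\nabla \E^{\rm resc}_{k}((Y_i)_{i\in \In}) - (\nabla_i \E^{\rm resc}(Y))_{i\in \In}$ directly, observe that every surviving term either is one of the two boundary neighbour distances $1/(Y_{l}-Y_{l-1})$, $1/(Y_{l+k}-Y_{l+k-1})$ or is an inner-outer interaction $1/(Y_j-Y_i)$ with $i\in \In$, $j\in \Out$, and then control all these contributions by $\H_{\In\Out,2}$. The exponential bound $\H_{\In\Out,2}(\tau)\le A^{2}e^{-2\alpha \tau}$ furnished by condition (R5) will finish the job.

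First, I would write both gradients out explicitly. At $i\in \In$ the full rescaled gradient reads
\[
\nabla_i \E^{\rm resc}(Y)=\frac{1}{Y_{i+1}-Y_i}-\frac{1}{Y_{i}-Y_{i-1}}- 2\chi h_N\sum_{j\neq i,\,1\le j\le N}\frac{1}{Y_j-Y_i} -\alpha Y_i,
\]
with the standard convention that the neighbour term involving $Y_{l-1}$ (resp.\ $Y_{l+k}$) is absent when $l=1$ (resp.\ $l+k-1=N$). The gradient of $\E^{\rm resc}_{k}$ has the same shape except that the interaction sum runs only over $j\in \In\setminus\{i\}$ and the two neighbour terms $1/(Y_{l}-Y_{l-1})$ and $1/(Y_{l+k}-Y_{l+k-1})$ are removed (since the corresponding $\log$-contributions do not appear in $\E^{\rm resc}_{k}$). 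Subtracting, the difference at a strictly interior $i\in \In\setminus\{l,l+k-1\}$ is exactly $-2\chi h_N\sum_{j\in \Out}1/(Y_j-Y_i)$, while at the left endpoint $i=l$ one picks up in addition the single boundary term $-1/(Y_l-Y_{l-1})$, and symmetrically $1/(Y_{l+k}-Y_{l+k-1})$ at $i=l+k-1$.

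Second, I would estimate the $\ell^{2}(\In)$-norm of these differences. Cauchy--Schwarz applied to each inner-outer sum gives
\[
\sum_{i\in \In}\Bigl|\sum_{j\in \Out}\frac{1}{Y_j-Y_i}\Bigr|^{2}\le |\Out|\,\H_{\In\Out,2}(\tau),
\]
and the two endpoint corrections $1/(Y_{l}-Y_{l-1})^{2}$ and $1/(Y_{l+k}-Y_{l+k-1})^{2}$ are themselves individual summands of $\H_{\In\Out,2}(\tau)$, hence are dominated by it. Collecting everything produces a constant $C'=C'(N,\chi)$ such that
\[
\bigl\|\nabla \E^{\rm resc}_{k}((Y_i)_{i\in \In}) - (\nabla_i \E^{\rm resc}(Y))_{i\in \In}\bigr\|_{\ell^{2}(\In)}^{2}\le C'\,\H_{\In\Out,2}(\tau).
\]

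Finally, plugging in (R5) yields the announced bound with $C:=A\sqrt{C'}$. I do not expect any serious obstacle: the result is essentially bookkeeping on the explicit gradient formulas combined with the quantitative decay of the exterior potential already established in Theorem \ref{estimationbornesuperieur}. The only point demanding mild care is keeping the two endpoint contributions at $i=l$ and $i=l+k-1$ separate from the generic interior ones, and handling the degenerate subcases $l=1$ or $l+k-1=N$ (for which one of the boundary summands is simply absent).
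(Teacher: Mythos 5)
Your argument is correct and is essentially the paper's own proof: the difference of the two gradients is identified componentwise as the two boundary neighbour terms (present only at $i=l$ and $i=l+k-1$) plus the inner--outer interaction $-2\chi h_N\sum_{j\in \Out}1/(Y_j-Y_i)$, and both are controlled through $\H_{\In\Out,2}$ together with the decay (R5). The only cosmetic difference is that the paper bounds each component pointwise (Cauchy--Schwarz in $j$) and you aggregate directly in $\ell^2(\In)$, which changes nothing of substance.
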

\begin{proof}
From condition (R5) there exists $A$ such that $\H_{\In\Out,2}(\tau)\leq A^2 e^{-2\alpha \tau} $. Then we compute for any $i\in \In=[l,l+k-1]$:
\begin{align*}
\left| \nabla_i \E^{\rm resc}_{k}\left(Y\right) - \nabla_i \E^{\rm resc} (Y) \right| &= \left| -\frac{\delta_{i,l}}{Y_l-Y_{l-1}}+\frac{\delta_{i,l+k}}{Y_{l+k+1}-Y_{l+k}} - 2 \chi h_n  \sum_{k \in \Out } \frac{1}{Y_k-Y_i}  \right|\\
&\leq \left(2 +\frac{2\chi}{\sqrt{N}}\right) A e^{-\alpha \tau}.
\end{align*}
\end{proof}

\begin{proof}[Proof of Theorem \ref{kenergydecroit}]
This proof is divided into two steps.
\subsection*{Step 1-} Under the hypotheses of Theorem \ref{kenergydecroit},
there exists $C>0$ such that,

\[\forall \tau>0, \quad
\|\dot Y_i \|_{l^\infty(\In)} \leq C , \quad
\|Y_j \|_{l^\infty(\Out)}\leq C e^{\alpha \tau} , \quad
\|\dot Y_j \|_{l^\infty(\Out)}\leq C e^{\alpha \tau} , \quad
\|\ddot Y_i \|_{l^\infty(\In)} \leq C .
\]

First, for all $i\in \In$, we have,
\begin{align*}
\left| \dot Y_i \right| &= \left| -\frac{1}{Y_{i+1}-Y_i} + \frac{1}{Y_{i}-Y_{i-1}}+ 2\chi h_N\sum_{k \neq i } \frac{1}{Y_k-Y_i}+ \alpha Y_i \right| \\
&\leq  2A+    2\chi h_N \sum_{k \in \In \setminus \lbrace i \rbrace  } \frac{1}{Y_k-Y_i}  +2\chi h_N\sum_{k \in \Out } \frac{1}{Y_k-Y_i} +\alpha A \\
&\leq \left(2 + \alpha + 2\chi h_N(k-1)\right) A  +A e^{-\alpha \tau}.
\end{align*}
Secondly, for all $j\in \Out$, we have,
\begin{align*}
\left| \dot Y_j - \alpha Y_j \right|  &=  \left| -\frac{1}{Y_{j+1}-Y_j} + \frac{1}{Y_{j}-Y_{j-1}}+ 2\chi h_N\sum_{k \neq j } \frac{1}{Y_k-Y_j}  \right| \\
&\leq   2A +2\chi h_N\sum_{k \in \Out \setminus \lbrace j \rbrace } \frac{1}{|Y_k-Y_j|} +    2\chi h_N \sum_{k \in \In   } \frac{1}{|Y_k-Y_j|}  \\
&\leq \left(2 + \alpha + 2\chi h_N(N-k+1)\right) A  +A e^{-\alpha \tau}.
\end{align*}
Taking $C$ large enough, the Gronwall Lemma yields $\left| Y_j \right| \leq C e^{\alpha \tau}$. By triangular inequality $\left|\dot  Y_j \right| \leq C e^{\alpha \tau}$. \\
Finally, we compute $\ddot Y_{i}$ for $i \in \In$,
\begin{align*}
\left| \ddot Y_{i} \right| &=  \vc{} \left| \frac{d}{d \tau} \left( -\frac{1}{Y_{i+1}-Y_i} + \frac{1}{Y_{i}-Y_{i-1}}+ 2\chi
h_N\sum_{k \neq i } \frac{1}{Y_k-Y_i}+ \alpha Y_i \right) \right|\\
&=\left|  \frac{\dot Y_{i+1}-\dot Y_i}{(Y_{i+1}-Y_i)^2} - \frac{\dot Y_{i}- \dot Y_{i-1}}{(Y_{i}-Y_{i-1})^2}- 2\chi h_N\sum_{k \neq i } \frac{\dot Y_k- \dot Y_i}{(Y_k-Y_i)^2}+ \alpha \dot Y_i     \right|\\
&\leq C_1\left( \left( 4+ \alpha +2\chi h_N(k-1)\right) A^2  +A^2 e^{-2\alpha \tau}  \right)+\\
&\left|\frac{\dot Y_{l+k}}{(Y_{l+k}-Y_{l+k-1})^2}\right| +\left| \frac{\dot Y_{l-1}}{(Y_{l}-Y_{l-1})^2}\right| + 2\chi h_N\sum_{k \neq i } \left|\frac{\dot Y_k}{(Y_k-Y_i)^2} \right|\\
&\leq C +\left( 2+ \frac{2\chi}{\sqrt{N}} \right) C e^{\alpha \tau}A^2 e^{-2\alpha \tau},
\end{align*}

\subsection*{Step 2- .} The time-derivative of $\E^{\rm resc}_{k}\left( Y \right)$ is estimated, using discrete integration by parts, symmetry and Proposition \ref{procheflotgradk}.  
\begin{align}
\nonumber \frac{d}{d \tau}  \E^{\rm resc}_{k} \left( Y(\tau) \right)  &= \left< \left((\dot Y_i)_{i\in \In}\right),\nabla \E^{\rm resc}_{k}\left((Y_i)_{i\in \In}\right)  \right> \\
\nonumber &=\left< \left((\dot Y_i)_{i\in \In}\right),\left( \nabla_i \E^{\rm resc} (Y)\right)_{i\in \In}  \right> +
\label{nesertarien} \left< \left((\dot Y_i)_{i\in \In}\right),\nabla \E^{\rm resc}_{k}\left((Y_i)_{i\in \In}\right) -\left( \nabla_i \E^{\rm resc} (Y)\right)_{i\in \In}  \right> \\
&\leq -\| \dot Y \|_{l^2(\In)}^2 +  \| \dot Y \|_{l^2(\In)}C e^{-\alpha \tau} \\
\label{nesertarien2} & \leq-\| \dot  Y \|^2_{l^2(\In)}\left(1-  C e^{-\alpha \tau} \right)+Ce^{-\alpha \tau}\, ,
\end{align}
where we used the notation $\| \dot Y \|_{l^2(\In)}^2=\sum_{i \in \In } \dot Y_i^2$.
%
We deduce from \eqref{nesertarien2} the integrability of $\|\dot  Y \|^2_{l^2(\In)}$. We choose $\tau_0$ such that $(\forall \tau \geq \tau_0)\;1-Ce^{-\alpha \tau}\geq \frac{1}{2}$. We get  
\begin{align}
\nonumber \int_{\tau_0}^{\infty} \| \dot Y \|^2_{l^2(\In)} &\leq \limsup_{\tau \to +\infty}\left( \int_{\tau_0}^{t} -2\frac{d}{d \tau}  \E^{\rm resc}_{k}\left( Y(\tau) \right) \right)+Ce^{-\alpha \tau}d\tau\\
\label{nesertarien3} &\leq  2(M-m)+\frac{C}{\alpha },
\end{align}
where $M$ and $m$ are respectively the upper and lower bound of $ \E^{\rm resc}_{k}(Y)$, depending only on $N,\chi,A$ by condition (R3). We deduce from the estimates $\|\ddot Y_i\|_{l^\infty(\In)} \leq C $ 
that $\| \dot Y \|_{l^2(\In)} \to 0$ as $\tau \to \infty$.

We eventually prove the convergence of $\E^{\rm resc}_{k}(Y)$. The inequality \ref{nesertarien} implies:
\begin{align*}
&\left| \frac{d}{d \tau}  \E^{\rm resc}_{k} \left( Y(\tau) \right) \right| \leq \frac{3}{2}\| \dot  Y \|^2_{l^2(\In)} + \frac{C^2}{2}e^{-2\alpha \tau}.
\end{align*}
Therefore $\frac{d}{d \tau}  \E^{\rm resc}_{k}(Y)$ is integrable and there exist $e_\infty$ such that $ \E^{\rm resc}_{k}(Y)\to e_\infty$ as $\tau \to +\infty$.
The last estimate is a straightforward consequence of $\dot Y =\left( \nabla \E^{\rm resc} \right)(Y)$ and Proposition \ref{procheflotgradk}. It concludes the proof of Theorem \ref{kenergydecroit}.
\end{proof}

\section{Conclusion and perspectives}\label{sec:conc}
 We prove a rigidity result for the blow-up of the particle scheme \eqref{flotgradientdiscretexplicite}--\eqref{flotgradientdiscretexplicite-boundary}. More precisely, we are able to quantitatively separate the inner and the outer sets of particles. Interestingly, our rigidity result is obtained under the sole condition that the blow-up sets satisfy the weak condition \eqref{weakblowup} and contains the critical number of particles. Under these conditions, we can develop the induction method (Proposition \ref{induction}), then we deduce Theorem \ref{kenergydecroit}. This is indeed the case when the solution belongs to the basins of stability defined in \ref{Thm:stability}. 


This work opens several perspectives. First, it would be interesting to investigate the continuation of system (1.3)-(1.4) after the blow-up time, following \cite{theseannedevys,Dschm09}. Secondly, we could study more general systems, including a nonlinear diffusion, and a power-law interaction kernel.

\begin{appendices}
\section{The case of three particles as a toy problem}
We study  thoroughly the case of three particles. 
 There are  two possible cases  concerning the blow-up occurence:  either  three or two particles collapse. 
It is convenient to introduce the relative distances: $u_1 = X_2 - X_1$ and $u_2 = X_3 - X_2$. The system \eqref{flotgradientdiscretexplicite}--\eqref{flotgradientdiscretexplicite-boundary} becomes:
\begin{equation}\label{flotgradient3u}
\left\{\begin{array}{ccl}
 \dot u_1 =&\dfrac{2}{u_1} -\dfrac{1}{u_2} &- \quad 2\chi h_3\left(
\dfrac{2}{u_1}-\dfrac{1}{u_2} +\dfrac{1}{u_1+u_2}\right)\smallskip\\
\dot u_2 =&\dfrac{2}{u_2} -\dfrac{1}{u_1} &- \quad 2\chi h_3\left(
\dfrac{2}{u_2}-\dfrac{1}{u_1} +\dfrac{1}{u_1+u_2}\right).
\end{array}\right.
\end{equation}
We assume without loss of generality that $u_2\geq u_1$. By symmetry of the system, and uniqueness of the solutions, the diagonal $\{u_2 = u_1\}$ is invariant by the flow.

We recall that the solution to the system \eqref{flotgradient3u} blows-up in finite time when $\chi>\chi_3 = \frac43$. There is a transition at $\chi = \chi_{3}^2 = 2$: for $\chi_3 < \chi < \chi_3^{2}$ two particles cannot collapse, whereas it is possible for $\chi > \chi_3^{2}$.

\subsection{Three particles collapse} \label{sec:3particles}
First, we consider the intermediate case $\chi_3 < \chi < \chi_3^{2}$. In this case, the blow-up set contains three particles.
\begin{Prop}\label{smbu}
Let $T$ be the blow-up time. We have  $u_1\left(t\right),u_2\left(t\right) \rightarrow 0$ as $t \rightarrow T$. Moreover the ratio $\frac {u_2}{u_1}$ is bounded from above and below.
\end{Prop}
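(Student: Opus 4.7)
The plan is to package this proposition as a direct corollary of the general machinery already developed. In the regime $\chi_3 < \chi < \chi_3^2$, one has $\chi_3^3 = \chi_3 < \chi < \chi_3^2$, so the critical number of particles is $k = 3$.

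First I would argue that the whole set of particles collapses. Since $\chi > \chi_3$, the second-moment identity \eqref{secondmomentlevraidecroit} forces $\Pi^2$ to decrease linearly and vanish at some finite time $T$, so at least one pair of consecutive particles satisfies $\liminf_{t\to T^-}(X_{i+1}-X_i) = 0$. Let $\In$ be a maximal weak blow-up set. By Proposition \ref{blowupaumoinsk}, $|\In|\geq k = 3$; since only three particles are available, necessarily $\In = \{1,2,3\}$. In particular $\liminf_{t\to T^-} u_1 = \liminf_{t\to T^-} u_2 = 0$.

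Next I would upgrade weak convergence to strong convergence via Proposition \ref{prop:weakisstrong}: a weak blow-up set made of $k$ particles is a strong blow-up set. Therefore
\[
\lim_{t\to T^-} u_1(t) = \lim_{t\to T^-} u_2(t) = 0,
\]
which yields the first assertion of the proposition.

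For the ratio bound, I would invoke Theorem \ref{estimationbornesuperieur} with $\In = \{1,2,3\}$ and $\Out = \emptyset$. All statements involving the exterior set or $H_{\In\Out,2}$ are vacuous, and Proposition \ref{prop:weakisstronweak} supplies the blow-up point $\bar X$ needed to define the parabolic rescaling $Y_i(\tau) = (X_i(t)-\bar X)/\sqrt{2\alpha(T-t)}$. Item~(3) of Theorem \ref{estimationbornesuperieur} then gives a constant $A>0$ such that
\[
\frac{1}{A} \leq |Y_{i+1}(\tau) - Y_i(\tau)| \leq A, \qquad i = 1,2.
\]
Since $Y_{i+1}-Y_i = u_i/\sqrt{2\alpha(T-t)}$, the common rescaling factor cancels in the ratio and one obtains
\[
\frac{1}{A^2} \leq \frac{u_2(t)}{u_1(t)} \leq A^2,
\]
which is the second assertion. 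There is no real obstacle here: once the weak-implies-strong rigidity and the rescaled estimates of Section~\ref{sec:rigidity} are in place, the three-particle subcritical case $\chi<\chi_3^2$ is forced into the shape of a fully critical aggregate, and the ratio bound is automatic. The genuinely subtle behaviour is postponed to the regime $\chi > \chi_3^2$, where two particles may collapse while the third escapes and the rescaling has to be carried out on a proper subset.
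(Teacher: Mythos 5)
Your argument is correct, but it is a genuinely different route from the paper's. The paper proves Proposition \ref{smbu} with a short, self-contained computation on the reduced system \eqref{flotgradient3u}: assuming w.l.o.g. $u_2\geq u_1$ (the diagonal is flow-invariant), it computes
\[
\frac{d}{dt}\Bigl(\frac{u_2}{u_1}\Bigr)=\frac{1}{u_1^2}\Bigl[\,2\Bigl(\frac{u_1}{u_2}-\frac{u_2}{u_1}\Bigr)\Bigl(1-\frac{\chi}{\chi_3^{2}}\Bigr)+\chi_3^{2}\,\frac{u_2-u_1}{u_2+u_1}\Bigr],
\]
and since $1-\chi/\chi_3^{2}>0$ this is negative once $u_2/u_1$ is large, so the ratio is trapped; the convergence $u_1,u_2\to 0$ is inherited from the general blow-up discussion. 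You instead specialise the full machinery of the body of the paper to $N=3$, $k=3$: Proposition \ref{blowupaumoinsk} to force the blow-up set to be all three particles, Proposition \ref{prop:weakisstrong} to upgrade weak to strong blow-up (giving the limits, which you actually justify more explicitly than the paper does), and item (3) of Theorem \ref{estimationbornesuperieur} with $\Out=\emptyset$ to get the two-sided bound on $u_2/u_1$ after the parabolic rescaling cancels. This is legitimate and non-circular — the appendix sits downstream of the general results, and the paper itself invokes Theorem \ref{estimationbornesuperieur} in exactly this way for Proposition \ref{estimation3} — provided you note (as you implicitly do) that with an empty outer set the hypotheses involving $H_{\In\Out,2}$ and the extremal gaps $|X_q-X_{q-1}|$ are vacuous under the convention $\frac{1}{X_1-X_0}=\frac{1}{X_{N+1}-X_N}=0$. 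What the two approaches buy: yours exhibits the three-particle case as a clean corollary of the general rigidity theory; the paper's elementary ODE argument is lighter, keeps the toy model self-contained, and gives the sharper structural information (monotonicity of the ratio above a threshold) that is then exploited in the phase-plane analysis of Proposition \ref{liouville1}.
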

\begin{proof}
We show that there exists  $a>0$ such that, if $\frac{u_2}{u_1} \geq a $ then $\frac{u_2}{u_1}$ decreases. Indeed, from \eqref{flotgradient3u}, we get:
\begin{equation*}
\frac{d}{dt} \left(\frac{u_2}{u_1}\right)  =  \frac{1}{u_1^2}\left[ 2\left(\frac{u_1}{u_2} -\frac{u_2}{u_1}\right) \left( 1- \frac{\chi}{\chi_3^{2}} \right)+ \chi_3^{2}\frac{u_2-u_1}{u_2+u_1} \right]\, .
\end{equation*}
Using that $\left( 1- \frac{\chi}{\chi_3^{2}} \right)>0$, we see that $\frac{d}{dt} \left(\frac{u_2}{u_1}\right)<0$ when $ \frac{u_2}{u_1} $ is large enough.
Thus $\frac{u_2}{u_1} $ is bounded  from above, and from below by assumption.
\end{proof}

\subsubsection{Parabolic rescaling}
In the case $\chi_3 < \chi < \chi_3^{2}$ the second moment is linearly decaying, and touches zero exactly at the blow-up time. We rescale the solution in order to fix the second moment to a constant value equal to one, {\em i.e.} we project $X(t)$ on the sphere of radius one. We also rescale time in order to get a solution defined for all time $\tau\geq 0$:
\begin{equation}\label{renormalisation}
Y\left(\tau\left(t\right)\right)= \frac{X\left(t\right) }{R\left(t\right)},
\end{equation}
where $R\left(t\right)=\|X\left(t\right)\|=\sqrt{|X\left(0\right)|^2-2\alpha t}=\sqrt{2\alpha\left(T- t\right)}$, and
$\tau\left(t\right)=-\frac{1}{\alpha}\log\left(\frac{R\left(t\right)}{R(0)}\right)$. Here, $\alpha = 2\left( \frac\chi{\chi_3} - 1\right)$.
%
We define the relative rescaled distances as: $v_1=Y_2-Y_1 $ and $v_2 = Y_3-Y_2$. It satisfies the following system:
\begin{equation}\label{flotgradient3ru}
\left\{\begin{array}{ccl}
 \dot {v_1}  =& \dfrac{2}{v_1}-\dfrac{1}{v_2}&  -\quad 2 \chi h_3
\left(\dfrac{2}{v_1}-\dfrac{1}{v_2}+\dfrac{1}{v_1+v_2} \right) +\alpha v_1\smallskip\\
 \dot {v_2}  =& \dfrac{2}{v_2}-\dfrac{1}{v_1}&  - \quad 2\chi h_3
\left(\dfrac{2}{v_2}-\dfrac{1}{v_1}+\dfrac{1}{v_1+v_2}\right) +\alpha v_2\\
 \end{array}\right.
\end{equation}

Theorem \ref{estimationbornesuperieur} rewrites as follows.
\begin{Prop}\label{estimation3}
In the case $\chi_3 < \chi < \chi_3^{2}$, the solution $(v_1(\tau),v_2(\tau))$ is uniformly bounded from above and below.  
%
\end{Prop}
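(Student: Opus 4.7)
The plan is to derive Proposition \ref{estimation3} as a direct specialization of the general rigidity estimate Theorem \ref{estimationbornesuperieur} to the three-particle configuration. In the regime $\chi_3 < \chi < \chi_3^{2}$ the critical particle number is $k = 3$, and Proposition \ref{smbu} already supplies $u_1, u_2 \to 0$, so the full index set $\In = \{1,2,3\}$ is a weak blow-up set. Because the complementary outer set $\Out$ is empty, the separation condition in the definition of a strong blow-up set is vacuously satisfied, and the limits (not merely liminfs) of the relative distances come from Proposition \ref{smbu}; hence $\In$ is automatically a strong blow-up set of three particles, so the hypotheses of Theorem \ref{estimationbornesuperieur} hold.

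Applying item (3) of Theorem \ref{estimationbornesuperieur} with $\In = \{1,2,3\}$ yields a constant $A > 0$ such that $\tfrac{1}{A} \le |Y_i(\tau) - Y_j(\tau)| \le A$ for all $\tau \ge 0$ and all $i,j \in \In$, where $Y$ denotes the \emph{centered} parabolic rescaling $Y_i = (X_i - \bar X)/\sqrt{2\alpha(T - t)}$ with $\alpha = 2(\chi/\chi_3 - 1)$. Specializing to consecutive indices bounds the centered analogues of $v_1$ and $v_2$ above and below. It remains to reconcile this with the slightly different rescaling \eqref{renormalisation} used in the appendix.

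The appendix rescaling differs from the centered one only in that it does not subtract $\bar X$ and it uses $R(t) = \|X(t)\|$ in place of $\sqrt{2\alpha(T - t)}$. The missing centering is harmless because $v_1 = Y_2 - Y_1$ and $v_2 = Y_3 - Y_2$ are translation-invariant. For the change of scaling factor, conservation of the center of mass $\sum_i X_i$ along \eqref{flotgradientdiscretexplicite}--\eqref{flotgradientdiscretexplicite-boundary} allows me to translate the initial configuration so that $\sum_i X_i(0) = 0$; this does not affect $v_1, v_2$ or the dynamics \eqref{flotgradient3u}--\eqref{flotgradient3ru}, and it forces $\bar X = 0$, whence $R(t)^2 = \bar \Pi^2_\In(t)$. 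The asymptotic $\bar \Pi^2_\In(t) \sim 2\alpha(T-t)$, obtained in the proof of Theorem \ref{estimationbornesuperieur}(1) by combining Lemma \ref{evolutionsecondmomentetH} with the fact that $H_{\In\Out,2} \equiv 0$ in the present setting, shows that $R(t)/\sqrt{2\alpha(T-t)}$ is bounded above and below. The bounds on the centered relative distances therefore transfer to $v_1$ and $v_2$ with an adjusted constant, which is the claim.

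The only genuinely substantive ingredient—hidden inside Theorem \ref{estimationbornesuperieur}(3)—is the induction procedure of Proposition \ref{induction}. In the three-particle case this collapses to a single application of the descent step to bound $P^2_{1,2}$ from below, the upper bounds being immediate from the finiteness of the total variance $P^2_\In$. I do not anticipate any obstacle beyond the routine bookkeeping needed to match the two rescalings; if one prefers to bypass Theorem \ref{estimationbornesuperieur} altogether, the bound $u_2/u_1 \in [c_1, c_2]$ from Proposition \ref{smbu}, combined with the explicit identity $\Pi^2_\In = \tfrac{2}{3}(u_1^2 + u_1 u_2 + u_2^2)$ and the asymptotic $\Pi^2_\In \sim 2\alpha(T-t)$ supplied by \eqref{secondmomentdecroissant} (with $H_{\In\Out,2} = 0$), also yields two-sided control of $v_1$ and $v_2$ directly.
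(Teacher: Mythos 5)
Your proof follows the paper's own route: Proposition \ref{estimation3} is stated there precisely as the specialization of Theorem \ref{estimationbornesuperieur} to $N=3$, $k=3$, $\In=\{1,2,3\}$, $\Out=\emptyset$, and your verification of the hypotheses (strong blow-up of all three particles via Proposition \ref{smbu}, vacuously satisfied separation condition) together with the normalization $\bar X=0$ reconciling the centered rescaling with \eqref{renormalisation} is exactly the bookkeeping the paper leaves implicit. The only slight inaccuracy is in your closing aside: inside Theorem \ref{estimationbornesuperieur}(3) the three-particle case requires the descent step \emph{and} one reinitialization (or, by symmetry, a descent from the right) to bound the second gap $Y_3-Y_2$ from below, not a single descent step -- but since you invoke the theorem as a black box, and since your alternative elementary route via $\Pi^2_{\In}=\tfrac23\left(u_1^2+u_1u_2+u_2^2\right)\sim 2\alpha(T-t)$ combined with the ratio bounds of Proposition \ref{smbu} is also valid, this does not affect the correctness of the argument.
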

We shall see that Proposition \ref{estimation3} enables to determine completely the behaviour of the solutions.
\subsubsection{The blow-up profile}
We aim to describe the explosion behaviour. For this purpose we classify the solutions of \eqref{flotgradient3ru} on the sphere $\|Y\| = 1$. A new transition occurs at $\chi = \bar{\chi} = \frac{16}9 \in (\chi_3, \chi_3^{2})$.
\begin{Prop}\label{liouville1}
If $\chi_3< \chi\leq \bar{\chi}$, then there is a unique attractive point for the system  \eqref{flotgradient3ru} restricted to the sphere $\|Y\| = 1$, namely: $(\bar v_1,\bar v_2) = \left(\frac{\sqrt2}{2},\frac{\sqrt2}{2}\right)$.\\
If $\bar{\chi}< \chi< \chi_3^{2}$, there are two symmetric attractive points $\left(\bar v_1(\chi),\bar v_2(\chi)\right)$ and $\left(\bar v_2(\chi),\bar v_1(\chi)\right)$. Moreover we have $\left(\bar v_1(\chi),\bar v_2(\chi)\right) \rightarrow \left(0,\frac{\sqrt{3}}{2}\right)$ when $\chi \rightarrow \chi_3^{2}$.
\end{Prop}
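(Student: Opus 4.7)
The plan is to reduce the problem to a one-dimensional flow on an invariant arc. Since the center of mass satisfies $\dot c_X = 0$ in $X$-coordinates, we may assume $c_X = 0$, which forces $c_y = \sum_i Y_i \equiv 0$ in rescaled coordinates. A direct computation shows that on the hyperplane $\{c_y = 0\}$ one has $\|Y\|^2 = \tfrac{2}{3}(v_1^2 + v_1 v_2 + v_2^2)$, so the sphere $\{\|Y\|=1\}$ becomes the arc
\[
\Gamma \;=\; \Bigl\{(v_1,v_2)\in(0,\infty)^2 \,:\, v_1^2 + v_1 v_2 + v_2^2 = \tfrac{3}{2}\Bigr\}.
\]
Moreover, a short computation gives $\tfrac{d}{d\tau}\|Y\|^2 = 2\alpha(\|Y\|^2 - 1)$, so $\Gamma$ is invariant under \eqref{flotgradient3ru}. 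Because $\Gamma$ is a smooth compact (up to endpoints) one-dimensional curve, the dynamics on it has no closed orbits, and every bounded trajectory converges to an equilibrium; it therefore suffices to list the equilibria and determine their stability.

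\textbf{Equilibria.} Taking the difference $\dot v_1 - \dot v_2 = 0$ of the two equations of \eqref{flotgradient3ru} factorizes cleanly as $(v_1 - v_2)\bigl[\,\alpha - \tfrac{3(1 - \chi/2)}{v_1 v_2}\,\bigr] = 0$. The symmetric branch $v_1 = v_2$ collapses, after substitution, to the unique positive solution $(\sqrt{2}/2,\sqrt{2}/2)$. In the asymmetric branch, $p := v_1 v_2 = \tfrac{3(1-\chi/2)}{\alpha}$, and the sum $\dot v_1 + \dot v_2 = 0$ yields $s := v_1 + v_2 = \sqrt{3\chi/(4\alpha)}$. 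The discriminant condition $s^2 \geq 4p$ rearranges to $27\chi \geq 48$, i.e.\ $\chi \geq \bar\chi = 16/9$, and gives rise for $\chi > \bar\chi$ to two roots exchanged by $(v_1,v_2)\leftrightarrow(v_2,v_1)$. A short arithmetic check using $\alpha = (3\chi-4)/2$ shows that $s^2 - p = 3/2$, confirming that all these equilibria lie on $\Gamma$.

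\textbf{Linear stability and 1D flow argument.} At the symmetric point, the Jacobian of the vector field commutes with the reflection $(v_1,v_2) \leftrightarrow (v_2,v_1)$, so it is diagonal in the basis $(1,1)$, $(1,-1)$. The vector $(1,1)$ is normal to $\Gamma$ (since $\nabla(v_1^2 + v_1 v_2 + v_2^2) \propto (1,1)$ at the symmetric point) and does not influence on-manifold dynamics; the tangential eigenvalue is $\lambda_t = -8 + 9\chi/2$, negative precisely when $\chi < \bar\chi$. For $\chi_3 < \chi < \bar\chi$, no asymmetric equilibrium exists, and the singular term $2(1-\chi/2)/v_i$ (positive since $\chi < \chi_3^2 = 2$) repels trajectories from the endpoints of $\Gamma$, so the symmetric point is the unique global attractor. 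For $\bar\chi < \chi < \chi_3^2$, the same boundary-repulsion property combined with the instability of the symmetric point and the total parity/index count on the connected arc $\Gamma$ forces the two asymmetric equilibria to be attractors. Finally, as $\chi \to \chi_3^2 = 2$ one has $p = 3(1-\chi/2)/\alpha \to 0$ and $s \to \sqrt{3\chi/(4\alpha)}$, so $(\bar v_1(\chi),\bar v_2(\chi))$ tends to the corresponding boundary point of $\Gamma$ where one coordinate vanishes.

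\textbf{Main obstacle.} The delicate step is the critical case $\chi = \bar\chi$, where $\lambda_t = 0$ and linearization is inconclusive. To close the statement ``unique attractive point'' at $\chi = \bar\chi$, one must perform a higher-order analysis on $\Gamma$ (center manifold reduction or direct normal-form expansion of the tangential vector field to cubic order) and verify that the pitchfork bifurcation is supercritical, i.e.\ that the cubic coefficient is stabilizing. This is a one-variable calculation, tedious but bounded in scope; outside the critical value, the argument is purely algebraic plus a one-dimensional phase-line count.
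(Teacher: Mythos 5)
Your route is essentially the paper's: reduce to the invariant arc $v_1^2+v_1v_2+v_2^2=\tfrac32$, obtain the equilibria from $\dot v_1-\dot v_2=0$ (which gives $v_1v_2=3(1-\chi/2)/\alpha$ on the asymmetric branch, exactly the relation used in the paper), and conclude by one-dimensional phase-line considerations. Your computations check out ($s=\sqrt{3\chi/(4\alpha)}$, $s^2-p=\tfrac32$, existence of the asymmetric pair iff $\chi\ge\bar\chi$, tangential eigenvalue $\lambda_t=\tfrac{9\chi}{2}-8$ at the symmetric point), and in fact you supply more of the stability argument than the paper does, whose proof stops at ``it is attractive if it is unique''; your limit point $(0,\sqrt{3/2})$ agrees with the paper's explicit formulas (the $(0,\tfrac{\sqrt3}{2})$ in the statement is a typo).

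The only flaw is your ``main obstacle'': the borderline case $\chi=\bar\chi$ does not require a center-manifold or cubic normal-form computation, and as written your proof leaves that case open even though you already have everything needed to close it. At $\chi=\bar\chi$ the discriminant $s^2-4p$ vanishes, so the would-be asymmetric roots coincide with $(\tfrac{\sqrt2}{2},\tfrac{\sqrt2}{2})$; hence for all $\chi_3<\chi\le\bar\chi$ the symmetric point is the \emph{unique} equilibrium on the open arc. Your own argument for $\chi<\bar\chi$ then applies verbatim: the tangential field is continuous and nonvanishing on each of the two subarcs, points inward near the endpoints because the singular term $2(1-\chi/2)/v_i$ dominates, and therefore points toward the symmetric point on both sides, giving global attraction with no use of the (vanishing) linearization. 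This uniqueness-plus-boundary-repulsion argument is precisely the paper's implicit reasoning, so the degenerate linearization at $\chi=\bar\chi$ is irrelevant to the claim and the supercriticality of the pitchfork need not be verified.
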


\begin{proof}[Proof of Proposition \ref{liouville1}]
The condition $\|Y\| = 1$ rewrites
\begin{equation}{\label{ligne}}
v_1^2+v_2^2+v_1v_2=\frac32\, .
\end{equation}
We seek stationary points of \eqref{flotgradient3ru} on this curve. Clearly 
$(\bar v_1,\bar v_2) = \left(\frac{\sqrt2}{2},\frac{\sqrt2}{2}\right)$ is one of them. It is attractive if it is
unique.
More generally, the equation of the stationary points of \eqref{flotgradient3ru}  reads:
$$
0=\left(\frac{3}{v_2}-\frac{3}{v_1}\right)\left(1-2\chi h_3\right) +\alpha \left(v_2-v_1\right) .
$$
We assume $v_2 > v_1$ w.l.o.g. We find,
$$
v_1v_2=3\frac{1-2\chi h_3}{\alpha}>0.
$$
In the case $\chi\geq =\bar \chi$, this equations possesses an extra solution, given by
\begin{align*}
\bar v_1(\chi)&=\frac12 \left( \sqrt{\frac32\left(1+2\frac{1-2\chi h_3}{\alpha}\right)} -
\sqrt{\frac32\left(1-6\frac{1-2\chi h_3}{\alpha}\right)}\right)\, .
\\
\bar v_2(\chi)&=\frac12 \left( \sqrt{\frac32\left(1+2\frac{1-2\chi h_3}{\alpha}\right)}
+\sqrt{\frac32\left(1-6\frac{1-2\chi h_3}{\alpha}\right)}\right)\, .
\end{align*}
\end{proof}

\begin{figure}
\includegraphics[width = 0.48\linewidth]{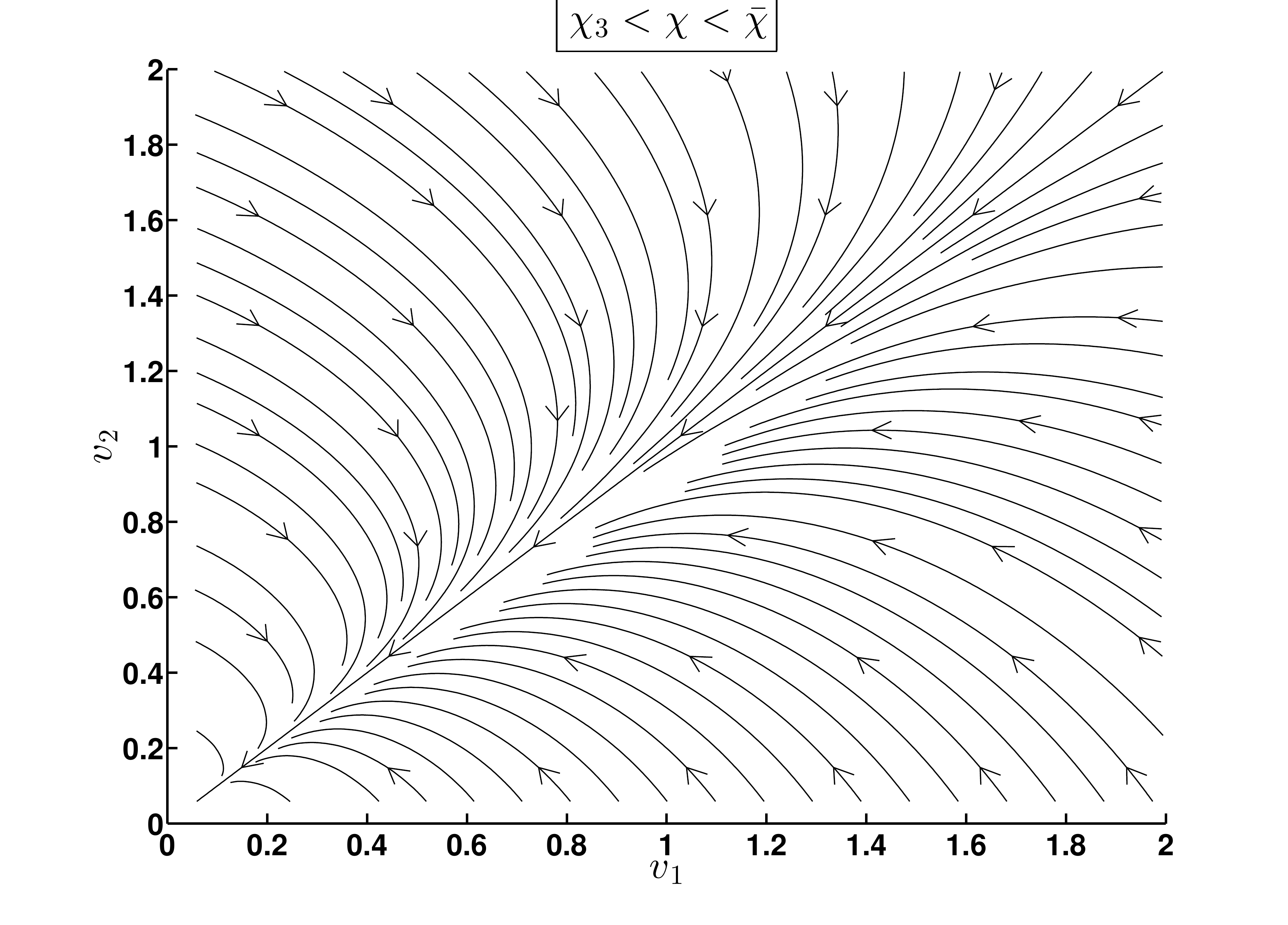}\,
\includegraphics[width = 0.48\linewidth]{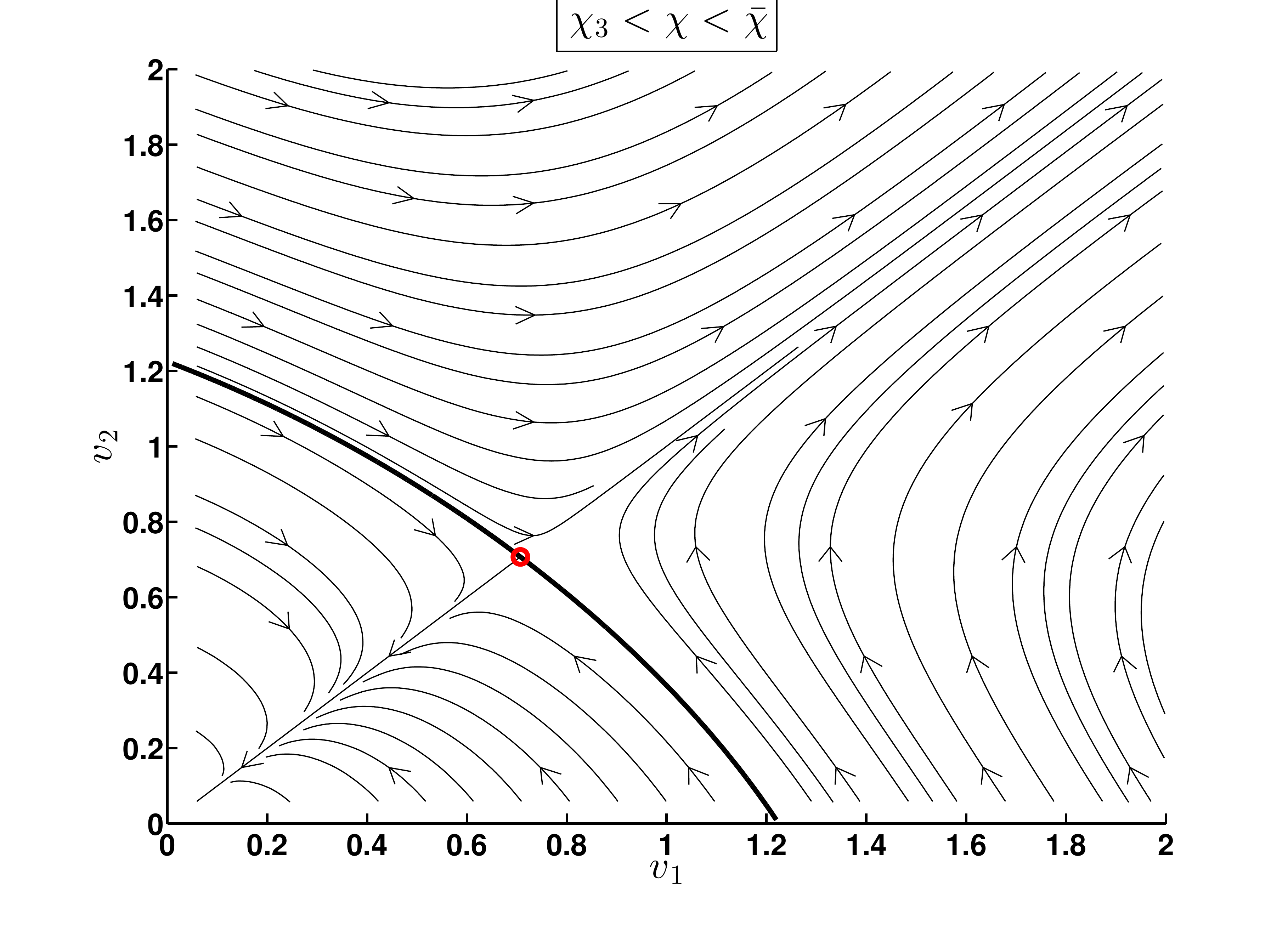}\\
\includegraphics[width = 0.48\linewidth]{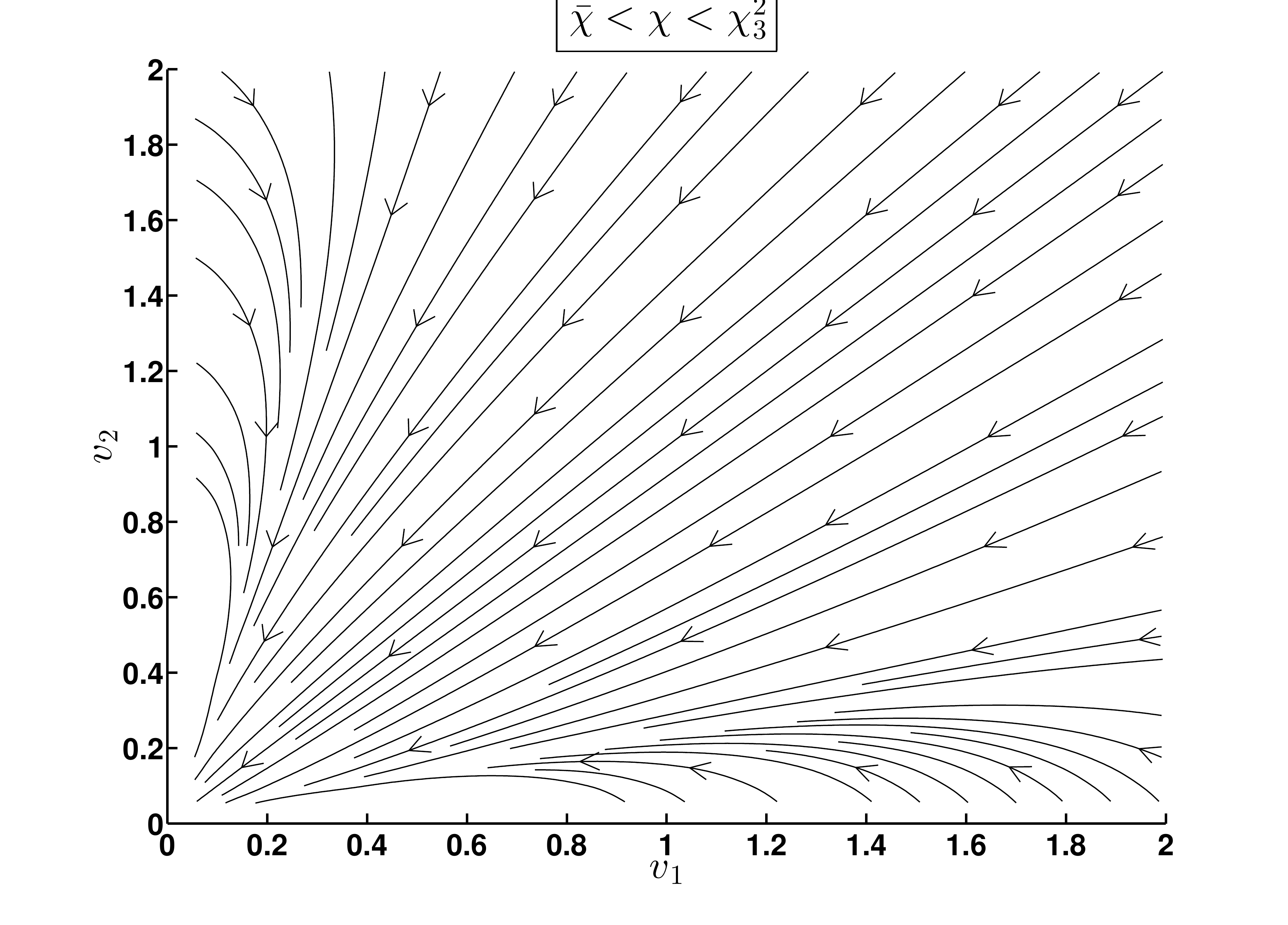}\,
\includegraphics[width = 0.48\linewidth]{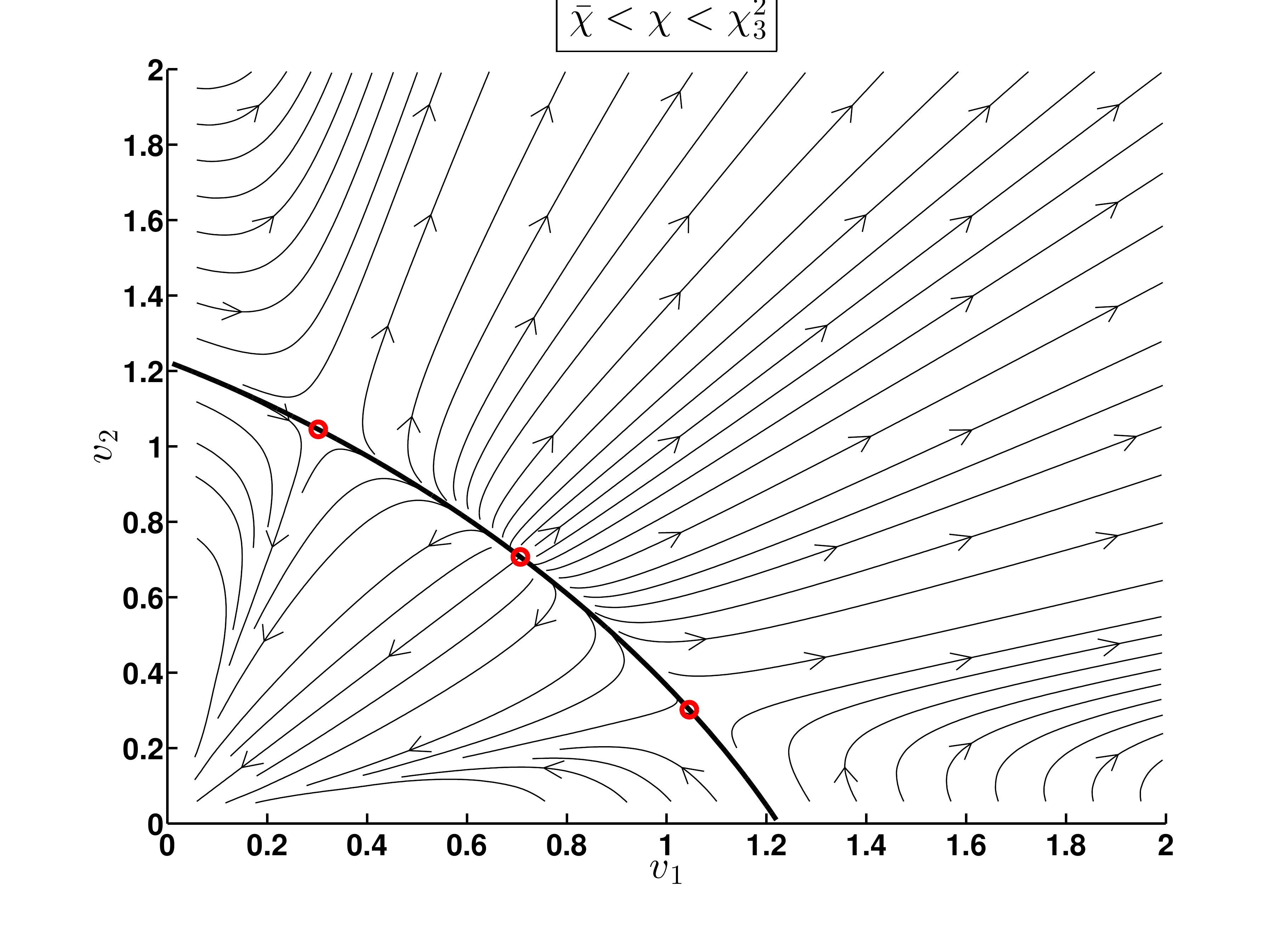}
\caption{Illustration of the dynamics of the three-particles system in the two possible cases: (Top) $\chi_3< \chi< \bar{\chi}$, and (Bottom) $\bar{\chi}< \chi< \chi_3^{2}$. The Left picture shows the dynamics of the original system  \eqref{flotgradient3u}, and the Right picture shows the dynamics of the rescaled system \eqref{flotgradient3ru}. In the former, we clearly see that the three particles collapse simultaneously since the relative distances $v_1$ and $v_2$ both converge to zero. In the latter, the dynamics is restricted to the plain curve defined by \eqref{ligne}. The stationary points are plotted in red circles.}
\label{figsc3p3}
\end{figure}

We are now in position to state a Liouville rigidity theorem for \eqref{flotgradient3u}.
\begin{Thm}[Liouville Theorem]\label{liouville2}
In the case  $\chi_3< \chi\leq \bar{\chi}$, the rescaled solution is  the translation of a unique solution defined for $t\in \R$, except in the trivial symmetric case.
\begin{enumerate}
\item There exists $V$ solution of  $\eqref{flotgradient3ru}$ satisfying $V_1^2+V_2^2+V_1V_2=\frac32$ , defined on $\R$, being such that  $\lim_{\tau \to -\infty }\left(V_1\left(\tau \right),V_2\left( \tau \right)\right)=\left(0,\sqrt{\frac32}\right)$ and $\lim_{\tau \to +\infty }\left(V_1\left(\tau\right),V_2\left(\tau\right)\right)=\left(\frac{\sqrt2}{2},\frac{\sqrt2}{2}\right)$. 
\item Let $\left(v_1,v_2\right)$ be a solution of \eqref{flotgradient3ru}, such that $v_2 > v_1 $, and satisfying
$v_1^2+v_2^2+v_1v_2=\frac32$. Then there exists $s \in \R $ such that:
$$ (\forall \tau>0) \quad v\left(\tau\right)= V \left(\tau+s\right).$$
\end{enumerate}
A similar result holds in the case  $\bar \chi < \chi <\chi_3^2$, but there are two possible branches of solutions. 
\begin{enumerate}
\item There exist two solutions $V^l$ and $V^r$, satisfying $V_1^2+V_2^2+V_1V_2=\frac32$, defined on $\R$, coming respectively from  
$\left(0,\sqrt{\frac32}\right)$ and $\left(\frac{\sqrt2}{2},\frac{\sqrt2}{2}\right) $ as $\tau \to -\infty$, and going both to the attractive point $ \left(\bar v_1(\chi),\bar v_2(\chi)\right)$ as $\tau \to +\infty$. 
\item 
Let $\left(v_1,v_2\right)$ be a solution of \eqref{flotgradient3ru}, such that $v_2 > v_1 $, and satisfying
$v_1^2+v_2^2+v_1v_2=\frac32$. Then there exists $s \in \R $ such that for any $\tau >0$: for any $v=\left(v_1,v_2\right)$, $v_2 \ge v_1 $, solution of $\eqref{flotgradient3ru}$ satisfying
$v_1^2+v_2^2+v_1v_2=\frac32$.
Then there exists $s \in \R $ such that:
$$ (\forall \tau>0) \quad v\left(\tau \right)= V^l \left(\tau+s\right)\, \quad \text{or}\quad (\forall \tau >0) \quad v\left(\tau \right)= V^r \left(\tau+s\right).$$
\end{enumerate}
\end{Thm}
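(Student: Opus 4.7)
The proof strategy is to reduce the planar system \eqref{flotgradient3ru} to a scalar autonomous ODE by exploiting the invariance of the curve $\mathcal{C}=\{v_1^2+v_2^2+v_1v_2=3/2\}$: this curve is the image under $X\mapsto (X_2-X_1,X_3-X_2)$ of the sphere $\|Y\|=1$, which is preserved by \eqref{flotgradient3ru} by construction of the rescaling. Thus the flow on $\mathcal{C}\cap\{v_2>v_1\}$ is one-dimensional. I would parametrize this open arc by a single variable $\theta$ ranging over an open interval $(\theta_-,\theta_+)$, with $\theta_-$ corresponding to the singular endpoint $(0,\sqrt{3/2})$ (two particles already collapsed) and $\theta_+$ to the symmetric point $(\sqrt{2}/2,\sqrt{2}/2)$ (which is excluded from the admissible set since $v_2>v_1$). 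The system then reduces to $\dot\theta=f(\theta)$ for a smooth $f$ on $(\theta_-,\theta_+)$.

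Next I would read off the zeros of $f$ from Proposition \ref{liouville1}. In the regime $\chi_3<\chi\leq\bar\chi$, the only equilibrium in the closure of the arc besides the singular boundary $\theta_-$ is the symmetric point $\theta_+$, which is attractive; hence $f>0$ throughout $(\theta_-,\theta_+)$. In the regime $\bar\chi<\chi<\chi_3^2$ an additional equilibrium $\bar\theta\in(\theta_-,\theta_+)$ appears, attractive, while $\theta_+$ is unstable; thus $f>0$ on $(\theta_-,\bar\theta)$ and $f<0$ on $(\bar\theta,\theta_+)$. Proposition \ref{estimation3} guarantees that any solution of \eqref{flotgradient3ru} on $\mathcal{C}\cap\{v_2>v_1\}$ stays in a compact subset of the arc, so by monotonicity every trajectory is defined for all $\tau\geq 0$ and converges to an equilibrium as $\tau\to+\infty$.

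To construct the canonical orbits $V$, $V^l$, $V^r$, I would pick a base point in each open interval of monotonicity of $f$ and integrate both forward and backward in $\tau$. Forward convergence to the downstream attractor is immediate from monotonicity. For backward convergence I would argue as follows: in the case of the unstable interior point $\theta_+$ (giving $V^r$), the standard unstable manifold theorem applied to the scalar ODE $\dot\theta=f(\theta)$ at a non-degenerate zero yields a unique orbit emanating from $\theta_+$ as $\tau\to-\infty$, which can be extended globally. In the case of the singular endpoint $\theta_-$ (giving $V$ and $V^l$), the ODE \eqref{flotgradient3ru} blows up like $1/v_1$, so $f(\theta)\to+\infty$ as $\theta\downarrow\theta_-$; the backward trajectory therefore reaches $\theta_-$ in infinite rescaled time, which is exactly the statement $\lim_{\tau\to-\infty}V(\tau)=(0,\sqrt{3/2})$. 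The time-translation statement is then a consequence of uniqueness for the scalar ODE: given any solution $v$, its parameter $\theta(\tau)$ lies in the same monotonicity interval as one of the $V$'s, and solving $V(\cdot+s)=v(0)$ for $s$ determines the translation uniquely, after which $v(\tau)=V(\tau+s)$ for all admissible $\tau$.

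The main obstacle will be a careful treatment of the singular endpoint $\theta_-=(0,\sqrt{3/2})$, both to show that the backward trajectory actually attains this limit in infinite rescaled time (as opposed to reaching it in finite time and being undefined beyond) and to show that no admissible solution can cross into $v_1<0$ or emerge out of the singularity along a different branch. Concretely, I expect to need a one-sided asymptotic expansion of $f$ near $\theta_-$, showing $f(\theta)\sim c/(\theta-\theta_-)$ for some $c>0$ that makes $\int_{\theta_-}^{\cdot}\frac{d\theta}{f(\theta)}$ diverge at the lower limit. Once this asymptotic is established, the rest of the argument is a routine application of monotone one-dimensional dynamics and ODE uniqueness.
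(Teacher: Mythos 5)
Your overall strategy is the same as the paper's: the paper's (very terse) proof also reduces to the one--dimensional flow on the invariant arc \eqref{ligne} above the diagonal, uses the classification of stationary points from Proposition \ref{liouville1}, asserts that a maximal orbit $V$ parametrizes the whole open arc, and concludes the translation identity $v(\tau)=V(\tau+s)$ by uniqueness of solutions. So your reduction to $\dot\theta=f(\theta)$, the sign analysis of $f$ in the two regimes, the construction of $V^r$ from the symmetric equilibrium via its non-degenerate unstable direction, and the final uniqueness argument all track the paper's route, only in more detail.

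However, your treatment of the singular endpoint $(0,\sqrt{3/2})$ — which you correctly identify as the delicate point — contains a genuine error, and it is exactly the point the paper glosses over. Near that corner one has, with $2\chi h_3=\chi/\chi_3^2$,
\[
\dot v_1=\left(\frac{2}{v_1}-\frac{1}{v_2}\right)\left(1-\frac{\chi}{\chi_3^2}\right)-\frac{2\chi h_3}{v_1+v_2}+\alpha v_1\;\sim\;\frac{2\left(1-\chi/\chi_3^2\right)}{v_1}\longrightarrow+\infty ,
\]
so in your parametrization $f(\theta)\sim c/(\theta-\theta_-)$ with $c>0$, as you expect. But then $\int_{\theta_-}\frac{d\theta}{f(\theta)}\sim\int_{\theta_-}\frac{\theta-\theta_-}{c}\,d\theta$ \emph{converges} at the lower limit; you claim it diverges. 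Consequently the backward orbit reaches the corner in \emph{finite} backward time (indeed $v_1(\tau)\sim\sqrt{2c(\tau-\tau_*)}$ for some finite $\tau_*$), not in infinite time: the corner is a singular point of the on-curve vector field, not a rest point, and no orbit can have it as a $\tau\to-\infty$ limit. So your construction of $V$ (and of $V^l$ in the regime $\bar\chi<\chi<\chi_3^2$) as a solution ``defined on $\R$ with $\lim_{\tau\to-\infty}V=(0,\sqrt{3/2})$'' does not follow from your argument — with the asymptotics you yourself derive, that statement is false, and to get infinite backward time you would need $f$ to vanish at $\theta_-$, which it does not. What does survive, and what the conclusion actually needs, is weaker: by monotonicity, finite-time arrival at the corner backward in time, and convergence to the attractor forward in time, the maximal orbit sweeps out the entire open arc $\{v_2>v_1\}\cap\eqref{ligne}$; hence any solution defined on $[0,+\infty)$ on that arc satisfies $v(0)=V(s)$ for some $s$ in the (half-line) domain of $V$, and forward uniqueness gives $v(\tau)=V(\tau+s)$ for $\tau>0$. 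The branch $V^r$, emanating from the genuine hyperbolic equilibrium $\left(\frac{\sqrt2}{2},\frac{\sqrt2}{2}\right)$, is the only one for which your ``infinite backward time'' reasoning is valid.
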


\begin{proof}[Proof of Theorem \ref{liouville2}]

Let  $V=\left(V_1,V_2\right)$ be a maximal solution of \eqref{flotgradient3ru}.
It is defined on $\R$  and satisfies $\lim_{\tau \to-\infty}V(\tau)=\left(0,\sqrt{\frac32}\right)$ and $\lim_{\tau \to+\infty} V(\tau) =\left(\frac{\sqrt2}{2},\frac{\sqrt2}{2}\right)$. Thus $V$ parametrizes the curve \eqref{ligne} above the diagonal: $\{v_2>v_1\}$. 
Consequently for any $v$ solution of $\eqref{flotgradient3ru}$, defined on $[0,+\infty)$ and satisfying
$\eqref{ligne}$, there exists  $s$ such that $v(0)= V(s) $. By uniqueness of the solution, for all $\tau>0$, $v(\tau)= V(\tau+s)$.\\
We can do exactly the same construction in the case $\chi> \bar \chi$.
\end{proof}

We refer to Figure \ref{figsc3p3} for an illustration of these statements.

\begin{Rk}\label{rk:A5}
We can rewrite this theorem with respect to the degrees of freedom of the system. There are two degrees of freedom for the solution of \eqref{flotgradient3u}. After rescaling, the two degrees of freedom are: the blow-up time $T$, and the time shift from the solution $V$:
\begin{equation*}
\left(u_1(t),u_2(t)\right)=\sqrt{2\alpha(T- t)}\left(V_1(\tau(t)+s),V_2(\tau(t)+s)\right).
\end{equation*}
\end{Rk}

\subsubsection{Back to the initial problem}
We make a last important comment: the transition $\chi \lessgtr \bar \chi$ is a first step towards the understanding from the transition from $k+1$ to $k$ particles in the blow-up set as $\chi_N^{k+1}<\chi <\chi_N^k$ increases (here, $k = 2$). Indeed, as $\chi\approx \chi_N^{k+1}$ the blow-up profile is uniquely determined and symmetric. On the other hand, as $\chi\approx \chi_N^{k}$, there are two asymmetric profiles, depending on which particle (here, $X_1$ or $X_3$) contributes the least to the blow-up. As $\chi\to \chi_N^{k}$, the ratio of the asymptotic relative distances diverges, meaning that one of the two extremal particles is progressively ejected from the blow-up set. 

\subsection{Two particles collapse}
Secondly, we assume $\chi > \chi_3^{2}$. Let ($u_1,u_2$) be a solution of \eqref{flotgradient3u}. 
In this case, we expect the following statement (see Figure \ref{figsc2p3}):
\begin{enumerate}
\item If $u_2(0)>u_1(0)$ the blow-up involves $X_1$ and $X_2$ only.
\item If $u_1(0)>u_2(0)$ the blow-up involves $X_2$ and $X_3$ only.
\end{enumerate}
\begin{Rk}
The non generic case $u_1(0)=u_2(0)$ shows that, even if $\chi > \chi_3^{2}$, the blow-up can aggregate three particles, for symmetry reasons.  
\end{Rk}
We suppose without lost of generality that $u_2(0) > u_1(0)$. 
\subsubsection{Parabolic rescaling}
We perform the same parabolic rescaling as in Section \ref{sec:3particles}, except that we substitute $\alpha$ with $\bar{ \alpha } = -2\left(1-\frac{\chi}{\chi^2_{3}}\right)>0$.

Theorem \ref{estimationbornesuperieur} rewrites as follows.
\begin{Prop}\label{ecartcroit}
There exists $A>0$ such that for any $\tau> 0$:
\begin{enumerate}
\item $ \lim_{\tau \to +\infty} v_1(\tau) = 1$.
\item $\frac{1}{A\sqrt{2\bar{\alpha}T}}e^{\alpha \tau}\leq v_2$.
\end{enumerate}
\end{Prop}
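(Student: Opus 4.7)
The plan is to first identify $\{X_1,X_2\}$ as a strong two-particle blow-up set, then transfer the bounds of Theorem \ref{estimationbornesuperieur} to the rescaled variables $(v_1,v_2)$, and finally pin down the limit of $v_1$ via a scalar ODE argument on the rescaled variance.

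\emph{Step 1: the pair $\{X_1,X_2\}$ strongly blows up.} Subtracting the two equations of \eqref{flotgradient3u} and using $h_3=1/4$, one computes
\[
\frac{d}{dt}(u_2 - u_1) \;=\; 3\left(\tfrac{\chi}{2}-1\right) \frac{u_2-u_1}{u_1 u_2}\, .
\]
Since $\chi>\chi_3^2=2$, the gap $u_2 - u_1$ is non-decreasing, hence bounded below by $u_2(0)-u_1(0)>0$. Combined with Claim \ref{claim:particulesbornees} (finite-time blow-up with bounded positions), this forces $u_1(t)\to 0$ as $t\to T$ while $u_2$ stays bounded away from zero. Thus $\In=\{1,2\}$ is a weak blow-up set of two particles, hence, by Proposition \ref{prop:weakisstrong}, a strong one; the blow-up point is $\bar X = \lim_{t\to T}\tfrac{X_1(t)+X_2(t)}{2}$.

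\emph{Step 2: estimate (2).} Applying Theorem \ref{estimationbornesuperieur} to the strong blow-up set $\In=\{1,2\}$ with $k=2$ and rescaling parameter $\bar\alpha = -2(1-\chi/\chi_3^2)>0$ provides $A>0$ such that $|Y_i|\leq A$ for $i\in\In$, $\mathcal{H}_{\In\Out,2}(\tau)\leq A^2 e^{-2\bar\alpha \tau}$, and $|Y_3|\geq \tfrac{1}{A\sqrt{2\bar\alpha T}}e^{\bar\alpha\tau}$. Then $v_2 = Y_3-Y_2 \geq |Y_3|-A$, and enlarging $A$ to absorb the additive constant for large $\tau$ yields statement (2).

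\emph{Step 3: estimate (1).} I would use Lemma \ref{evolutionsecondmomentetHr} with $q=1$, $p=2$, noting that $P^2_{1,2} = v_1^2/2$ and $\alpha_{1,2}=1-\chi/2=-\bar\alpha/2$. Setting $\psi(\tau) := P^2_{1,2}(\tau)-\tfrac12$, the evolution inequality reduces to the perturbed scalar linear ODE
\[
\dot\psi \;=\; 2\bar\alpha\,\psi + \epsilon(\tau), \qquad |\epsilon(\tau)|\leq C e^{-\bar\alpha\tau},
\]
the control of the forcing following from Step 2's bound on $\mathcal{H}_{\In\Out,2}$. The unique bounded solution is
\[
\psi(\tau) = -\int_\tau^{+\infty} e^{-2\bar\alpha(s-\tau)}\epsilon(s)\,ds\, ,
\]
which tends to $0$ as $\tau\to+\infty$. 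Hence $v_1^2/2 \to 1/2$, i.e.\ $v_1(\tau)\to 1$ since $v_1>0$.

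\emph{Main obstacle.} The delicate point is Step 3: the equilibrium $P^2_{1,2}=1/2$ of the effective one-dimensional rescaled dynamics is \emph{unstable} forward in rescaled time, so no direct Lyapunov or contraction argument will close the estimate. The convergence has to be selected by combining two ingredients: the a priori uniform boundedness of $v_1$ from Theorem \ref{estimationbornesuperieur}, and the exponential decay of the outer interaction $\mathcal{H}_{\In\Out,2}$ coming from the ejection of $Y_3$. Together, these force the trajectory onto the unique bounded orbit of the perturbed linear ODE, which automatically converges to the unstable fixed point. Steps 1 and 2 are comparatively soft: Step 1 is a direct monotonicity argument on the relative distances, and Step 2 is a routine transfer of the existing rigidity bounds via the parabolic rescaling.
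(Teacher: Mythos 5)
Your proposal is correct in substance, but it takes a genuinely different and much heavier route than the paper. The paper argues entirely on the non-rescaled system \eqref{flotgradient3u}: item (2) follows from the monotonicity of $u_2-u_1$ (the same computation as your Step 1), which gives $u_2(t)\geq u_2(0)-u_1(0)$ and hence the exponential lower bound on $v_2$ after dividing by $R(t)=\sqrt{2\bar\alpha(T-t)}$ --- no appeal to Theorem \ref{estimationbornesuperieur} is needed; and item (1) follows from the single identity $2u_1\dot u_1=-2\bar\alpha+\frac{u_1}{u_2}\bar\alpha-\frac{2u_1\chi h_3}{u_1+u_2}$, which shows that $u_1^2$ decreases, vanishes at $T$, and satisfies $\frac{d}{dt}u_1^2\to-2\bar\alpha$, whence $u_1^2\sim 2\bar\alpha(T-t)$, i.e.\ $v_1\to1$. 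You instead import the general machinery (Proposition \ref{prop:weakisstrong}, Theorem \ref{estimationbornesuperieur}, Lemma \ref{evolutionsecondmomentetHr}) and then select the unique bounded orbit of the perturbed linear ODE for $\psi=P^2_{1,2}-\tfrac12$ at an equilibrium that is unstable forward in rescaled time; this is a valid and rather elegant way to extract the precise limit $v_1\to 1$ from mere two-sided bounds plus the exponential decay of the outer interaction, and you correctly identify that this selection is the delicate point. What the paper's route buys is self-containedness and unrestricted range of $\chi$: the results you cite are stated under the standing hypothesis $\chi_N^k<\chi<\chi_N^{k-1}$, i.e.\ $2<\chi<4$ for $N=3$, $k=2$, whereas the appendix only assumes $\chi>\chi_3^2$; for $\chi\geq 4$ you would have to observe (true, but it should be said) that the induction steps behind Proposition \ref{prop:weakisstrong} and Theorem \ref{estimationbornesuperieur} are vacuous when $k=2$, so the upper bound on $\chi$ is not actually used. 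Two smaller points: in Step 1 the blow-up criterion only gives $\liminf_{t\to T^-}u_1=0$, so the convergence $u_1\to0$ should be asserted only after invoking Proposition \ref{prop:weakisstrong}; and the asymptotics $\Pi^2_{\In}(t)\sim2\alpha(T-t)$ established inside the proof of Estimate 1 of Theorem \ref{estimationbornesuperieur} already yields item (1) when $k=2$, since $\Pi^2_{\In}=u_1^2/2$ --- so your Step 3 re-derives from the weaker stated bounds something contained in the cited proof; this is not circular, merely redundant.
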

\begin{proof}
We have $\liminf_{t\to T^-}u_1=0$.
We begin with the third estimate, namely: $u_2$ is bounded from below. The equation \eqref{flotgradient3u} gives
\begin{align*}
\dot u_2-\dot u_1 =\left(\frac{3}{u_2} -\frac{3}{u_1}\right) \left(1 - 2\chi h_3 \right)=\left(\frac{3}{u_2} -\frac{3}{u_1}\right) \left(1 - \frac{\chi}{\chi_3^{2}} \right).
\end{align*}
Since $u_2(0)> u_1(0)$, and $\left(1 - \frac{\chi}{\chi_3^{2}} \right)\leq 0$,  we deduce that $u_2-u_1$ increases. In particular, for all $t\in[0,T)$:
\begin{equation}
u_2(t)\geq u_2(0)- u_1(0)+u_1(t)\geq  u_2(0)- u_1(0).
\end{equation}
Taking $A\geq \frac{1}{u_2(0)- u_1(0)}$ proves item (ii).

Concerning the first estimate, we start from the non-rescaled equation: 
\begin{align*}
\dot u_1 = \frac{1}{u_1} 2\left(1-2\chi h_3\right) -\frac{1}{u_2} \left(1-2\chi h_3\right) -\frac{2\chi h_3}{u_1+u_2},
\end{align*}
Since $u_1\leq u_2$ we get
\begin{equation}\label{labelpourri}
2 u_1 \dot u_1 = -2 \bar{\alpha } +\frac{u_1}{u_2} \bar{\alpha } -\frac{2 u_1 \chi h_3}{u_1+u_2} \leq - \bar{\alpha }.
\end{equation}
Thus $u_1^2$ decreases, and $ \lim_{ t \rightarrow T} u_1(t) = 0$. Since $u_2$ is bounded from below we deduce that $\frac d{dt} u_1(t)^2 \sim - 2 \bar{\alpha }$. Therefore, $u_1(t)^2 \sim  2 \bar{\alpha} (T-t) $. This concludes the proof of item (i). 
\end{proof}

We finally state a Liouville theorem for the case where two particles only collapse. 

\begin{figure}
\includegraphics[width = 0.48\linewidth]{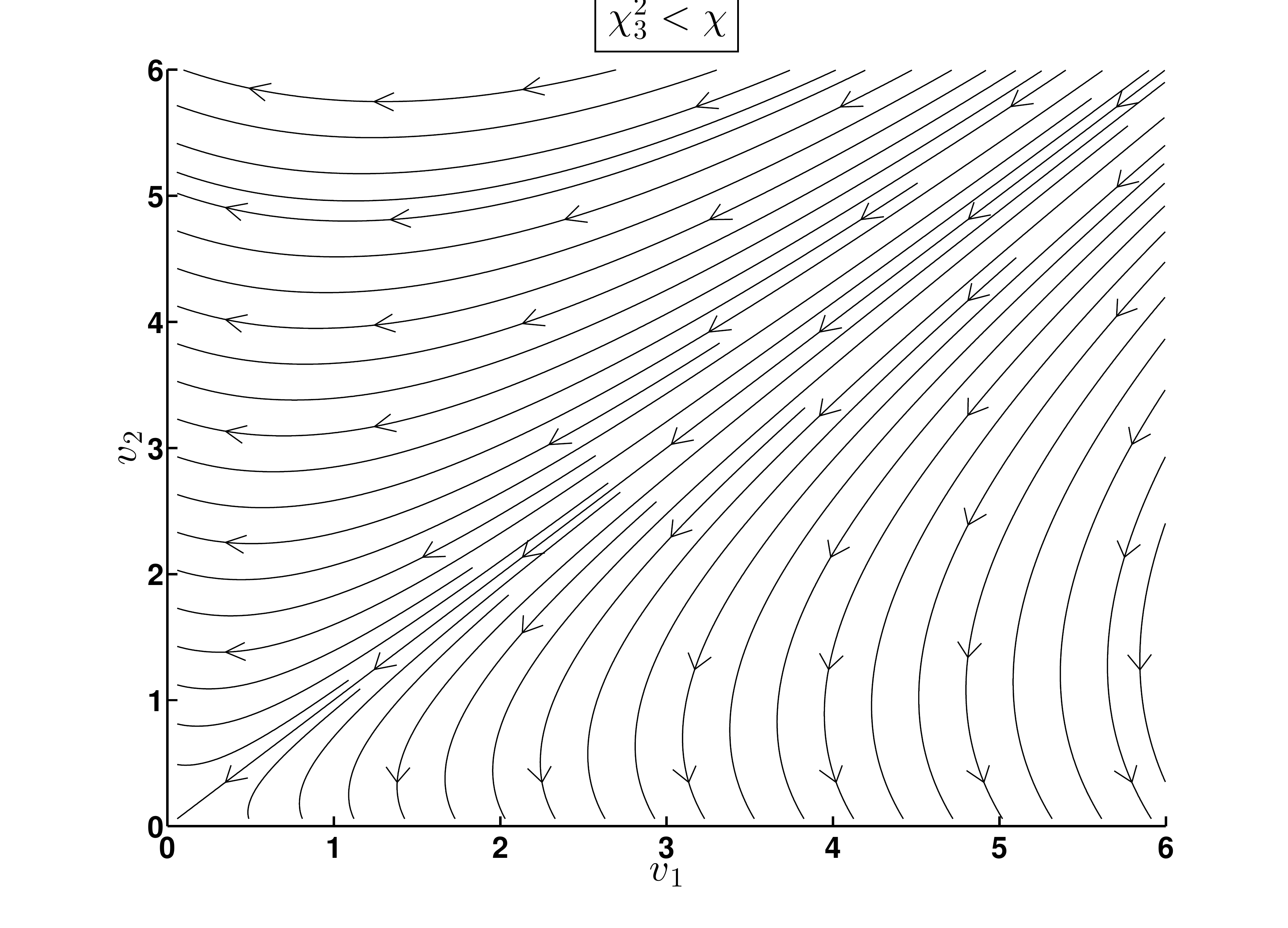}\,
\includegraphics[width = 0.48\linewidth]{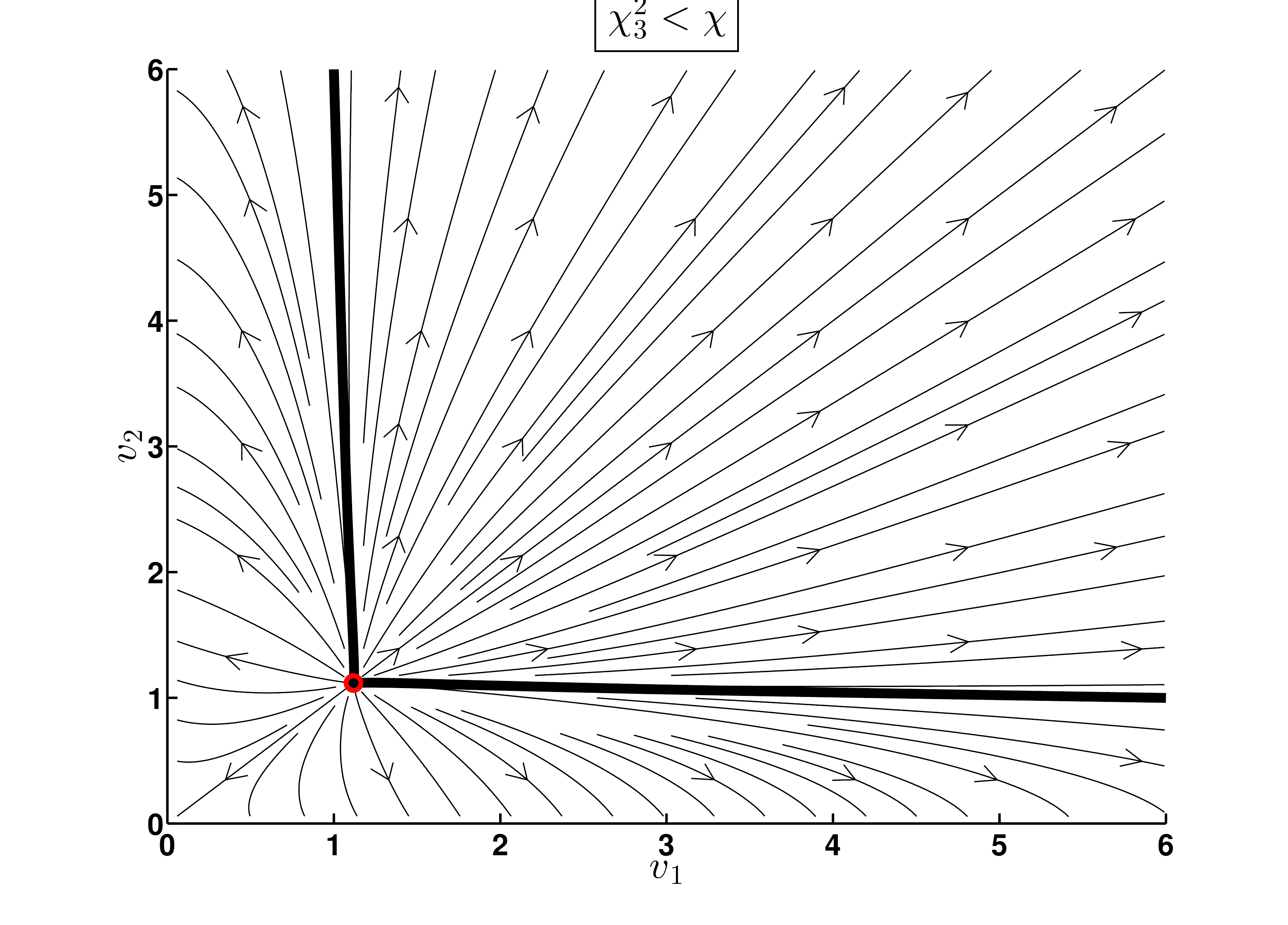}
\caption{Illustration of the dynamics of the three-particles system in the case: $\chi_3^2< \chi$. The Left picture shows the dynamics of the original system \eqref{flotgradient3u}, and the Right picture shows the dynamics of the rescaled system  \eqref{flotgradient3ru}. In the former, we clearly see that only two particles collapse simultaneously, except in the symmetric case $v_1 = v_2$. In the latter, we exhibit the two possible branches of infinite solutions that come  from the diagonal as $\tau\to -\infty$.}
\label{figsc2p3}
\end{figure}

\begin{Thm}[Liouville Theorem]
There exists $\bar V = \left(\bar V_1,\bar V_2\right)$, defined on $\R$, solution of \eqref{flotgradient3ru} such that:
if $v =\left(v_1,v_2\right)$ is a  solution of 
$\eqref{flotgradient3ru}$ defined on $\left[0,+\infty\right)$, satisfying $ v_2 > v_1 $, and verifying the conditions of Proposition \ref{ecartcroit}, then there exists $s \ge 0 $ such that 
$ v \left(\tau \right) = \bar V \left(\tau+s\right)$.
\end{Thm}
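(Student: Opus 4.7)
The plan is to mirror the proof of Theorem \ref{liouville2}: construct a canonical reference solution $\bar V$ defined on all of $\R$ by backward extension of any fixed admissible solution, then invoke ODE uniqueness to express an arbitrary admissible $v$ as a positive time-translate of $\bar V$.

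First, I would analyze the phase portrait of \eqref{flotgradient3ru} in the invariant half-plane $\Delta_+ = \{v_2 > v_1 > 0\}$. The diagonal $\{v_1 = v_2\}$ is invariant by the $X_1 \leftrightarrow X_3$ symmetry, and the reduced scalar equation on it, $\dot v = (1-3\chi h_3)/v + \bar\alpha v$, admits for $\chi > \chi_3^2$ the unique positive equilibrium $v^* = \sqrt{(3\chi h_3-1)/\bar\alpha}$. The identity
\begin{equation*}
\dot v_2 - \dot v_1 = (v_2 - v_1)\left[\frac{3(2\chi h_3 - 1)}{v_1 v_2} + \bar\alpha\right]
\end{equation*}
shows that $v_2 - v_1$ is strictly increasing along the flow for $\chi > 2$, and linearizing the 2D system at $(v^*, v^*)$ yields two strictly positive eigenvalues, so $(v^*,v^*)$ is a source.

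Second, I would construct $\bar V$ by extending an admissible solution backwards. The monotonicity of $v_2 - v_1$ prevents the backward orbit from reaching the diagonal in finite time, while Proposition \ref{ecartcroit} and the argument behind it produce uniform bounds preventing exit through $\{v_1 = 0\}$ backwards. Together these force the backward extension to be defined on all of $\R$, with $\alpha$-limit contained in the compact diagonal segment and, by absence of any other recurrent behaviour of the diagonal flow, equal to $\{(v^*,v^*)\}$. Fix such a backward-extended $\bar V$ with a canonical choice of time origin, say the unique $\tau$ at which $\bar V_2 - \bar V_1$ attains a chosen reference value close to $0$.

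Third, for an arbitrary admissible $v$ the same extension argument produces a solution on $\R$ with $\alpha$-limit $(v^*,v^*)$. The key step is to identify its orbit with that of $\bar V$. Admissible solutions are distinguished asymptotically by the prefactor $c$ in $v_2(\tau) \sim c\, e^{\bar\alpha \tau}$ as $\tau\to+\infty$, while a time-shift $\tau\mapsto\tau+s$ sends $c\mapsto c\,e^{\bar\alpha s}$. Modulo this time-shift the prefactor becomes unrestricted, so matching $c$ between $v$ and $\bar V$ by an appropriate $s$, combined with ODE uniqueness, yields $v(\tau) = \bar V(\tau+s)$. Taking the origin of $\bar V$ far enough back in its backward evolution ensures $s \geq 0$.

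The main obstacle is this last identification: one must show that the a priori one-parameter family of admissible orbits emanating from the source $(v^*,v^*)$ collapses, under the asymptotic normalization of Proposition \ref{ecartcroit}, to a single orbit up to the time-shift action. Concretely, this requires proving that the prefactor $c$ is the only invariant distinguishing admissible orbits at infinity, with no additional modulus surviving the asymptotic matching at $\tau\to+\infty$; this relies on a careful analysis of the linearization at $(v^*,v^*)$ together with the scaling invariance $(u,t)\mapsto(\lambda u,\lambda^2 t)$ of the non-rescaled system \eqref{flotgradient3u}, which in rescaled variables descends precisely to the time-translation action used above.
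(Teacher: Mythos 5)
Your proposal correctly identifies the crux but does not close it, and the mechanism you sketch for closing it is not the right one. In Theorem \ref{liouville2} the identification of an arbitrary solution with a time-translate of a reference solution works because the constraint $v_1^2+v_2^2+v_1v_2=\frac32$ reduces the dynamics to a one-dimensional curve, so above the diagonal there is literally a single orbit. In the present case ($\chi>\chi_3^2$) the rescaled system \eqref{flotgradient3ru} is genuinely two-dimensional: the orbits emanating backward from the diagonal equilibrium $(v^*,v^*)$ form a one-parameter family, and the whole content of the theorem is that exactly one of these orbits is compatible with the asymptotics of Proposition \ref{ecartcroit}. You flag this yourself as ``the main obstacle'', but the tools you invoke cannot resolve it: matching the prefactor $c$ in $v_2\sim c\,e^{\bar\alpha\tau}$ and then appealing to ODE uniqueness is not a proof, since uniqueness for the Cauchy problem requires agreement of two solutions at a finite time, not agreement of a leading-order asymptotic as $\tau\to+\infty$; and the linearization at the source $(v^*,v^*)$ only describes the behaviour as $\tau\to-\infty$, so it cannot tell you which of the orbits leaving that source share the prescribed behaviour at $\tau=+\infty$.

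The missing idea, which is how the paper proceeds, is to analyze the asymptotic state itself as a fixed point. Setting $(\xi,\eta)=\left(v_1-1,\tfrac{1}{v_2}\right)$ compactifies the regime $v_1\to1$, $v_2\to+\infty$ into the point $(0,0)$, where the linearized vector field has eigenvalues $2\bar\alpha>0$ and $-\bar\alpha<0$: a hyperbolic saddle whose stable manifold is one-dimensional. One defines $\bar V$ as the relevant branch of this stable manifold; it is a single orbit, defined on all of $\R$, leaving the diagonal value $\sqrt{(3\chi h_3-1)/\bar\alpha}$ as $\tau\to-\infty$. Any solution satisfying the conclusions of Proposition \ref{ecartcroit} converges to this saddle as $\tau\to+\infty$, hence lies on its stable manifold by the stable manifold theorem, and is therefore a time-translate of $\bar V$; your prefactor $c$ then merely parametrizes the shift $s$ along this single orbit. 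Without such an argument at $(1,+\infty)$, your third step asserts the theorem rather than proving it.
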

\begin{proof}
We perform the change of variables $\left(\xi,\eta \right)=\left( v_1-1,\frac{1}{v_2}\right)$. Linearizing \eqref{flotgradient3ru} near the critical point $ (1,0)$, we get,
$ \left(\dot{\xi},\dot{\eta}\right) = L\left(\xi,\eta\right) + f\left(\xi,\eta\right) $ with
$|f|\le \left(\xi^2+\eta^2\right)$ and 
\[  \begin{pmatrix}
\dot{\xi}\\
\dot{\eta}
\end{pmatrix}  = \begin{pmatrix}
2 \bar \alpha&-1\\
0 & -\bar \alpha
\end{pmatrix}  \begin{pmatrix}
 \xi \\
 \eta 
\end{pmatrix} + O \left(\left\| 
\xi\|^2 + \| 
\eta
\right\|^2\right) \, .\] 
We define $\bar V$ as the stable manifold of the hyperbolic point $(1,+\infty)$. It is 
defined on $\R$ with the boundary condition $\lim_{\tau \to-\infty}V_1=\lim_{\tau \to-\infty}V_2=\sqrt{\frac{3\chi h_3-1}{\bar \alpha}}$.
Since $\lim_{\tau \to \infty}v_1(\tau)=1$ and $\lim_{\tau \to \infty}v_2(\tau)=0$, the solution $v$ lies on the stable manifold $\bar V$. Hence, there exists  $s>0$ such that 
for any $\tau>0$:
$$
v(\tau)= \bar V (\tau+s).
$$ 
 
 \end{proof}
 
We refer to Figure \ref{figsc2p3} for an illustration of these statements.
 
%
\end{appendices}
\bibliographystyle{abbrv}
\bibliography{/Users/gallouet/Dropbox/cotutelle/recherche/these/bibliothese2}
\end{document}